\providecommand{\U}[1]{\protect\rule{.1in}{.1in}}
\newtheorem{theorem}{Theorem}[section]
\newtheorem{definition}[theorem]{Definition}
\newtheorem{assumption}[theorem]{Assumption}
\newtheorem{lemma}[theorem]{Lemma}
\newtheorem{remark}[theorem]{Remark}
\newenvironment{proof}[1][Proof]{\noindent \textbf{#1.} }{\  \rule{0.5em}{0.5em}}
\numberwithin{equation}{section}
\begin{document}

\title{Maximum principle for stochastic optimal control problem of finite state
forward-backward stochastic difference systems}
\author{Shaolin Ji\thanks{Zhongtai Securities Institute for Financial Studies,
Shandong University, Jinan, Shandong 250100, PR China. jsl@sdu.edu.cn.
Research supported by NSF (No. 11571203).}
\and Haodong Liu\thanks{Zhongtai Securities Institute for Financial Studies,
Shandong University, Jinan, Shandong 250100, PR China. (Corresponding
author).}}
\maketitle

\textbf{Abstract}: In this paper, we study the maximum principle for
stochastic optimal control problems of forward-backward stochastic difference
systems (FBS$\Delta$Ss) where the uncertainty is modeled by a discrete time,
finite state process, rather than white noises. Two types of FBS$\Delta$Ss are
investigated. The first one is described by a partially coupled
{forward-backward stochastic difference equation (}FBS$\Delta$E) and the
second one is described by a fully coupled FBS$\Delta$E. By adopting an
appropriate representation of the product rule and an appropriate formulation
of the {backward stochastic difference equation (}BS$\Delta$E), we deduce the
adjoint difference equation. Finally, the maximum principle for this optimal
control problem with the control domain being convex is established.

{\textbf{Keywords}: backward stochastic difference equations; forward-backward
stochastic difference equations; monotone condition; stochastic optimal
control; }maximum principle

\addcontentsline{toc}{section}{\hspace*{1.8em}Abstract}

\section{Introduction}

The Maximum Principle is one of the important approaches in solving the
optimal control problems. A lot of work has been done on the Maximum Principle
for stochastic system. See, for example, Bensoussan \cite{b82}, Bismut
\cite{b78}, Kushner \cite{k72}, Peng \cite{p90}. Peng also firstly studied one
kind of forward-backward stochastic control system (FBSCS) in \cite{p93} and
obtained the maximum principle for this kind of control system with control
domain being convex. The FBSCSs have wide applications in many fields. As the
stochastic differential recursive utility, which is a generalization of a
standard additive utility, can be regarded as a solution of a {backward
stochastic differential equation (}BSDE). The recursive utility optimization
problem can be described by a optimization problem for a FBSCS (see
\cite{ss99}). Besides, in the dynamic principal-agent problem with
unobservable states and actions, the principal's problem can be formulated as
a partial information optimal control problem of a FBSCS (see \cite{w09}). We
refer to \cite{dz99}, \cite{hjx18}, \cite{hjx18+}, \cite{lz01},\ \cite{sz13},
\cite{x95}, \cite{zs11}\ for other works on optimization problems for FBSCSs.

In this paper, we will discuss the Maximum Principle for optimal control of
discrete time systems described by forward-backward stochastic difference
equations (FBS$\Delta$Es). To the best of our knowledge, there are few results
on such optimization control problems. In fact, the discrete time control
systems are of great value in practice. For example, the digital control can
be formulated as discrete time control problems, where the sampled data is
obtained at discrete instants of time. Besides, the forward-backward
stochastic difference system (FBS$\Delta$S) can be used for modeling in
financial markets. For example, the solution to the backward stochastic
difference equation (BS$\Delta$E) can be used to construct time-consistent
nonlinear expectations (see \cite{ce10+}, \cite{ce11}) and be used for pricing
in the financial markets (see \cite{bcc15}). However, the formulation of
BS$\Delta$E is quite different from its continuous time counterpart. Many
works are devoted to the study of BS$\Delta$Es (see, e.g. \cite{bcc15},
\cite{ce10+}, \cite{ce11}, \cite{s10}). Based on the driving process, there
are mainly two types of formulations of BS$\Delta$Es. One is driving by a
finite state process which takes values from the basis vectors (as in
\cite{ce10+}) and the other is driving by a martingale with independent
increments (as in \cite{bcc15}). For the former framework, the researchers in
\cite{ce10+} obtained the discrete time version of martingale representation
theorem and establish the solvability result of BS$\Delta$E with the
uniqueness of $Z$ under a new kind of equivalence relation.\ Further works
about the applications of the finite state framework can be seen in
\cite{em11}, \cite{m10}, \cite{ly14}. In this paper, we adopt the first type
of formulation to investigate the optimization problems for FBS$\Delta$Ss.

In this paper, we study two stochastic optimal control problems. The Problem 1
involves a partially coupled FBS$\Delta$E (\ref{c5_eq_m_pc_state_eq}). In more
details, the coefficients $b$ and $\sigma$ of the forward equation do not
contain the solution $(Y,Z)$ of the backward equation. The state equation of
Problem 2 is described by a fully coupled FBS$\Delta$E (\ref{f_c_c_2_state_eq}).

The optimal control problem is to find the optimal control $u\in\mathcal{U}$,
such that the optimal control and the corresponding state trajectory can
minimize the cost functional $J\left(  u\left(  \cdot\right)  \right)  $. In
this paper, we assume the control domain is convex. By making the perturbation
of the optimal control at a fixed time point, we obtain the maximum principle
for problem 1 and 2.

To build the maximum principle, the key step is to find the adjoint variables
which can be applied to deduce the variational inequality. In \cite{lz15}, the
authors studied the maximum principle for a discrete time stochastic optimal
control problem in which the state equation is only governed by a forward
stochastic difference equation. By applying the Riesz representation theorem,
they explicitly obtained the adjoint variables and establish the maximum
principle. But to solve our problems, we need to construct the adjoint
difference equations since generally the adjoint variables can not be obtained
explicitly for our case. To construct the adjoint equations in our discrete
time framework, the techniques which are adopted for the continuous time
framework as in \cite{p90,p93} are not applicable. In this paper, we propose
two techniques to deduce the adjoint difference equations. The first one is
that we choose the following product rule:%
\[
\Delta\left\langle X_{t},Y_{t}\right\rangle =\left\langle X_{t+1},\Delta
Y_{t}\right\rangle +\left\langle \Delta X_{t},Y_{t}\right\rangle
\]
where $X_{t}$ (resp. $Y_{t}$) subjects to a forward (resp. backward)
stochastic difference equation. The second one is that the BS$\Delta$E should
be formulated as in (\ref{bsde_form1}). In other words, the generator $f$ of
the BS$\Delta$E (\ref{bsde_form1}) depends on time $t+1$. It is worth pointing
out that this kind of formulation is just the formulation of the adjoint
equations for stochastic optimal control problems (see \cite{lz15} for tha
classical case). Based on these two techniques, we can deduce the adjoint
difference equations. The readers may refer to Remark \ref{re-product-rule}
for more details.

Besides, the second difficulty is in the finite state space case. Since the
uniqueness of the variable $Z$ is not defined in the normal sense, the norm of
the variable should be redefined. In \cite{ce10+}, Cohen and Elliott defined a
seminorm of $Z_{t}$ through the term $Z_{t}M_{t+1}$. However, since the
It\^{o} isometry cannot work in the discrete time case and the martingale
difference process $M_{t}$ depends on the past, the relation between the norm
defined by $Z_{t}$ itself and the norm defined by $Z_{t}M_{t+1}$ is not clear.
So it makes estimating the diffusion term of the variation equations quite
difficult. In this paper, we propose a new definition of the norm for the
variable $Z_{t}$ in the diffusion term and prove the relation between this
norm of $Z_{t}$ and the seminorm defined by $Z_{t}M_{t+1}$. With this
relation, we can derive the estimation of the solutions to the stochastic
difference equations in the discrete time finite state space framework.

The remainder of this paper is organized as follows. In section 2, two types
of the controlled FBS$\Delta$Ss are formulated. We deduce the maximum
principle for the partially coupled controlled FBS$\Delta$S in section 3.
Finally, we establish the maximum principle for the fully coupled controlled
FBS$\Delta$S in section 4.

\section{Preliminaries and model formulation}

Let $T$ be a deterministic terminal time and $\mathcal{T}:=\left\{
0,1,...,T\right\}  $. Following \cite{ce10+}, we consider an underlying
discrete time, finite state process $W$ which takes values in the standard
basis vectors of $\mathbb{R}^{d}$, where $d$ is the number of states of the
process $W$. In more detail, for each $t\in\mathcal{T}$, $W_{t}\in\left\{
e_{1},e_{2},...e_{d}\right\}  $ where $e_{i}=\left(
0,0,...,0,1,0,...,0\right)  ^{\ast}\in\mathbb{R}^{d}$ and $\left[
\cdot\right]  ^{\ast}$ denotes vector transposition.

Consider a filtered probability space $\left(  \Omega,\mathcal{F},\left\{
\mathcal{F}_{t}\right\}  _{0\leq t\leq T},P\right)  $, where $\mathcal{F}_{t}$
is the completion of the $\sigma$-algebra generated by the process $W$ up to
time $t$ and $\mathcal{F}=\mathcal{F}_{T}$. Denote by $L\left(  \mathcal{F}%
_{t};\mathbb{R}^{n\times d}\right)  $ the set of all $\mathcal{F}_{t}-$adapted
random variable $X_{t}$ taking values in $\mathbb{R}^{n\times d}$ and by
$\mathcal{M}\left(  0,t;\mathbb{R}^{n\times d}\right)  $ the set of all
$\mathcal{F}_{t}$-adapted process $X$ taking values in $\mathbb{R}^{n\times
d}$ with the norm $\left\Vert \cdot\right\Vert $ defined by $\left\Vert
X\right\Vert =\left(  \mathbb{E}\left[  \sum_{s=0}^{t}\left\vert
X_{s}\right\vert ^{2}\right]  \right)  ^{\frac{1}{2}}$.

For simplicity, we suppose the process $W$ satisfies the following assumption.
Note that in the following, an inequality on a vector quantity is to hold componentwise.

\begin{assumption}
\label{general_assumption}For any $t\in\left\{  0,1,2,...,T-1\right\}  $, any
$\omega\in\Omega$, $\mathbb{E}\left[  W_{t+1}|\mathcal{F}_{t}\right]  \left(
\omega\right)  >0.$
\end{assumption}

The above assumption means that the probability of every possible path of $W$
on $\left\{  0,1,2,...,T\right\}  $ is strictly positive. Hence under this
assumption, the conception "$P-$almost surely" in the following statements can
be changed to "for every $\omega$". In fact, this assumption is given just for
simple statements. Without this assumption, the proof ideas are the same, but
the statements are more sophisticated. We set $\mathbb{E}\left[
W_{t+1}|\mathcal{F}_{t}\right]  =\left(  P_{t}^{1},P_{t}^{2},...,P_{t}%
^{N}\right)  ^{\ast}$.

Define%
\[
M_{t}=W_{t}-\mathbb{E}\left[  W_{t}|\mathcal{F}_{t-1}\right]  ,t=1,...,T.
\]
$M$ is a martingale difference process taking values in $\mathbb{R}^{d}$. The
following equivalence relations given in \cite{ce10+} will be used in the following.

\begin{definition}
For two $\mathcal{F}_{t}$-measurable random variables $Z_{t}$ and
$\widetilde{Z}_{t}$, we define $Z_{t}\thicksim_{M_{t+1}}\widetilde{Z}_{t}$, if
$Z_{t}M_{t+1}=\widetilde{Z}_{t}M_{t+1},$ $P-a.s.;$

For two adapted processes $Z$ and $\widetilde{Z}$, we define $Z\thicksim
_{M}\widetilde{Z}$, if $Z_{t}M_{t+1}=\widetilde{Z}_{t}M_{t+1},$ $P-a.s.$ for
any $t\in\left\{  0,1,2,...,T-1\right\}  .$
\end{definition}

For a $\mathcal{F}_{t}$-adapted process $X$, define the difference operator
$\Delta$ as $\Delta X_{t}=X_{t+1}-X_{t}$. Consider the following backward
stochastic difference equation (BS$\Delta$E):%

\begin{equation}
\left\{
\begin{array}
[c]{rcl}%
\Delta Y_{t} & = & -f\left(  \omega,t+1,Y_{t+1},Z_{t+1}\right)  +Z_{t}%
M_{t+1},\\
Y_{T} & = & \eta.
\end{array}
\right.  \label{bsde_form1}%
\end{equation}
where $\eta\in L\left(  \mathcal{F}_{T};\mathbb{R}^{n}\right)  $ and
$f:\Omega\times\left\{  1,2,...,T\right\}  \times\mathbb{R}^{n}\times
\mathbb{R}^{n\times d}\longmapsto\mathbb{R}^{n}$ is $\mathcal{F}_{t}$-adapted mapping.

\begin{assumption}
\label{generator_assumption}A1. For any $y\in\mathbb{R}^{n}$, $t\in\left\{
1,2,...,T-1\right\}  $,\ $\omega\in\Omega$, and $Z^{1},$ $Z^{2}\in
\mathcal{M}\left(  0,T-1;\mathbb{R}^{n\times d}\right)  $, if $Z^{1}%
\thicksim_{M}Z^{2}$, then%
\[
f\left(  \omega,t,y,Z_{t}^{1}\right)  =f\left(  \omega,t,y,Z_{t}^{2}\right)
.
\]

A2. The function $f\left(  t,y,z\right)  $ is independent of $z$ at $t=T$.
\end{assumption}

We have the following existence and uniqueness theorem of BS$\Delta$E
(\ref{bsde_form1}) in \cite{ji-liu}.

\begin{theorem}
\label{bsde_result_l}Suppose that Assumption (\ref{generator_assumption})
holds. Then for any terminal condition $\eta\in L\left(  \mathcal{F}%
_{T};\mathbb{R}^{n}\right)  $, BS$\Delta$E (\ref{bsde_form1}) has a unique
adapted solution $\left(  Y,Z\right)  $. Here the uniqueness for $Y$ is in the
sense of indistinguishability and for $Z$ is in the sense of $\thicksim_{M}$ equivalence.
\end{theorem}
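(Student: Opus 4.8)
The plan is to solve (\ref{bsde_form1}) by a direct backward recursion in $t$, with no fixed-point argument: since the generator is evaluated at $\left(t+1,Y_{t+1},Z_{t+1}\right)$ rather than at $\left(t,Y_t,Z_t\right)$, each step of the recursion determines $\left(Y_t,Z_t\right)$ explicitly from data already constructed, so no Lipschitz or monotonicity hypothesis on $f$ is needed; moreover, because $W$ takes finitely many values and $T<\infty$, every $\mathcal{F}_T$-measurable random variable is bounded and all integrability requirements are automatic. I would first rewrite (\ref{bsde_form1}) at a generic $t\in\left\{0,\dots,T-1\right\}$ in the one-step form
\[
Y_t+Z_tM_{t+1}=Y_{t+1}+f\left(\omega,t+1,Y_{t+1},Z_{t+1}\right)=:\xi_{t+1},
\]
where $\xi_{t+1}$ is $\mathcal{F}_{t+1}$-measurable. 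Conditioning on $\mathcal{F}_t$ and using $\mathbb{E}\left[M_{t+1}|\mathcal{F}_t\right]=0$ together with the $\mathcal{F}_t$-measurability of $Z_t$ forces $Y_t=\mathbb{E}\left[\xi_{t+1}|\mathcal{F}_t\right]$; then $\xi_{t+1}-Y_t$ is $\mathcal{F}_{t+1}$-measurable with vanishing $\mathcal{F}_t$-conditional expectation, and the discrete-time, finite-state martingale representation theorem of \cite{ce10+} produces an $\mathcal{F}_t$-measurable $Z_t$ with $Z_tM_{t+1}=\xi_{t+1}-Y_t$, unique up to $\thicksim_{M_{t+1}}$ (the residual freedom being exactly the null space of the map $Z\mapsto ZM_{t+1}$).

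For existence I would run this construction downward from $Y_T=\eta$. The first step $t=T-1$ is where Assumption A2 enters: since $f\left(\omega,T,\cdot,\cdot\right)$ does not depend on $z$, the quantity $\xi_T=\eta+f\left(\omega,T,\eta\right)$ is well defined although $Z_T$ is not. At a later step, having already produced $Y_{t+1}$ and some representative $Z_{t+1}$, I set $\xi_{t+1}=Y_{t+1}+f\left(\omega,t+1,Y_{t+1},Z_{t+1}\right)$, define $Y_t,Z_t$ as above, and check directly that $\Delta Y_t=-f\left(\omega,t+1,Y_{t+1},Z_{t+1}\right)+Z_tM_{t+1}$. The delicate point is that $Z_{t+1}$ was only chosen up to $\thicksim_{M_{t+2}}$; Assumption A1 is exactly what guarantees that $f\left(\omega,t+1,Y_{t+1},Z_{t+1}\right)$ depends on $Z_{t+1}$ only through $Z_{t+1}M_{t+2}$, so $\xi_{t+1}$, and hence $Y_t$, is independent of this choice, while distinct choices merely yield $\thicksim_M$-equivalent $Z$'s.

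For uniqueness I would again induct downward. If $\left(Y^1,Z^1\right)$ and $\left(Y^2,Z^2\right)$ both solve (\ref{bsde_form1}), then $Y^1_T=Y^2_T=\eta$; assuming $Y^1_s=Y^2_s$ and $Z^1_s\thicksim_{M_{s+1}}Z^2_s$ for all $s\ge t+1$, Assumption A1 (with A2 covering $s=T$) gives $f\left(\omega,t+1,Y^1_{t+1},Z^1_{t+1}\right)=f\left(\omega,t+1,Y^2_{t+1},Z^2_{t+1}\right)$, hence $\xi^1_{t+1}=\xi^2_{t+1}$; taking $\mathcal{F}_t$-conditional expectations yields $Y^1_t=Y^2_t$, and subtracting gives $Z^1_tM_{t+1}=Z^2_tM_{t+1}$, i.e. $Z^1_t\thicksim_{M_{t+1}}Z^2_t$. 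Under Assumption \ref{general_assumption} all these identities hold for every $\omega$, so $Y^1$ and $Y^2$ are indistinguishable.

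The whole argument is elementary once the one-step form is isolated; the only external input beyond bookkeeping is the finite-state martingale representation theorem, and the only genuine obstacle is controlling the interplay between the non-uniqueness of $Z$ and the structure that Assumptions A1--A2 impose on $f$, which is precisely what keeps the recursion well-posed. I do not expect the absence of an It\^{o} isometry or the path-dependence of $M$ to cause trouble for this statement, since existence and uniqueness over a finite horizon with finitely many paths need no quantitative estimates; those enter only in the variational analysis later in the paper.
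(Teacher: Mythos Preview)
Your argument is correct: the backward one-step recursion, combined with the discrete-time finite-state martingale representation theorem of \cite{ce10+}, yields existence, and the downward induction gives uniqueness; Assumptions A1 and A2 are invoked at exactly the right places to make the recursion well-posed despite the $\thicksim_{M}$-freedom in $Z$.

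Note, however, that the paper does not actually prove this theorem: it is stated with the sentence ``We have the following existence and uniqueness theorem of BS$\Delta$E (\ref{bsde_form1}) in \cite{ji-liu}'' and the proof is deferred to the companion paper \cite{ji-liu}. So there is no in-paper proof to compare against. Your backward-recursion argument is the standard and natural route for this class of equations---and is almost certainly what appears in \cite{ji-liu}---so there is no meaningful divergence to report. Your closing remark that no quantitative estimates are needed here (in contrast to the variational analysis in Sections~3--4) is also on point.
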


We define the $d\times\left(  d-1\right)  $ matrix $\widetilde{I}=%
\begin{pmatrix}
I_{d-1} & -\mathbf{1}_{d-1}%
\end{pmatrix}
^{\ast},$ where $I_{d-1}$ is $\left(  d-1\right)  $-dimensional identity
matrix, $\mathbf{1}_{d-1}\mathbf{=}\left(  1,1,...,1\right)  ^{\ast}$ is
$\left(  d-1\right)  $-dimensional vector with every element being equal to
$1$. Then, we consider two types of controlled systems.

Problem 1 (partially coupled system):

The controlled system is%
\begin{equation}
\left\{
\begin{array}
[c]{rcl}%
\Delta X_{t} & = & b\left(  \omega,t,X_{t},u_{t}\right)  +\sum\limits_{i=1}%
^{m}e_{i}\cdot\sigma_{i}\left(  \omega,t,X_{t},u_{t}\right)  M_{t+1},\\
\Delta Y_{t} & = & -f\left(  \omega,t+1,X_{t+1},Y_{t+1},Z_{t+1}\widetilde{I}%
,u_{t+1}\right)  +Z_{t}M_{t+1},\\
X_{0} & = & x_{0},\\
Y_{T} & = & y_{T}%
\end{array}
\right.  \label{c5_eq_m_pc_state_eq}%
\end{equation}

and the cost functional is%
\begin{equation}
J\left(  u\left(  \cdot\right)  \right)  =\mathbb{E}\sum_{t=0}^{T-1}l\left(
\omega,t,X_{t},Y_{t},Z_{t}\widetilde{I},u_{t}\right)  +h\left(  \omega
,X_{T}\right)  \label{c5_eq_m_pc_cost_eq}%
\end{equation}
where%
\begin{align*}
b\left(  \omega,t,x,u\right)   &  :\Omega\times\left\{  0,1,...,T-1\right\}
\times\mathbb{R}^{m}\times\mathbb{R}^{r}\mathbb{\rightarrow R}^{m},\\
\sigma_{i}\left(  \omega,t,x,u\right)   &  :\Omega\times\left\{
0,1,...,T-1\right\}  \times\mathbb{R}^{m}\times\mathbb{R}^{r}%
\mathbb{\rightarrow R}^{1\times d},\\
f\left(  \omega,t,x,y,\widetilde{z},u\right)   &  :\Omega\times\left\{
1,2,...,T\right\}  \times\mathbb{R}^{m}\times\mathbb{R}^{n}\times
\mathbb{R}^{n\times\left(  d-1\right)  }\times\mathbb{R}^{r}%
\mathbb{\rightarrow R}^{m},\\
l\left(  \omega,t,x,y,\widetilde{z},u\right)   &  :\Omega\times\left\{
0,1,...,T-1\right\}  \times\mathbb{R}^{m}\times\mathbb{R}^{n}\times
\mathbb{R}^{n\times\left(  d-1\right)  }\times\mathbb{R}^{r}%
\mathbb{\rightarrow R},\\
h\left(  \omega,x\right)   &  :\Omega\times\mathbb{R}^{m}\mathbb{\rightarrow
R}.
\end{align*}

Problem 2 (fully coupled system):

The controlled system is:%
\begin{equation}
\left\{
\begin{array}
[c]{rcl}%
\Delta X_{t} & = & b\left(  \omega,t,X_{t},Y_{t},Z_{t}\widetilde{I}%
,u_{t}\right)  +\sigma\left(  \omega,t,X_{t},Y_{t},Z_{t}\widetilde{I}%
,u_{t}\right)  M_{t+1},\\
\Delta Y_{t} & = & -f\left(  \omega,t+1,X_{t+1},Y_{t+1},Z_{t+1}\widetilde{I}%
,u_{t+1}\right)  +Z_{t}M_{t+1},\\
X_{0} & = & x_{0},\\
Y_{T} & = & y_{T},
\end{array}
\right.  \label{f_c_c_2_state_eq}%
\end{equation}

and the cost functional is%
\begin{equation}
J\left(  u\left(  \cdot\right)  \right)  =\mathbb{E}\sum_{t=0}^{T-1}l\left(
\omega,t,X_{t},Y_{t},Z_{t}\widetilde{I},u_{t}\right)  +h\left(  \omega
,X_{T}\right)  \label{f_c_c_2_cost_eq}%
\end{equation}
where%
\begin{align*}
b\left(  \omega,t,x,y,\widetilde{z},u\right)   &  :\Omega\times\left\{
0,1,...,T-1\right\}  \times\mathbb{R}\times\mathbb{R}\times\mathbb{\mathbb{R}%
}^{1\times\left(  d-1\right)  }\times\mathbb{R}^{r}\mathbb{\rightarrow R},\\
\sigma\left(  \omega,t,x,y,\widetilde{z},u\right)   &  :\Omega\times\left\{
0,1,...,T-1\right\}  \times\mathbb{R}\times\mathbb{R}\times\mathbb{\mathbb{R}%
}^{1\times\left(  d-1\right)  }\times\mathbb{R}^{r}\mathbb{\rightarrow
R}^{1\times d},\\
f\left(  \omega,t,x,y,\widetilde{z},u\right)   &  :\Omega\times\left\{
1,2,...,T\right\}  \times\mathbb{R}\times\mathbb{R}\times\mathbb{\mathbb{R}%
}^{1\times\left(  d-1\right)  }\times\mathbb{R}^{r}\mathbb{\rightarrow R},\\
l\left(  \omega,t,x,y,\widetilde{z},u\right)   &  :\Omega\times\left\{
0,1,...,T-1\right\}  \times\mathbb{R}\times\mathbb{R}\times\mathbb{\mathbb{R}%
}^{1\times\left(  d-1\right)  }\times\mathbb{R}^{r}\mathbb{\rightarrow R},\\
h\left(  \omega,x\right)   &  :\Omega\times\mathbb{R\rightarrow R}.
\end{align*}

Let $\left\{  U_{t}\right\}  _{t\in\left\{  0,1,...,T\right\}  }$ be a
sequence of nonempty convex subset of $\mathbb{R}^{r}$. We denote the set of
admissible controls $\mathcal{U}$ by $\mathcal{U}=\left\{  u\left(
\cdot\right)  \in\mathcal{M}\left(  0,T;\mathbb{R}^{r}\right)  |u\left(
t\right)  \in U_{t}\right\}  .$ It can be seen that in Problem 1, $b$ and
$\sigma$ do not contain the solution $(Y,Z)$ of the backward equation. This
kind of FBS$\Delta$E is called the partially coupled FBS$\Delta$E. Meanwhile,
the system in Problem 2 is called the fully coupled FBS$\Delta$E.

The optimal control problem is to find the optimal control $u\in\mathcal{U}$,
such that the optimal control and the corresponding state trajectory can
minimize the cost functional $J\left(  u\left(  \cdot\right)  \right)  $. In
this paper, we assume the control domain is convex.

\begin{remark}
The cost functional in \cite{p93} consists of three parts: the running cost
functional, the terminal cost functional of $X_{T}$, the initial cost
functional of $Y_{0}$. In our formulation, if we take $l\left(  \omega
,0,X_{0},Y_{0},Z_{0},u_{0}\right)  =\gamma\left(  \omega,Y_{0}\right)  $, then
the cost functional (\ref{f_c_c_2_cost_eq}) for our discrete time framework
can be reduced to the cost functional in \cite{p93} formally.
\end{remark}

For controlled system (\ref{c5_eq_m_pc_state_eq})-(\ref{c5_eq_m_pc_cost_eq}),
we assume that:

\begin{assumption}
\label{c5_assumption_m_pc}For $\varphi=b$, $\sigma_{i}\widetilde{I}$, $f$,
$l$, $h$,

\begin{enumerate}
\item $\varphi$\ is an adapted map, i.e. for any\ $\left(  x,y,\widetilde{z}%
,u\right)  \in\mathbb{R}^{m}\times\mathbb{R}^{n}\times\mathbb{R}%
^{n\times\left(  d-1\right)  }\times\mathbb{R}^{r}$, $\varphi\left(
\cdot,\cdot,x,y,\widetilde{z},u\right)  $ is $\left\{  \mathcal{F}%
_{t}\right\}  $-adapted process.

\item for any $t\in\left\{  0,1,...,T\right\}  $ and $\omega\in\Omega$,
$\varphi\left(  \omega,t,\cdot,\cdot,\cdot,\cdot\right)  \,$is continuously
differentiable with respect to $x,y,\widetilde{z},u$, and $\varphi_{x}%
,\varphi_{y},\varphi_{\widetilde{z}_{i}},\varphi_{u}$ are uniformly bounded.
Also, for $t=T$, $f$ is independent of $\widetilde{z}$ at time $T$.
\end{enumerate}
\end{assumption}

Set%
\begin{align*}
\lambda &  =\left(  x,y,z\right)  ,\\
A\left(  t,\lambda;u\right)   &  =\left(  -f\left(  t,\lambda;u\right)
,b\left(  t,\lambda;u\right)  ,\sigma\left(  t,\lambda;u\right)
\mathbb{E}\left[  M_{t+1}M_{t+1}^{\ast}|\mathcal{F}_{t}\right]  \right)
\end{align*}
and%
\begin{align*}
\left\vert \lambda\right\vert  &  =\left\vert x\right\vert +\left\vert
y\right\vert +\left\vert z\widetilde{I}\right\vert ,\\
\left\vert A\left(  t,\lambda\right)  \right\vert  &  =\left\vert f\left(
t,\lambda\right)  \right\vert +\left\vert b\left(  t,\lambda\right)
\right\vert +\left\vert \sigma\left(  t,\lambda\right)  \mathbb{E}\left[
M_{t+1}M_{t+1}^{\ast}|\mathcal{F}_{t}\right]  \right\vert .
\end{align*}

For controlled system (\ref{f_c_c_2_state_eq})-(\ref{f_c_c_2_cost_eq}), we
additionally assume that:

\begin{assumption}
\label{f_c_c_2_control_assumption}For any $u\in\mathcal{U}$, the coefficients
in (\ref{f_c_c_2_state_eq}) satisfy the following monotone conditions, i.e.
when $t\in\left\{  1,...,T-1\right\}  $,%
\begin{gather*}
\left\langle A\left(  t,\lambda_{1};u\right)  -A\left(  t,\lambda
_{2};u\right)  ,\lambda_{1}-\lambda_{2}\right\rangle \leq-\alpha\left\vert
\lambda_{1}-\lambda_{2}\right\vert ^{2},\\
\forall\lambda_{1},\lambda_{2}\in\mathbb{R}\times\mathbb{R}\times
\mathbb{\mathbb{R}}^{1\times d};
\end{gather*}

when $t=T$,%
\[
\left\langle -f\left(  T,x_{1},y,z\widetilde{I},u\right)  +f\left(
T,x_{2},y,z\widetilde{I},u\right)  ,x_{1}-x_{2}\right\rangle \leq
-\alpha\left\vert x_{1}-x_{2}\right\vert ^{2};
\]

when $t=0$,%
\[%
\begin{array}
[c]{cl}
& \left\langle b\left(  0,\lambda_{1};u\right)  -b\left(  0,\lambda
_{2};u\right)  ,y_{1}-y_{2}\right\rangle \\
& +\left\langle \left(  \sigma\left(  0,\lambda_{1};u\right)  -\sigma\left(
0,\lambda_{2};u\right)  \right)  \mathbb{E}\left[  M_{1}M_{1}^{\ast
}|\mathcal{F}_{0}\right]  ,z_{1}-z_{2}\right\rangle \\
\leq & -\alpha\left[  \left\vert y_{1}-y_{2}\right\vert ^{2}+\left\vert
\left(  z_{1}-z_{2}\right)  \widetilde{I}\right\vert ^{2}\right]  .
\end{array}
\]
where $\alpha$ is a given positive constant.
\end{assumption}

Besides, in the following, we formally denote $b\left(  T,x,y,z\widetilde{I}%
,u\right)  \equiv0$, $\sigma\left(  T,x,y,z\widetilde{I},u\right)  \equiv0$,
$l\left(  T,x,y,z\widetilde{I},u\right)  \equiv0$, $f\left(
0,x,y,z\widetilde{I},u\right)  \equiv0$.

\section{Maximum principle for the partially coupled FBS$\Delta$E system}

For any $u\in\mathcal{U}$, it is obvious that there exists a unique solution
$\left\{  X_{t}\right\}  _{t=0}^{T}\in\mathcal{M}\left(  0,T;\mathbb{R}%
^{m}\right)  $ to the forward stochastic difference equation in the system
(\ref{c5_eq_m_pc_state_eq}). According to Lemma 2.3 in \cite{ji-liu}, it can
be seen that $f$ satisfies Assumption (\ref{generator_assumption}). So given
$X$, by Theorem \ref{bsde_result_l}, the backward equation in the system
(\ref{c5_eq_m_pc_state_eq}) has a unique solution $\left(  Y,Z\right)  $.

Suppose that $\bar{u}=\left\{  \bar{u}_{t}\right\}  _{t=0}^{T}$ is the optimal
control of problem (\ref{c5_eq_m_pc_state_eq})-(\ref{c5_eq_m_pc_cost_eq}) and
$\left(  \bar{X},\bar{Y},\bar{Z}\right)  $ is the corresponding optimal
trajectory. For a fixed time $0\leq s\leq T$, choose any $\Delta v\in L\left(
\mathcal{F}_{s};\mathbb{R}^{r}\right)  $ such that $\bar{u}_{s}+\Delta v$
takes values in $U_{s}$. For any $\varepsilon\in\left[  0,1\right]  $,
construct the perturbed admissible control
\begin{equation}
u_{t}^{\varepsilon}=\left(  1-\delta_{ts}\right)  \bar{u}_{t}+\delta
_{ts}\left(  \bar{u}_{s}+\varepsilon\Delta v\right)  =\bar{u}_{t}+\delta
_{ts}\varepsilon\Delta v, \label{perturb-control}%
\end{equation}
where $\delta_{ts}=1$ for $t=s$, $\delta_{ts}=0$ for $t\neq s$ and
$t\in\left\{  0,1,...,T\right\}  $. Since $U_{s}$ is a convex set, $\left\{
u_{t}^{\varepsilon}\right\}  _{t=0}^{T}\in\mathcal{U}$ is an admissible
control. Let $\left(  X^{\varepsilon},Y^{\varepsilon},Z^{\varepsilon
},N^{\varepsilon}\right)  $ be the solution of (\ref{c5_eq_m_pc_state_eq})
corresponding to the control $u^{\varepsilon}$.

Set%
\begin{equation}%
\begin{array}
[c]{rclrcl}%
\bar{\varphi}\left(  t\right)  & = & \varphi\left(  t,\bar{X}_{t},\bar{Y}%
_{t},\bar{Z}_{t}\widetilde{I},\bar{u}_{t}\right)  , & \varphi^{\varepsilon
}\left(  t\right)  & = & \varphi\left(  t,X_{t}^{\varepsilon},Y_{t}%
^{\varepsilon},Z_{t}^{\varepsilon}\widetilde{I},u_{t}^{\varepsilon}\right)
,\\
\widetilde{\varphi}^{\varepsilon}\left(  t\right)  & = & \varphi\left(
t,\bar{X}_{t},\bar{Y}_{t},\bar{Z}_{t}\widetilde{I},u_{t}^{\varepsilon}\right)
, & \varphi_{\mu}\left(  t\right)  & = & \varphi_{\mu}\left(  t,\bar{X}%
_{t},\bar{Y}_{t},\bar{Z}_{t}\widetilde{I},\bar{u}_{t}\right)  ,
\end{array}
\label{coefficient_notations}%
\end{equation}
where $\varphi=b$, $\sigma_{i}$,\ $f$, $l$, $h$ and $\mu=x$, $y$, $z_{i}$ and
$u$.

Then, we have the following estimates.

\begin{lemma}
Under Assumption \ref{c5_assumption_m_pc}, we have%
\begin{equation}
\sup_{0\leq t\leq T}\mathbb{E}\left\vert X_{t}^{\varepsilon}-\bar{X}%
_{t}\right\vert ^{2}\leq C\varepsilon^{2}\mathbb{E}\left\vert \Delta
v\right\vert ^{2}. \label{c5_eq_m_pc_x_estimate_1}%
\end{equation}

\end{lemma}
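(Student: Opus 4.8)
The plan is to derive the estimate by writing down the difference equation satisfied by $X_t^{\varepsilon}-\bar X_t$ and applying a discrete Gr\"onwall-type argument. First I would note that, since the forward equation in (\ref{c5_eq_m_pc_state_eq}) does not involve $(Y,Z)$, the process $\xi_t:=X_t^{\varepsilon}-\bar X_t$ satisfies $\xi_0=0$ and
\[
\Delta\xi_t=\bigl(b^{\varepsilon}(t)-\bar b(t)\bigr)+\sum_{i=1}^{m}e_i\cdot\bigl(\sigma_i^{\varepsilon}(t)-\bar\sigma_i(t)\bigr)M_{t+1},
\]
where $b^{\varepsilon}(t)=b(t,X_t^{\varepsilon},u_t^{\varepsilon})$ and similarly for $\sigma_i$. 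Summing from $0$ to $t-1$ gives
\[
\xi_t=\sum_{k=0}^{t-1}\bigl(b^{\varepsilon}(k)-\bar b(k)\bigr)+\sum_{k=0}^{t-1}\sum_{i=1}^{m}e_i\cdot\bigl(\sigma_i^{\varepsilon}(k)-\bar\sigma_i(k)\bigr)M_{k+1}.
\]

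Next I would split each increment into a "state" part and a "control" part: $b^{\varepsilon}(k)-\bar b(k)=\bigl(b(k,X_k^{\varepsilon},u_k^{\varepsilon})-b(k,\bar X_k,u_k^{\varepsilon})\bigr)+\bigl(\widetilde b^{\varepsilon}(k)-\bar b(k)\bigr)$, and likewise for $\sigma_i$. By the mean value theorem and the uniform boundedness of $b_x,\sigma_{i,x}$ from Assumption \ref{c5_assumption_m_pc}, the first part is bounded in norm by $C|\xi_k|$. The control part vanishes unless $k=s$ (because $u_k^{\varepsilon}=\bar u_k$ for $k\neq s$ by (\ref{perturb-control})), and at $k=s$ the mean value theorem together with boundedness of $b_u,\sigma_{i,u}$ gives $|\widetilde b^{\varepsilon}(s)-\bar b(s)|\le C\varepsilon|\Delta v|$ and the same for $\sigma_i$. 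Then I would take $\mathbb{E}|\xi_t|^2$, use $|\sum_{k=0}^{t-1}a_k|^2\le T\sum_{k=0}^{t-1}|a_k|^2$ on the drift sum and, crucially, the orthogonality of the martingale differences to handle the stochastic sum: since $M_{k+1}$ has conditional mean zero given $\mathcal F_k$ and the integrands are $\mathcal F_k$-adapted, the cross terms vanish and $\mathbb{E}\bigl|\sum_{k}e_i(\sigma_i^{\varepsilon}(k)-\bar\sigma_i(k))M_{k+1}\bigr|^2=\sum_k \mathbb{E}\bigl[(\sigma_i^{\varepsilon}(k)-\bar\sigma_i(k))\mathbb{E}[M_{k+1}M_{k+1}^{\ast}|\mathcal F_k](\sigma_i^{\varepsilon}(k)-\bar\sigma_i(k))^{\ast}\bigr]$, which is bounded by $C\sum_k\mathbb{E}|\sigma_i^{\varepsilon}(k)-\bar\sigma_i(k)|^2$ because $\mathbb{E}[M_{k+1}M_{k+1}^{\ast}|\mathcal F_k]$ is a bounded matrix (its entries are probabilities). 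Collecting everything yields
\[
\mathbb{E}|\xi_t|^2\le C\varepsilon^2\mathbb{E}|\Delta v|^2+C\sum_{k=0}^{t-1}\mathbb{E}|\xi_k|^2.
\]

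Finally I would apply the discrete Gr\"onwall inequality: since $T$ is finite, iterating the above bound gives $\mathbb{E}|\xi_t|^2\le C\varepsilon^2(1+C)^{T}\mathbb{E}|\Delta v|^2$ for all $t\le T$, and taking the supremum over $t$ gives (\ref{c5_eq_m_pc_x_estimate_1}) with a constant $C$ depending only on $T$, the bounds on the derivatives of $b,\sigma$, and $d$. The main obstacle — and the place where the discrete finite-state setting genuinely differs from the continuous-time case — is the treatment of the diffusion term: one cannot invoke an It\^o isometry, and the seminorm structure associated with $Z M_{t+1}$ discussed in the introduction must be respected. Here, however, the $\sigma_i$ terms in the forward equation are genuine functions (not $Z$-type objects defined only up to $\sim_M$), so the estimate goes through cleanly using only that $\mathbb{E}[M_{k+1}M_{k+1}^{\ast}|\mathcal F_k]$ is uniformly bounded; I would make sure to state this boundedness explicitly (it follows from $W_{k+1}$ taking values in the basis vectors and Assumption \ref{general_assumption}), since it is what replaces the isometry.
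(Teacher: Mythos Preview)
Your overall strategy is sound and essentially equivalent to the paper's: the paper proceeds by direct induction on $t$ (first noting $X_t^\varepsilon=\bar X_t$ for $t\le s$, then estimating the jump at $t=s+1$, then propagating by a one--step recursion $\mathbb{E}|X_t^\varepsilon-\bar X_t|^2\le C\,\mathbb{E}|X_{t-1}^\varepsilon-\bar X_{t-1}|^2$ for $t>s+1$), which is just the unrolled form of your Gr\"onwall argument.

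There is, however, a genuine gap in your treatment of the diffusion term. You invoke ``boundedness of $\sigma_{i,u}$'' and of $\sigma_{i,x}$, but Assumption~\ref{c5_assumption_m_pc} does \emph{not} bound the derivatives of $\sigma_i$: look at the list ``$\varphi=b,\ \sigma_i\widetilde I,\ f,\ l,\ h$''. What is assumed bounded is $(\sigma_i\widetilde I)_x=\sigma_{ix}\widetilde I$ and $(\sigma_i\widetilde I)_u=\sigma_{iu}\widetilde I$. Consequently you cannot conclude $|\sigma_i^\varepsilon(k)-\bar\sigma_i(k)|\le C(|\xi_k|+\varepsilon|\Delta v|)$; only $|(\sigma_i^\varepsilon(k)-\bar\sigma_i(k))\widetilde I|$ admits such a bound. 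Your closing paragraph asserts exactly the opposite --- that the $\widetilde I$ seminorm structure is irrelevant for the forward equation because $\sigma_i$ is ``a genuine function'' --- but this is precisely where the hypothesis forces you to work through $\widetilde I$.

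The paper closes this gap by invoking Proposition~2.4 of \cite{ji-liu}, which gives the comparison
\[
\mathbb{E}\bigl|(\sigma_i^\varepsilon(k)-\bar\sigma_i(k))M_{k+1}\bigr|^2\le C\,\mathbb{E}\bigl|(\sigma_i^\varepsilon(k)-\bar\sigma_i(k))\widetilde I\bigr|^2,
\]
the point being that $\mathbf 1_d^\ast M_{k+1}=0$, so the component of any row vector along $\mathbf 1_d$ is annihilated and only the $\widetilde I$--part survives. Once you insert this step, the bounded--derivative hypothesis on $\sigma_i\widetilde I$ applies and the remainder of your argument goes through unchanged.
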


\begin{proof}
In the following, the positive constant $C$ may change from lines to lines.

When $t=0,...,s$, $X_{t}^{\varepsilon}=\bar{X}_{t}$.

When $t=s+1$,
\[
X_{s+1}^{\varepsilon}-\bar{X}_{s+1}=\widetilde{b}^{\varepsilon}\left(
s\right)  -\overline{b}\left(  s\right)  +\sum\limits_{i=1}^{m}e_{i}%
\cdot\left[  \widetilde{\sigma_{i}}^{\varepsilon}\left(  s\right)
-\overline{\sigma}_{i}\left(  s\right)  \right]  M_{s+1}.
\]
Then,%
\[
\mathbb{E}\left\vert X_{s+1}^{\varepsilon}-\bar{X}_{s+1}\right\vert ^{2}%
\leq2\mathbb{E}\left[  \left\vert \widetilde{b}^{\varepsilon}\left(  s\right)
-\overline{b}\left(  s\right)  \right\vert ^{2}+\sum\limits_{i=1}%
^{m}\left\vert \left[  \widetilde{\sigma_{i}}^{\varepsilon}\left(  s\right)
-\overline{\sigma}_{i}\left(  s\right)  \right]  M_{s+1}\right\vert
^{2}\right]  .
\]
By the boundedness of $b_{u}$, we have
\[
\mathbb{E}\left[  \left\vert \widetilde{b}^{\varepsilon}\left(  s\right)
-\overline{b}\left(  s\right)  \right\vert ^{2}\right]  \leq C\mathbb{E}%
\left[  \left\vert u_{s}^{\varepsilon}-\bar{u}_{s}\right\vert ^{2}\right]
=C\varepsilon^{2}\mathbb{E}\left[  \left\vert \Delta v\right\vert ^{2}\right]
.
\]
By the Proposition 2.4\ in \cite{ji-liu} and boundedness of $\sigma
_{iu}\widetilde{I}$, we have%
\[%
\begin{array}
[c]{cl}
& \mathbb{E}\left\vert \left[  \widetilde{\sigma_{i}}^{\varepsilon}\left(
s\right)  -\overline{\sigma}_{i}\left(  s\right)  \right]  M_{s+1}\right\vert
^{2}\\
\leq & C\mathbb{E}\left[  \left\vert \left[  \widetilde{\sigma_{i}%
}^{\varepsilon}\left(  s\right)  -\overline{\sigma}_{i}\left(  s\right)
\right]  \widetilde{I}\right\vert ^{2}\right] \\
\leq & C\varepsilon^{2}\mathbb{E}\left[  \left\vert \Delta v\right\vert
^{2}\right]
\end{array}
\]
which leads to%
\[
\mathbb{E}\left\vert X_{s+1}^{\varepsilon}-\bar{X}_{s+1}\right\vert ^{2}\leq
C\varepsilon^{2}\mathbb{E}\left[  \left\vert \Delta v\right\vert ^{2}\right]
.
\]

When $t=s+2,...,T$,%
\[%
\begin{array}
[c]{cl}
& \mathbb{E}\left\vert X_{t}^{\varepsilon}-\bar{X}_{t}\right\vert ^{2}\\
\leq & 2\mathbb{E}\left[  \left\vert b\left(  t-1,X_{t-1}^{\varepsilon}%
,\bar{u}_{t-1}\right)  -b\left(  t-1,\bar{X}_{t-1},\bar{u}_{t-1}\right)
\right\vert ^{2}\right. \\
& +\left.  \sum\limits_{i=1}^{m}\left\vert \left[  \sigma_{i}\left(
t-1,X_{t-1}^{\varepsilon},\bar{u}_{t-1}\right)  -\sigma_{i}\left(  t-1,\bar
{X}_{t-1},\bar{u}_{t-1}\right)  \right]  M_{t}\right\vert ^{2}\right]  .
\end{array}
\]
Due to the boundedness of $b_{x}$, $\sigma_{ix}\widetilde{I}$, combined with
the Proposition 2.4,\ we obtain $\mathbb{E}\left\vert X_{t}^{\varepsilon}%
-\bar{X}_{t}\right\vert ^{2}\leq C\mathbb{E}\left[  \left\vert X_{t-1}%
^{\varepsilon}-\bar{X}_{t-1}\right\vert ^{2}\right]  $. Thus, by induction we
prove the result.
\end{proof}

Let $\xi=\left\{  \xi_{t}\right\}  _{t=0}^{T}$ be the solution to the
following difference equation,%
\begin{equation}
\left\{
\begin{array}
[c]{rcl}%
\Delta\xi_{t} & = & b_{x}\left(  t\right)  \xi_{t}+\delta_{ts}b_{u}\left(
t\right)  \varepsilon\Delta v+\sum\limits_{i=1}^{m}e_{i}\cdot\left[  \xi
_{t}^{\ast}\sigma_{ix}\left(  t\right)  +\delta_{ts}\varepsilon\Delta v^{\ast
}\sigma_{iu}\left(  t\right)  \right]  M_{t+1},\\
\xi_{0} & = & 0.
\end{array}
\right.  \label{variational eq-xi}%
\end{equation}

It is easy to check that%
\begin{equation}
\sup_{0\leq t\leq T}\mathbb{E}\left\vert \xi_{t}\right\vert ^{2}\leq
C\varepsilon^{2}\mathbb{E}\left\vert \Delta v\right\vert ^{2}.
\label{c5_eq_m_pc_xi_estimate}%
\end{equation}

and we have the following result:

\begin{lemma}
\label{c5_lemma_m_pc_x_approxi}Under Assumption \ref{c5_assumption_m_pc}, we
have%
\[
\sup_{0\leq t\leq T}\mathbb{E}\left\vert X_{t}^{\varepsilon}-\bar{X}_{t}%
-\xi_{t}\right\vert ^{2}=o\left(  \varepsilon^{2}\right)  .
\]

\end{lemma}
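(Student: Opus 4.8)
The plan is to estimate $\zeta_t^{\varepsilon}:=X_t^{\varepsilon}-\bar X_t-\xi_t$ by writing down its own difference equation, bounding the nonlinear (Taylor) remainder, and closing a discrete Gronwall inequality. Since the control is perturbed only at $t=s$ and the forward equation is explicit, one has $X_t^{\varepsilon}=\bar X_t$ and $\xi_t=0$ for $t\le s$ (induction on $t$ using $\xi_0=0$, $X_0^\varepsilon=\bar X_0=x_0$, and $u^\varepsilon_t=\bar u_t$ for $t\neq s$), hence $\zeta_t^{\varepsilon}=0$ for $t\le s$; it therefore suffices to run the argument for $t\ge s$. To obtain the equation for $\zeta^{\varepsilon}$, I would Taylor-expand $b$ and each $\sigma_i$ along the segment joining $(\bar X_t,\bar u_t)$ and $(X_t^{\varepsilon},u_t^{\varepsilon})$, using $u_t^{\varepsilon}-\bar u_t=\delta_{ts}\varepsilon\Delta v$ and writing $\eta_t^{\varepsilon}:=X_t^{\varepsilon}-\bar X_t$:
\[
b(t,X_t^{\varepsilon},u_t^{\varepsilon})-\bar b(t)-b_x(t)\xi_t-\delta_{ts}b_u(t)\varepsilon\Delta v=b_x(t)\zeta_t^{\varepsilon}+\rho_t^{b},
\]
\[
\rho_t^{b}=\Big(\!\int_0^1\big[b_x(t,\bar X_t+\lambda\eta_t^{\varepsilon},\bar u_t+\lambda\delta_{ts}\varepsilon\Delta v)-b_x(t)\big]\,d\lambda\Big)\eta_t^{\varepsilon}+\delta_{ts}\Big(\!\int_0^1\big[b_u(\cdots)-b_u(t)\big]\,d\lambda\Big)\varepsilon\Delta v,
\]
and analogously for each $\sigma_i$, with remainder $\rho_t^{\sigma_i}$. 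Subtracting the equations for $\bar X$ and for $\xi$ (equation (\ref{variational eq-xi})) from that for $X^{\varepsilon}$ then yields
\[
\zeta_{t+1}^{\varepsilon}=\zeta_t^{\varepsilon}+b_x(t)\zeta_t^{\varepsilon}+\sum_{i=1}^{m}e_i\cdot\big[(\zeta_t^{\varepsilon})^{\ast}\sigma_{ix}(t)\big]M_{t+1}+\rho_t^{b}+\sum_{i=1}^{m}e_i\cdot\rho_t^{\sigma_i}M_{t+1},\qquad \zeta_s^{\varepsilon}=0.
\]

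The second step is to prove $\mathbb{E}|\rho_t^{b}|^2+\sum_i\mathbb{E}|\rho_t^{\sigma_i}\widetilde I|^2=o(\varepsilon^2)$. By Assumption \ref{c5_assumption_m_pc} the derivatives $b_x,b_u,\sigma_{ix}\widetilde I,\sigma_{iu}\widetilde I$ are bounded and continuous in $(x,u)$; since $u_t^{\varepsilon}\to\bar u_t$ and, by the preceding estimate (\ref{c5_eq_m_pc_x_estimate_1}), $X_t^{\varepsilon}\to\bar X_t$ in $L^2$ — hence, the probability space being finite under Assumption \ref{general_assumption}, $\omega$-wise — the bracketed integrals above are uniformly bounded and tend to $0$ pointwise as $\varepsilon\to0$. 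Thus each remainder has the form $\kappa_t^{\varepsilon}\eta_t^{\varepsilon}$ or $\varepsilon\,\kappa_t^{\varepsilon}\Delta v$ with $\kappa_t^{\varepsilon}$ uniformly bounded and $\kappa_t^{\varepsilon}\to0$ a.s.; the second type is $o(\varepsilon^2)$ in $L^2$ by dominated convergence (recall $\Delta v$ is fixed), and for the first type I would use that on the finite space $(\Omega,\mathcal F,P)$ all $L^p$-norms are equivalent, so $\mathbb{E}|\eta_t^{\varepsilon}|^2\le C\varepsilon^2\mathbb{E}|\Delta v|^2$ upgrades to $\mathbb{E}|\eta_t^{\varepsilon}|^4\le C\varepsilon^4$, whence $\mathbb{E}|\kappa_t^{\varepsilon}\eta_t^{\varepsilon}|^2\le(\mathbb{E}|\kappa_t^{\varepsilon}|^4)^{1/2}(\mathbb{E}|\eta_t^{\varepsilon}|^4)^{1/2}=o(1)\cdot O(\varepsilon^2)$. (At $t=s$ one has $\eta_s^{\varepsilon}=0$, so only the $\varepsilon\,\kappa_s^{\varepsilon}\Delta v$ part survives, which is already $o(\varepsilon^2)$.)

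Finally I would close the induction. Squaring the displayed equation for $\zeta^{\varepsilon}$, taking expectations, using $|a+b|^2\le 2|a|^2+2|b|^2$, the boundedness of $b_x$ and $\sigma_{ix}\widetilde I$, and Proposition 2.4 of \cite{ji-liu} to bound $\mathbb{E}|[(\zeta_t^{\varepsilon})^{\ast}\sigma_{ix}(t)]M_{t+1}|^2\le C\,\mathbb{E}|(\zeta_t^{\varepsilon})^{\ast}\sigma_{ix}(t)\widetilde I|^2\le C\,\mathbb{E}|\zeta_t^{\varepsilon}|^2$ and $\mathbb{E}|\rho_t^{\sigma_i}M_{t+1}|^2\le C\,\mathbb{E}|\rho_t^{\sigma_i}\widetilde I|^2$, one obtains
\[
\mathbb{E}|\zeta_{t+1}^{\varepsilon}|^2\le C\,\mathbb{E}|\zeta_t^{\varepsilon}|^2+o(\varepsilon^2),\qquad t=s,\dots,T-1,\qquad \zeta_s^{\varepsilon}=0.
\]
Since $T<\infty$, iterating over the finitely many steps gives $\sup_{0\le t\le T}\mathbb{E}|\zeta_t^{\varepsilon}|^2=o(\varepsilon^2)$, which is the assertion.

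The step I expect to be the main obstacle is precisely the $o(\varepsilon^2)$ (not merely $O(\varepsilon^2)$) estimate of the Taylor remainder: one must exploit that it is a product of a factor that \emph{vanishes} ($\kappa_t^{\varepsilon}\to0$) with a factor that is only $O(\varepsilon)$ in $L^2$ ($\eta_t^{\varepsilon}$). This is where Assumption \ref{general_assumption} genuinely enters — it forces $(\Omega,\mathcal F,P)$ to carry finitely many atoms, each of positive probability, so that all $L^p$-norms are equivalent and the $L^2$-estimate for $X^{\varepsilon}-\bar X$ can be promoted to an $L^4$-estimate; Cauchy--Schwarz and dominated convergence then finish it. Everything else — matching the transposes/dimensions in (\ref{variational eq-xi}) with the expansion above, and the Gronwall iteration over $t=s,\dots,T$ — is routine.
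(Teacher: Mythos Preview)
Your proposal is correct and follows essentially the same route as the paper: write the difference equation for $X_t^\varepsilon-\bar X_t-\xi_t$, show the Taylor remainder is $o(\varepsilon^2)$ using continuity of the derivatives together with the finite-state-space structure (Assumption~\ref{general_assumption}), and close by a finite-step induction using Proposition~2.4 of \cite{ji-liu}. The only cosmetic difference is that the paper organizes the remainder as $[\widetilde b_x(t)-b_x(t)]\,\xi_t$ (with the variation $\xi_t$ rather than your $\eta_t^\varepsilon=X_t^\varepsilon-\bar X_t$) and concludes directly from pointwise convergence of the derivative difference on the finite space, whereas you make the finite-space step explicit via an $L^4$/Cauchy--Schwarz argument; the two are equivalent here.
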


\begin{proof}
When $t=0,...,s$, $X_{t}^{\varepsilon}=\bar{X}_{t}$ and $\xi_{t}=0$ which lead
to $X_{t}^{\varepsilon}-\bar{X}_{t}-\xi_{t}=0.$

When $t=s+1$,%
\[%
\begin{array}
[c]{cl}
& X_{s+1}^{\varepsilon}-\bar{X}_{s+1}-\xi_{s+1}\\
= & \left[  \widetilde{b}_{u}\left(  s\right)  -b_{u}\left(  s\right)
\right]  \varepsilon\Delta v+\sum\limits_{i=1}^{m}e_{i}\cdot\varepsilon\Delta
v^{\ast}\left[  \widetilde{\sigma}_{iu}\left(  s\right)  -\sigma_{iu}\left(
s\right)  \right]  M_{s+1}%
\end{array}
\]
where
\begin{align*}
\widetilde{b}_{u}\left(  s\right)   &  =\int_{0}^{1}b_{u}\left(  s,\bar{X}%
_{s},\bar{u}_{s}+\lambda\left(  u_{s}^{\varepsilon}-\bar{u}_{s}\right)
\right)  d\lambda,\\
\widetilde{\sigma}_{iu}\left(  s\right)   &  =\int_{0}^{1}\sigma_{iu}\left(
s,\bar{X}_{s},\bar{u}_{s}+\lambda\left(  u_{s}^{\varepsilon}-\bar{u}%
_{s}\right)  \right)  d\lambda.
\end{align*}
Then
\[%
\begin{array}
[c]{cl}
& \mathbb{E}\left\vert X_{s+1}^{\varepsilon}-\bar{X}_{s+1}-\xi_{s+1}%
\right\vert ^{2}\\
\leq & 2\mathbb{E}\left[  \left\vert \left[  \widetilde{b}_{u}\left(
s\right)  -b_{u}\left(  s\right)  \right]  \varepsilon\Delta v\right\vert
^{2}+\sum\limits_{i=1}^{m}\left(  \varepsilon\Delta v^{\ast}\left[
\widetilde{\sigma}_{iu}\left(  s\right)  -\sigma_{iu}\left(  s\right)
\right]  M_{s+1}\right)  ^{2}\right] \\
\leq & C\mathbb{E}\left[  \left\Vert \widetilde{b}_{u}\left(  s\right)
-b_{u}\left(  s\right)  \right\Vert ^{2}\left\vert \Delta v\right\vert
^{2}+\sum\limits_{i=1}^{m}\left\Vert \left[  \widetilde{\sigma}_{iu}\left(
s\right)  -\sigma_{iu}\left(  s\right)  \right]  \widetilde{I}\right\Vert
^{2}\left\vert \Delta v\right\vert ^{2}\right]  \varepsilon^{2}.
\end{array}
\]
Since $\left\Vert \widetilde{b}_{u}\left(  s\right)  -b_{u}\left(  s\right)
\right\Vert \rightarrow0$ and $\left\Vert \left[  \widetilde{\sigma}%
_{iu}\left(  s\right)  -\sigma_{iu}\left(  s\right)  \right]  \widetilde{I}%
\right\Vert \rightarrow0$ as $\varepsilon\rightarrow0$, we have%
\[
\lim_{\varepsilon\rightarrow0}\frac{1}{\varepsilon^{2}}\mathbb{E}\left\vert
X_{s+1}^{\varepsilon}-\bar{X}_{s+1}-\xi_{s+1}\right\vert ^{2}=0.
\]

When $t=s+2,...,T$,%
\[%
\begin{array}
[c]{cl}
& X_{t}^{\varepsilon}-\bar{X}_{t}-\xi_{t}\\
= & \widetilde{b}_{x}\left(  t-1\right)  \left(  X_{t-1}^{\varepsilon}-\bar
{X}_{t-1}-\xi_{t-1}\right)  +\left[  \widetilde{b}_{x}\left(  t-1\right)
-b_{x}\left(  t-1\right)  \right]  \xi_{t-1}\\
& +\sum\limits_{i=1}^{m}e_{i}\cdot\left\{  \left(  X_{t-1}^{\varepsilon}%
-\bar{X}_{t-1}-\xi_{t-1}\right)  ^{\ast}\widetilde{\sigma}_{ix}\left(
t-1\right)  +\xi_{t-1}^{\ast}\left[  \widetilde{\sigma}_{ix}\left(
t-1\right)  -\sigma_{ix}\left(  t-1\right)  \right]  \right\}  M_{t}%
\end{array}
\]
where
\begin{align*}
\widetilde{b}_{x}\left(  t\right)   &  =\int_{0}^{1}b_{x}\left(  t,\bar{X}%
_{t}+\lambda\left(  X_{t}^{\varepsilon}-\bar{X}_{t}\right)  ,\bar{u}%
_{t}\right)  d\lambda,\\
\widetilde{\sigma}_{ix}\left(  t\right)   &  =\int_{0}^{1}\sigma_{ix}\left(
t,\bar{X}_{t}+\lambda\left(  X_{t}^{\varepsilon}-\bar{X}_{t}\right)  ,\bar
{u}_{t}\right)  d\lambda.
\end{align*}
Then%
\[%
\begin{array}
[c]{cl}
& \mathbb{E}\left\vert X_{t}^{\varepsilon}-\bar{X}_{t}-\xi_{t}\right\vert
^{2}\\
\leq & C\mathbb{E}\left[  \left\Vert \widetilde{b}_{x}\left(  t-1\right)
\right\Vert ^{2}\left\vert X_{t-1}^{\varepsilon}-\bar{X}_{t-1}-\xi
_{t-1}\right\vert ^{2}+\left\Vert \widetilde{b}_{x}\left(  t-1\right)
-b_{x}\left(  t-1\right)  \right\Vert ^{2}\left\vert \xi_{t-1}\right\vert
^{2}\right. \\
& \left.  +\sum\limits_{i=1}^{m}\left\vert \widetilde{\sigma}_{ix}\left(
t-1\right)  M_{t}\right\vert ^{2}\left\vert X_{t-1}^{\varepsilon}-\bar
{X}_{t-1}-\xi_{t-1}\right\vert ^{2}+\sum\limits_{i=1}^{m}\left\vert \left[
\widetilde{\sigma}_{ix}\left(  t-1\right)  -\sigma_{ix}\left(  t-1\right)
\right]  M_{t}\right\vert ^{2}\left\vert \xi_{t-1}\right\vert ^{2}\right] \\
\leq & C\mathbb{E}\left[  \left(  \left\Vert \widetilde{b}_{x}\left(
t-1\right)  \right\Vert ^{2}+\sum\limits_{i=1}^{m}\left\Vert \widetilde{\sigma
}_{ix}\left(  t-1\right)  \widetilde{I}\right\Vert ^{2}\right)  \left\vert
X_{t-1}^{\varepsilon}-\bar{X}_{t-1}-\xi_{t-1}\right\vert ^{2}\right. \\
& +\left\Vert \widetilde{b}_{x}\left(  t-1\right)  -b_{x}\left(  t-1\right)
\right\Vert ^{2}\left\vert \xi_{t-1}\right\vert ^{2}\left.  +\sum
\limits_{i=1}^{m}\left\Vert \left[  \widetilde{\sigma}_{ix}\left(  t-1\right)
-\sigma_{ix}\left(  t-1\right)  \right]  \widetilde{I}\right\Vert
^{2}\left\vert \xi_{t-1}\right\vert ^{2}\right]  .
\end{array}
\]
It is easy to check that $\left\Vert \widetilde{b}_{x}\left(  t-1\right)
-b_{x}\left(  t-1\right)  \right\Vert \rightarrow0$ and $\left\Vert \left[
\widetilde{\sigma}_{ix}\left(  t-1\right)  -\sigma_{ix}\left(  t-1\right)
\right]  \widetilde{I}\right\Vert \rightarrow0$ as $\varepsilon\rightarrow0$.
Since $\widetilde{b}_{x}\left(  t-1\right)  $ and $\widetilde{\sigma}%
_{ix}\left(  t-1\right)  $ are bounded, by the estimation
(\ref{c5_eq_m_pc_xi_estimate}), we have%
\[
\lim_{\varepsilon\rightarrow0}\frac{1}{\varepsilon^{2}}\mathbb{E}\left\vert
X_{t}^{\varepsilon}-\bar{X}_{t}-\xi_{t}\right\vert ^{2}=0.
\]
This completes the proof.
\end{proof}

\begin{lemma}
Under Assumption \ref{c5_assumption_m_pc}, we have
\begin{align}
\sup_{0\leq t\leq T}\mathbb{E}\left\vert Y_{t}^{\varepsilon}-\bar{Y}%
_{t}\right\vert ^{2}  &  \leq C\varepsilon^{2}\mathbb{E}\left\vert \Delta
v\right\vert ^{2},\label{c5_eq_m_pc_y_estimate_1}\\
\sup_{0\leq t\leq T-1}\mathbb{E}\left\Vert \left(  Z_{t}^{\varepsilon}-\bar
{Z}_{t}\right)  \widetilde{I}\right\Vert ^{2}  &  \leq C\varepsilon
^{2}\mathbb{E}\left\vert \Delta v\right\vert ^{2}.
\label{c5_eq_m_pc_z_estimate_1}%
\end{align}

\end{lemma}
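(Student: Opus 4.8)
The plan is a backward-in-time induction of the type used for \emph{a priori} BSDE estimates; the only genuinely new ingredient is the passage between $\mathbb{E}\|Z_t\widetilde I\|^{2}$ and $\mathbb{E}|Z_tM_{t+1}|^{2}$ supplied by Proposition 2.4 in \cite{ji-liu}. Write $\widehat X_t=X_t^{\varepsilon}-\bar X_t$, $\widehat Y_t=Y_t^{\varepsilon}-\bar Y_t$, $\widehat Z_t=Z_t^{\varepsilon}-\bar Z_t$, and $\widehat f(t)=f(t,X_t^{\varepsilon},Y_t^{\varepsilon},Z_t^{\varepsilon}\widetilde I,u_t^{\varepsilon})-f(t,\bar X_t,\bar Y_t,\bar Z_t\widetilde I,\bar u_t)$. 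By Assumption \ref{c5_assumption_m_pc} (boundedness of $f_x,f_y,f_{\widetilde z_i},f_u$) and the integral form of the mean value theorem, $|\widehat f(t)|\le C(|\widehat X_t|+|\widehat Y_t|+\|\widehat Z_t\widetilde I\|+|u_t^{\varepsilon}-\bar u_t|)$ for $t\le T-1$, while at $t=T$ the $\widetilde z$-argument is absent so $|\widehat f(T)|\le C(|\widehat X_T|+|u_T^{\varepsilon}-\bar u_T|)$; recall also $u_t^{\varepsilon}-\bar u_t=\delta_{ts}\varepsilon\Delta v$, $\widehat Y_T=0$, and $\mathbb{E}|\widehat X_t|^{2}\le C\varepsilon^{2}\mathbb{E}|\Delta v|^{2}$ for every $t$ by the previous lemma. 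Here $C$ denotes a constant depending only on $T$ and the bounds of Assumption \ref{c5_assumption_m_pc}, which may change from line to line.

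I would first treat $Y$. Summing the backward equation and using $\widehat Y_T=0$,
\[
\widehat Y_t=\sum_{r=t}^{T-1}\widehat f(r+1)-\sum_{r=t}^{T-1}\widehat Z_rM_{r+1}.
\]
Taking $\mathbb{E}[\,\cdot\,|\mathcal{F}_t]$ and using $\mathbb{E}[M_{r+1}|\mathcal{F}_r]=0$ together with the $\mathcal{F}_r$-measurability of $\widehat Z_r$ ($r\ge t$) makes the stochastic sum vanish, so $\widehat Y_t=\mathbb{E}\big[\sum_{r=t}^{T-1}\widehat f(r+1)\,\big|\,\mathcal{F}_t\big]$. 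Conditional Jensen, the inequality $(\sum_{i=1}^{k}x_i)^{2}\le k\sum_{i=1}^{k}x_i^{2}$ applied to the at most $T$ summands, then expectation, the bound on $|\widehat f|$ and $\mathbb{E}|\widehat X_j|^{2}\le C\varepsilon^{2}\mathbb{E}|\Delta v|^{2}$ yield
\[
\mathbb{E}|\widehat Y_t|^{2}\le C\varepsilon^{2}\mathbb{E}|\Delta v|^{2}+C\sum_{j=t+1}^{T-1}\big(\mathbb{E}|\widehat Y_j|^{2}+\mathbb{E}\|\widehat Z_j\widetilde I\|^{2}\big),
\]
the $j=T$ contribution being absorbed into the first term since $\widehat Y_T=0$ and $\mathbb{E}|\widehat f(T)|^{2}\le C\varepsilon^{2}\mathbb{E}|\Delta v|^{2}$.

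Next I would treat $Z$. From the backward equation, $\widehat Z_tM_{t+1}=\widehat Y_{t+1}-\widehat Y_t+\widehat f(t+1)$, so
\[
\mathbb{E}|\widehat Z_tM_{t+1}|^{2}\le C\varepsilon^{2}\mathbb{E}|\Delta v|^{2}+C\big(\mathbb{E}|\widehat Y_t|^{2}+\mathbb{E}|\widehat Y_{t+1}|^{2}+\mathbb{E}|\widehat X_{t+1}|^{2}+\mathbb{E}\|\widehat Z_{t+1}\widetilde I\|^{2}\,\mathbf{1}_{\{t+1\le T-1\}}\big).
\]
This is where the finite-state structure enters: by Proposition 2.4 in \cite{ji-liu} --- applicable because Assumption \ref{general_assumption} keeps $\mathbb{E}[M_{t+1}M_{t+1}^{\ast}|\mathcal{F}_t]$ uniformly nondegenerate in the directions spanned by the columns of $\widetilde I$, making $\mathbb{E}\|Z\widetilde I\|^{2}$ and $\mathbb{E}|ZM_{t+1}|^{2}$ equivalent --- one has $\mathbb{E}\|\widehat Z_t\widetilde I\|^{2}\le C\,\mathbb{E}|\widehat Z_tM_{t+1}|^{2}$, which converts the last display into a bound for $\mathbb{E}\|\widehat Z_t\widetilde I\|^{2}$ in terms of $\mathbb{E}|\widehat Y_t|^{2}$, $\mathbb{E}|\widehat Y_{t+1}|^{2}$, $\mathbb{E}\|\widehat Z_{t+1}\widetilde I\|^{2}$ and $\varepsilon^{2}\mathbb{E}|\Delta v|^{2}$.

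Finally I would close the estimate. Writing $a_t=\mathbb{E}|\widehat Y_t|^{2}$, $b_t=\mathbb{E}\|\widehat Z_t\widetilde I\|^{2}$, with $a_T=0$ and $b_T:=0$, the two displays read $a_t\le C\varepsilon^{2}\mathbb{E}|\Delta v|^{2}+C\sum_{j>t}(a_j+b_j)$ and $b_t\le C\varepsilon^{2}\mathbb{E}|\Delta v|^{2}+C(a_t+a_{t+1}+b_{t+1})$. Iterating the second inequality backward --- a finite recursion since $b_T=0$ --- gives $b_t\le C\big(\varepsilon^{2}\mathbb{E}|\Delta v|^{2}+\sum_{j\ge t}a_j\big)$; substituting this into the first gives $a_t\le C\varepsilon^{2}\mathbb{E}|\Delta v|^{2}+C\sum_{j>t}a_j$, and the discrete backward Gronwall inequality with $a_T=0$ yields $a_t\le C\varepsilon^{2}\mathbb{E}|\Delta v|^{2}$ uniformly in $t$, hence also $b_t\le C\varepsilon^{2}\mathbb{E}|\Delta v|^{2}$; taking suprema gives (\ref{c5_eq_m_pc_y_estimate_1})--(\ref{c5_eq_m_pc_z_estimate_1}). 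I expect the $Z$-estimate to be the main obstacle: because the generator in (\ref{bsde_form1}) is evaluated at time $t+1$, the natural identity for $\widehat Z_t$ feeds back $\widehat Z_{t+1}$, so one cannot run Gronwall directly in the pair $(a_t,b_t)$ and must first eliminate the $b_{t+1}$-terms, and --- more essentially --- controlling $\mathbb{E}\|\widehat Z_t\widetilde I\|^{2}$ by $\mathbb{E}|\widehat Z_tM_{t+1}|^{2}$ is possible only through the norm equivalence of \cite{ji-liu}, itself resting on Assumption \ref{general_assumption}; everything else (boundedness of the derivatives of the coefficients, Cauchy--Schwarz, finiteness of $T$) is routine.
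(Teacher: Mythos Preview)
Your proposal is correct and follows essentially the same approach as the paper: a backward-in-time estimate using the Lipschitz bound on $f$, the previous $X$-estimate (\ref{c5_eq_m_pc_x_estimate_1}), and Proposition~2.4 of \cite{ji-liu} to control $\mathbb{E}\|\widehat Z_t\widetilde I\|^2$ by $\mathbb{E}|\widehat Z_tM_{t+1}|^2$. The only organizational difference is that the paper sets up a clean one-step recursion $a_t+b_t\le C(a_{t+1}+b_{t+1})+C\varepsilon^2\mathbb{E}|\Delta v|^2$ --- by writing $\widehat Z_tM_{t+1}=(\widehat Y_{t+1}+\widehat f(t+1))-\mathbb{E}[\widehat Y_{t+1}+\widehat f(t+1)\mid\mathcal{F}_t]$ and using $\mathbb{E}|X-\mathbb{E}[X\mid\mathcal{F}_t]|^2\le\mathbb{E}|X|^2$, so that $a_t$ never appears on the right --- and then iterates directly, whereas you sum up and invoke Gronwall; these are equivalent.
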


\begin{proof}
It is obvious that $Y_{T}^{\varepsilon}-\bar{Y}_{T}=0$ at time $T$.

When $t=s,...,T-1$ (if $s=T$, skip this part), we have%
\[%
\begin{array}
[c]{cl}
& \mathbb{E}\left\vert f\left(  t+1,X_{t+1}^{\varepsilon},Y_{t+1}%
^{\varepsilon},Z_{t+1}^{\varepsilon}\widetilde{I},\bar{u}_{t+1}\right)
-f\left(  t+1,\bar{X}_{t+1},\bar{Y}_{t+1},\bar{Z}_{t+1}\widetilde{I},\bar
{u}_{t+1}\right)  \right\vert ^{2}\\
\leq & C\mathbb{E}\left[  \left\vert X_{t+1}^{\varepsilon}-\bar{X}%
_{t+1}\right\vert ^{2}+\left\vert Y_{t+1}^{\varepsilon}-\bar{Y}_{t+1}%
\right\vert ^{2}+\left\Vert Z_{t+1}^{\varepsilon}\widetilde{I}-\bar{Z}%
_{t+1}\widetilde{I}\right\Vert ^{2}\right] \\
\leq & C\mathbb{E}\left[  \left\vert Y_{t+1}^{\varepsilon}-\bar{Y}%
_{t+1}\right\vert ^{2}+\left\Vert Z_{t+1}^{\varepsilon}\widetilde{I}-\bar
{Z}_{t+1}\widetilde{I}\right\Vert ^{2}\right]  +C_{1}\varepsilon^{2}%
\mathbb{E}\left\vert \Delta v\right\vert ^{2}.
\end{array}
\]
It yields that%
\[%
\begin{array}
[c]{cl}
& \mathbb{E}\left\vert Y_{t}^{\varepsilon}-\bar{Y}_{t}\right\vert ^{2}\\
\leq & C\mathbb{E}\left[  \left\vert Y_{t+1}^{\varepsilon}-\bar{Y}%
_{t+1}\right\vert ^{2}+\left\Vert Z_{t+1}^{\varepsilon}\widetilde{I}-\bar
{Z}_{t+1}\widetilde{I}\right\Vert ^{2}\right]  +C\varepsilon^{2}%
\mathbb{E}\left\vert \Delta v\right\vert ^{2}.
\end{array}
\]
Similarly, we have%
\[%
\begin{array}
[c]{cl}
& \mathbb{E}\left\vert \left(  Z_{t}^{\varepsilon}-\bar{Z}_{t}\right)
M_{t+1}\right\vert ^{2}\\
\leq & C\mathbb{E}\left[  \left\vert Y_{t+1}^{\varepsilon}-\bar{Y}%
_{t+1}\right\vert ^{2}+\left\Vert Z_{t+1}^{\varepsilon}\widetilde{I}-\bar
{Z}_{t+1}\widetilde{I}\right\Vert ^{2}\right]  +C\varepsilon^{2}%
\mathbb{E}\left\vert \Delta v\right\vert ^{2}.
\end{array}
\]
Combined with Proposition 2.4, we have%
\[%
\begin{array}
[c]{cl}
& \mathbb{E}\left[  \left\Vert \left(  Z_{t}^{\varepsilon}-\bar{Z}_{t}\right)
\widetilde{I}\right\Vert ^{2}\right] \\
\leq & C\mathbb{E}\left[  \left\vert \left(  Z_{t}^{\varepsilon}-\bar{Z}%
_{t}\right)  M_{t+1}\right\vert ^{2}\right] \\
\leq & C\mathbb{E}\left[  \left\vert Y_{t+1}^{\varepsilon}-\bar{Y}%
_{t+1}\right\vert ^{2}+\left\Vert Z_{t+1}^{\varepsilon}\widetilde{I}-\bar
{Z}_{t+1}\widetilde{I}\right\Vert ^{2}\right]  +C\varepsilon^{2}%
\mathbb{E}\left\vert \Delta v\right\vert ^{2}.
\end{array}
\]

When $t=s-1$, by similar analysis,%
\[%
\begin{array}
[c]{rcc}%
\mathbb{E}\left\vert Y_{t}^{\varepsilon}-\bar{Y}_{t}\right\vert ^{2} & \leq &
C\mathbb{E}\left[  \left\vert Y_{t+1}^{\varepsilon}-\bar{Y}_{t+1}\right\vert
^{2}+\left\Vert \left(  Z_{t+1}^{\varepsilon}-\bar{Z}_{t+1}\right)
\widetilde{I}\right\Vert ^{2}\right]  +C\varepsilon^{2}\mathbb{E}\left\vert
\Delta v\right\vert ^{2},\\
\mathbb{E}\left[  \left\Vert \left(  Z_{t}^{\varepsilon}-\bar{Z}_{t}\right)
\widetilde{I}\right\Vert ^{2}\right]  & \leq & C\mathbb{E}\left[  \left\vert
Y_{t+1}^{\varepsilon}-\bar{Y}_{t+1}\right\vert ^{2}+\left\Vert \left(
Z_{t+1}^{\varepsilon}-\bar{Z}_{t+1}\right)  \widetilde{I}\right\Vert
^{2}\right]  +C\varepsilon^{2}\mathbb{E}\left\vert \Delta v\right\vert ^{2}.
\end{array}
\]
If $s=T$,%
\[
\left\{
\begin{array}
[c]{rcc}%
\mathbb{E}\left\vert Y_{T-1}^{\varepsilon}-\bar{Y}_{T-1}\right\vert ^{2} &
\leq & C\varepsilon^{2}\mathbb{E}\left\vert \Delta v\right\vert ^{2},\\
\mathbb{E}\left[  \left\Vert \left(  Z_{T-1}^{\varepsilon}-\bar{Z}%
_{T-1}\right)  \widetilde{I}\right\Vert ^{2}\right]  & \leq & C\varepsilon
^{2}\mathbb{E}\left\vert \Delta v\right\vert ^{2}.
\end{array}
\right.
\]

When $t=0,...,s-2$, we have
\[%
\begin{array}
[c]{rcc}%
\mathbb{E}\left\vert Y_{t}^{\varepsilon}-\bar{Y}_{t}\right\vert ^{2} & \leq &
C\mathbb{E}\left[  \left\vert Y_{t+1}^{\varepsilon}-\bar{Y}_{t+1}\right\vert
^{2}+\left\Vert \left(  Z_{t+1}^{\varepsilon}-\bar{Z}_{t+1}\right)
\widetilde{I}\right\Vert ^{2}\right]  ,\\
\mathbb{E}\left[  \left\Vert \left(  Z_{t}^{\varepsilon}-\bar{Z}_{t}\right)
\widetilde{I}\right\Vert ^{2}\right]  & \leq & C\mathbb{E}\left[  \left\vert
Y_{t+1}^{\varepsilon}-\bar{Y}_{t+1}\right\vert ^{2}+\left\Vert \left(
Z_{t+1}^{\varepsilon}-\bar{Z}_{t+1}\right)  \widetilde{I}\right\Vert
^{2}\right]  .
\end{array}
\]

Thus, there exists $C>0$, such that for any $t\in\left\{  0,1,...,T\right\}
$,
\[
\left\{
\begin{array}
[c]{rcc}%
\mathbb{E}\left\vert Y_{t}^{\varepsilon}-\bar{Y}_{t}\right\vert ^{2} & \leq &
C\varepsilon^{2}\mathbb{E}\left\vert \Delta v\right\vert ^{2},\\
\mathbb{E}\left[  \left\Vert \left(  Z_{t}^{\varepsilon}-\bar{Z}_{t}\right)
\widetilde{I}\right\Vert ^{2}\right]  & \leq & C\varepsilon^{2}\mathbb{E}%
\left\vert \Delta v\right\vert ^{2}.
\end{array}
\right.
\]
This completes the proof.
\end{proof}

Let $\left(  \eta,\zeta\right)  $ be the solution to the following BS$\Delta$E,%

\[
\left\{
\begin{array}
[c]{rcl}%
\Delta\eta_{t} & = & -f_{x}\left(  t+1\right)  \xi_{t+1}-f_{y}\left(
t+1\right)  \eta_{t+1}-\delta_{\left(  t+1\right)  s}f_{u}\left(  t+1\right)
\varepsilon\Delta v\\
&  & -\sum\limits_{i=1}^{n}f_{\widetilde{z}_{i}}\left(  t+1\right)
\widetilde{I}^{\ast}\zeta_{t+1}^{\ast}e_{i}+\zeta_{t}M_{t+1},\\
\eta_{T} & = & 0.
\end{array}
\right.
\]

It is easy to check that%
\begin{align*}
\sup_{0\leq t\leq T}\mathbb{E}\left\vert \eta_{t}\right\vert ^{2}  &  \leq
C\varepsilon^{2}\mathbb{E}\left\vert \Delta v\right\vert ^{2},\\
\sup_{0\leq t\leq T-1}\mathbb{E}\left\Vert \zeta_{t}\widetilde{I}\right\Vert
^{2}  &  \leq C\varepsilon^{2}\mathbb{E}\left\vert \Delta v\right\vert ^{2}.
\end{align*}

and we have the following result:

\begin{lemma}
\label{c5_lemma_m_pc_yz_approxi}Under Assumption \ref{c5_assumption_m_pc}, we
have
\begin{align*}
\sup_{0\leq t\leq T}\mathbb{E}\left\vert Y_{t}^{\varepsilon}-\bar{Y}_{t}%
-\eta_{t}\right\vert ^{2}  &  =o\left(  \varepsilon^{2}\right)  ,\\
\sup_{0\leq t\leq T-1}\mathbb{E}\left\Vert \left(  Z_{t}^{\varepsilon}-\bar
{Z}_{t}-\zeta_{t}\right)  \widetilde{I}\right\Vert ^{2}  &  =o\left(
\varepsilon^{2}\right)  .
\end{align*}

\end{lemma}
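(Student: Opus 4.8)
The plan is to run a backward induction in time on the second-order error, mirroring the proof of Lemma \ref{c5_lemma_m_pc_x_approxi} but now for the backward component of the system. Write $\Xi_{t}:=X_{t}^{\varepsilon}-\bar{X}_{t}-\xi_{t}$, $H_{t}:=Y_{t}^{\varepsilon}-\bar{Y}_{t}-\eta_{t}$ and $\Theta_{t}:=\left(Z_{t}^{\varepsilon}-\bar{Z}_{t}-\zeta_{t}\right)\widetilde{I}$. Lemma \ref{c5_lemma_m_pc_x_approxi} already gives $\sup_{0\leq t\leq T}\mathbb{E}\left\vert\Xi_{t}\right\vert^{2}=o\left(\varepsilon^{2}\right)$, so it remains only to control $H$ and $\Theta$, with terminal datum $H_{T}=0$.

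Fix $t\in\left\{0,\ldots,T-1\right\}$. I would subtract the backward difference equation of (\ref{c5_eq_m_pc_state_eq}) for $\left(Y^{\varepsilon},Z^{\varepsilon}\right)$, that for $\left(\bar{Y},\bar{Z}\right)$, and the equation defining $\left(\eta,\zeta\right)$, then expand $f\left(t+1,\cdot\right)$ by the first-order Taylor formula with integral remainder along the segment joining $\left(\bar{X}_{t+1},\bar{Y}_{t+1},\bar{Z}_{t+1}\widetilde{I},\bar{u}_{t+1}\right)$ to $\left(X_{t+1}^{\varepsilon},Y_{t+1}^{\varepsilon},Z_{t+1}^{\varepsilon}\widetilde{I},u_{t+1}^{\varepsilon}\right)$, introducing averaged derivatives $\widetilde{f}_{x}\left(t+1\right),\widetilde{f}_{y}\left(t+1\right),\widetilde{f}_{\widetilde{z}_{i}}\left(t+1\right),\widetilde{f}_{u}\left(t+1\right)$ exactly as $\widetilde{b}_{x}$ and $\widetilde{\sigma}_{ix}$ were introduced in the proof of Lemma \ref{c5_lemma_m_pc_x_approxi}. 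For each argument $x,y,\widetilde{z},u$ I split the relevant difference (averaged derivative applied to $X^{\varepsilon}_{t+1}-\bar{X}_{t+1}$, $Y^{\varepsilon}_{t+1}-\bar{Y}_{t+1}$, $(Z^{\varepsilon}_{t+1}-\bar{Z}_{t+1})\widetilde{I}$, or $u^{\varepsilon}_{t+1}-\bar{u}_{t+1}$, minus base derivative applied to $\xi_{t+1},\eta_{t+1},\zeta_{t+1}\widetilde{I}$, or $\delta_{(t+1)s}\varepsilon\Delta v$) into a principal piece, the averaged derivative applied to the error process $\Xi_{t+1}$, $H_{t+1}$, or $\Theta_{t+1}$, plus a remainder piece, the difference of averaged and base derivatives applied to the corresponding variation process. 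This produces an identity
\[
H_{t}+\left(Z_{t}^{\varepsilon}-\bar{Z}_{t}-\zeta_{t}\right)M_{t+1}=H_{t+1}+\Lambda_{t+1}+\rho_{t},
\]
where $\Lambda_{t+1}$ is a linear combination of $\Xi_{t+1},H_{t+1},\Theta_{t+1}$ with coefficients uniformly bounded by Assumption \ref{c5_assumption_m_pc}, and $\rho_{t}$ gathers $\left(\widetilde{f}_{x}-f_{x}\right)\left(t+1\right)\xi_{t+1}$, $\left(\widetilde{f}_{y}-f_{y}\right)\left(t+1\right)\eta_{t+1}$, the terms built from $\left(\widetilde{f}_{\widetilde{z}_{i}}-f_{\widetilde{z}_{i}}\right)\left(t+1\right)$ and $\zeta_{t+1}\widetilde{I}$, and $\delta_{\left(t+1\right)s}\left(\widetilde{f}_{u}-f_{u}\right)\left(t+1\right)\varepsilon\Delta v$.

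I would then square this identity and take expectations. Since $M_{t+1}$ is an $\mathcal{F}_{t}$-martingale difference while $H_{t}$ and $Z_{t}^{\varepsilon}-\bar{Z}_{t}-\zeta_{t}$ are $\mathcal{F}_{t}$-measurable, the cross term drops, so using boundedness of the coefficients of $\Lambda_{t+1}$,
\[
\mathbb{E}\left\vert H_{t}\right\vert^{2}+\mathbb{E}\left\vert\left(Z_{t}^{\varepsilon}-\bar{Z}_{t}-\zeta_{t}\right)M_{t+1}\right\vert^{2}\leq C\,\mathbb{E}\left[\left\vert H_{t+1}\right\vert^{2}+\left\vert\Xi_{t+1}\right\vert^{2}+\left\vert\Theta_{t+1}\right\vert^{2}\right]+C\,\mathbb{E}\left\vert\rho_{t}\right\vert^{2}.
\]
By Proposition 2.4 in \cite{ji-liu}, $\mathbb{E}\left\vert\Theta_{t}\right\vert^{2}\leq C\,\mathbb{E}\left\vert\left(Z_{t}^{\varepsilon}-\bar{Z}_{t}-\zeta_{t}\right)M_{t+1}\right\vert^{2}$, so the left side dominates $\mathbb{E}\left\vert H_{t}\right\vert^{2}+\mathbb{E}\left\vert\Theta_{t}\right\vert^{2}$ up to a constant. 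For the remainder, each of $\xi_{t+1},\eta_{t+1},\zeta_{t+1}\widetilde{I},\varepsilon\Delta v$ is $O\left(\varepsilon\right)$ in $L^{2}$ by (\ref{c5_eq_m_pc_xi_estimate}) and the bounds on $\left(\eta,\zeta\right)$ displayed just before the lemma, while $\widetilde{f}_{x}\left(t+1\right)-f_{x}\left(t+1\right)$ and its analogues are uniformly bounded and tend to $0$ as $\varepsilon\rightarrow0$ --- pointwise on $\Omega$, which is effectively finite under Assumption \ref{general_assumption} --- because $f_{x},f_{y},f_{\widetilde{z}_{i}},f_{u}$ are continuous and $\left(X^{\varepsilon}_{t+1},Y^{\varepsilon}_{t+1},Z^{\varepsilon}_{t+1}\widetilde{I},u^{\varepsilon}_{t+1}\right)\rightarrow\left(\bar{X}_{t+1},\bar{Y}_{t+1},\bar{Z}_{t+1}\widetilde{I},\bar{u}_{t+1}\right)$ by the earlier estimates; dominated convergence then gives $\mathbb{E}\left\vert\rho_{t}\right\vert^{2}=o\left(\varepsilon^{2}\right)$. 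Finally, since $H_{T}=0$ and at $t=T$ the generator is $\widetilde{z}$-independent so that no $\Theta_{T}$ occurs, a backward recursion over the finitely many indices $t=T-1,\ldots,0$, together with $\sup_{t}\mathbb{E}\left\vert\Xi_{t}\right\vert^{2}=o\left(\varepsilon^{2}\right)$, propagates $\mathbb{E}\left\vert H_{t}\right\vert^{2}+\mathbb{E}\left\vert\Theta_{t}\right\vert^{2}=o\left(\varepsilon^{2}\right)$ to every $t$, which is the assertion.

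I expect the diffusion-term estimate to be the main obstacle: squaring and the martingale property only control $\mathbb{E}\left\vert\left(Z_{t}^{\varepsilon}-\bar{Z}_{t}-\zeta_{t}\right)M_{t+1}\right\vert^{2}$, whereas the claim concerns the norm of $\left(Z_{t}^{\varepsilon}-\bar{Z}_{t}-\zeta_{t}\right)\widetilde{I}$, and passing between them is exactly where the discrete-time norm-comparison result, Proposition 2.4 in \cite{ji-liu}, is indispensable --- there is no It\^{o} isometry and $M_{t+1}$ depends on the past. The second subtle point is that $\rho_{t}$ must be shown to be genuinely $o\left(\varepsilon^{2}\right)$ and not merely $O\left(\varepsilon^{2}\right)$, which is where continuity (rather than mere boundedness) of the derivatives of $f$, together with the strong convergence of $\left(X^{\varepsilon},Y^{\varepsilon},Z^{\varepsilon}\widetilde{I},u^{\varepsilon}\right)$ from the preceding lemmas, is needed. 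Everything else --- the integral-remainder expansion, Cauchy--Schwarz, and the finite backward Gronwall iteration --- is the same book-keeping already carried out for Lemma \ref{c5_lemma_m_pc_x_approxi} and for the $\left(Y,Z\right)$ estimates (\ref{c5_eq_m_pc_y_estimate_1})--(\ref{c5_eq_m_pc_z_estimate_1}).
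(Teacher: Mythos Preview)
Your proposal is correct and follows essentially the same argument as the paper: Taylor expansion of $f^{\varepsilon}(t+1)-\bar f(t+1)$ with integral remainder, the split into a principal part (averaged derivatives applied to $\Xi_{t+1},H_{t+1},\Theta_{t+1}$) and a remainder part (derivative differences applied to $\xi_{t+1},\eta_{t+1},\zeta_{t+1}\widetilde I,\varepsilon\Delta v$), the passage from $\mathbb{E}\bigl|(Z_{t}^{\varepsilon}-\bar Z_{t}-\zeta_{t})M_{t+1}\bigr|^{2}$ to $\mathbb{E}\bigl|(Z_{t}^{\varepsilon}-\bar Z_{t}-\zeta_{t})\widetilde I\bigr|^{2}$ via Proposition~2.4 of \cite{ji-liu}, and the backward induction from $H_{T}=0$. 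The only organizational difference is that you square the single identity $H_{t}+(Z_{t}^{\varepsilon}-\bar Z_{t}-\zeta_{t})M_{t+1}=H_{t+1}+\Lambda_{t+1}+\rho_{t}$ and use the martingale orthogonality to obtain the $H$ and $\Theta$ bounds simultaneously, whereas the paper first takes $\mathbb{E}[\,\cdot\,|\mathcal F_{t}]$ to isolate $H_{t}$ and then writes the $\Theta_{t}$ estimate separately; both routes yield the same recursive inequality.
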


\begin{proof}
When $t=T$, $Y_{T}^{\varepsilon}-\bar{Y}_{T}-\eta_{T}=0$.

When $t\in\left\{  0,1,...,T-1\right\}  $, we have%
\[%
\begin{array}
[c]{cl}
& Y_{t}^{\varepsilon}-\bar{Y}_{t}-\eta_{t}\\
= & \mathbb{E}\left[  Y_{t+1}^{\varepsilon}-\bar{Y}_{t+1}-\eta_{t+1}%
+f^{\varepsilon}\left(  t+1\right)  -\overline{f}\left(  t+1\right)  \right.
\\
& -f_{x}\left(  t+1\right)  \xi_{t+1}-f_{y}\left(  t+1\right)  \eta_{t+1}\\
& \left.  -\sum\limits_{i=1}^{n}f_{\widetilde{z}_{i}}\left(  t+1\right)
\widetilde{I}^{\ast}\zeta_{t+1}^{\ast}e_{i}-\delta_{\left(  t+1\right)
s}f_{u}\left(  t+1\right)  \varepsilon\Delta v|\mathcal{F}_{t}\right] \\
= & \mathbb{E}\left[  Y_{t+1}^{\varepsilon}-\bar{Y}_{t+1}-\eta_{t+1}\right. \\
& +\widetilde{f}_{x}\left(  t+1\right)  \left(  X_{t+1}^{\varepsilon}-\bar
{X}_{t+1}\right)  +\widetilde{f}_{y}\left(  t+1\right)  \left(  Y_{t+1}%
^{\varepsilon}-\bar{Y}_{t+1}\right) \\
& +\sum\limits_{i=1}^{n}\widetilde{f}_{\widetilde{z}_{i}}\left(  t+1\right)
\widetilde{I}^{\ast}\left(  Z_{t+1}^{\varepsilon}-\bar{Z}_{t+1}\right)
^{\ast}e_{i}+\delta_{\left(  t+1\right)  s}\widetilde{f}_{u}\left(
t+1\right)  \varepsilon\Delta v\\
& -f_{x}\left(  t+1\right)  \xi_{t+1}-f_{y}\left(  t+1\right)  \eta_{t+1}\\
& \left.  -\sum\limits_{i=1}^{n}f_{\widetilde{z}_{i}}\left(  t+1\right)
\widetilde{I}^{\ast}\zeta_{t+1}^{\ast}e_{i}-\delta_{\left(  t+1\right)
s}f_{u}\left(  t+1\right)  \varepsilon\Delta v|\mathcal{F}_{t}\right]  ,
\end{array}
\]
where%
\[
\widetilde{f}_{\mu}\left(  t\right)  =\int_{0}^{1}f_{\mu}(t,\bar{X}%
_{t}+\lambda\left(  X_{t}^{\varepsilon}-\bar{X}_{t}\right)  ,\bar{Y}%
_{t}+\lambda\left(  Y_{t}^{\varepsilon}-\bar{Y}_{t}\right)  ,\bar{Z}%
_{t}\widetilde{I}+\lambda\left(  Z_{t}^{\varepsilon}-\bar{Z}_{t}\right)
\widetilde{I},\bar{u}_{t}+\lambda\left(  u_{t}^{\varepsilon}-\bar{u}%
_{t}\right)  d\lambda
\]
for $\mu=x$, $y$, $z_{i}$ and $u$. Then,%
\[%
\begin{array}
[c]{cl}
& \mathbb{E}\left\vert Y_{t}^{\varepsilon}-\bar{Y}_{t}-\eta_{t}\right\vert
^{2}\\
\leq & C\mathbb{E}\left[  \left\vert Y_{t+1}^{\varepsilon}-\bar{Y}_{t+1}%
-\eta_{t+1}\right\vert ^{2}\right. \\
& +\left\vert \widetilde{f}_{x}\left(  t+1\right)  \left(  X_{t+1}%
^{\varepsilon}-\bar{X}_{t+1}-\xi_{t+1}\right)  \right\vert ^{2}+\left\vert
\left[  \widetilde{f}_{x}\left(  t+1\right)  -f_{x}\left(  t+1\right)
\right]  \xi_{t+1}\right\vert ^{2}\\
& +\left\vert \widetilde{f}_{y}\left(  t+1\right)  \left(  Y_{t+1}%
^{\varepsilon}-\bar{Y}_{t+1}-\eta_{t+1}\right)  \right\vert ^{2}+\left\vert
\left[  \widetilde{f}_{y}\left(  t+1\right)  -f_{y}\left(  t+1\right)
\right]  \eta_{t+1}\right\vert ^{2}\\
& +\sum\limits_{i=1}^{n}\left\vert \widetilde{f}_{\widetilde{z}_{i}}\left(
t+1\right)  \widetilde{I}^{\ast}\left(  Z_{t+1}^{\varepsilon}-\bar{Z}%
_{t+1}-\zeta_{t+1}\right)  ^{\ast}e_{i}\right\vert ^{2}+\sum\limits_{i=1}%
^{n}\left\vert \left[  \widetilde{f}_{\widetilde{z}_{i}}\left(  t+1\right)
-f_{\widetilde{z}_{i}}\left(  t+1\right)  \right]  \widetilde{I}^{\ast}%
\zeta_{t+1}^{\ast}e_{i}\right\vert ^{2}\\
& \left.  +\delta_{\left(  t+1\right)  s}\left\vert \left[  \widetilde{f}%
_{u}\left(  t+1\right)  -f_{u}\left(  t+1\right)  \right]  \varepsilon\Delta
v\right\vert ^{2}\right]
\end{array}
\]
and%
\[%
\begin{array}
[c]{cl}
& \mathbb{E}\left\Vert \left(  Z_{t}^{\varepsilon}-\bar{Z}_{t}-\zeta
_{t}\right)  \widetilde{I}\right\Vert ^{2}\\
\leq & C\mathbb{E}\left[  \left\vert Y_{t+1}^{\varepsilon}-\bar{Y}_{t+1}%
-\eta_{t+1}\right\vert ^{2}\right. \\
& +\left\vert \widetilde{f}_{x}\left(  t+1\right)  \left(  X_{t+1}%
^{\varepsilon}-\bar{X}_{t+1}-\xi_{t+1}\right)  \right\vert ^{2}+\left\vert
\left[  \widetilde{f}_{x}\left(  t+1\right)  -f_{x}\left(  t+1\right)
\right]  \xi_{t+1}\right\vert ^{2}\\
& +\left\vert \widetilde{f}_{y}\left(  t+1\right)  \left(  Y_{t+1}%
^{\varepsilon}-\bar{Y}_{t+1}-\eta_{t+1}\right)  \right\vert ^{2}+\left\vert
\left[  \widetilde{f}_{y}\left(  t+1\right)  -f_{y}\left(  t+1\right)
\right]  \eta_{t+1}\right\vert ^{2}\\
& +\sum\limits_{i=1}^{n}\left\vert \widetilde{f}_{\widetilde{z}_{i}}\left(
t+1\right)  \widetilde{I}^{\ast}\left(  Z_{t+1}^{\varepsilon}-\bar{Z}%
_{t+1}-\zeta_{t+1}\right)  ^{\ast}e_{i}\right\vert ^{2}+\sum\limits_{i=1}%
^{n}\left\vert \left[  \widetilde{f}_{\widetilde{z}_{i}}\left(  t+1\right)
-f_{\widetilde{z}_{i}}\left(  t+1\right)  \right]  \widetilde{I}^{\ast}%
\zeta_{t+1}^{\ast}e_{i}\right\vert ^{2}\\
& \left.  +\delta_{\left(  t+1\right)  s}\left\vert \left[  \widetilde{f}%
_{u}\left(  t+1\right)  -f_{u}\left(  t+1\right)  \right]  \varepsilon\Delta
v\right\vert ^{2}\right]  .
\end{array}
\]
Notice that $\widetilde{f}_{x}\left(  t\right)  -f_{x}\left(  t\right)
\rightarrow0,$ $\widetilde{f}_{y}\left(  t\right)  -f_{y}\left(  t\right)
\rightarrow0,$ $\widetilde{f}_{z_{i}}\left(  t\right)  -f_{z_{i}}\left(
t\right)  \rightarrow0,$ $\widetilde{f}_{u}\left(  t\right)  -f_{u}\left(
t\right)  \rightarrow0$ as $\varepsilon\rightarrow0$. We obtain that%
\[%
\begin{array}
[c]{rcc}%
\lim_{\varepsilon\rightarrow0}\frac{1}{\varepsilon^{2}}\mathbb{E}\left\vert
Y_{t}^{\varepsilon}-\bar{Y}_{t}-\eta_{t}\right\vert ^{2} & = & 0,\\
\lim_{\varepsilon\rightarrow0}\frac{1}{\varepsilon^{2}}\mathbb{E}\left\Vert
\left(  Z_{t}^{\varepsilon}-\bar{Z}_{t}-\zeta_{t}\right)  \widetilde{I}%
\right\Vert ^{2} & = & 0.
\end{array}
\]
This completes the proof.
\end{proof}

By Lemma \ref{c5_lemma_m_pc_x_approxi} and Lemma
\ref{c5_lemma_m_pc_yz_approxi}, we have%
\begin{align*}
&
\begin{array}
[c]{cc}
& J\left(  u^{\varepsilon}\left(  \cdot\right)  \right)  -J\left(  \bar
{u}\left(  \cdot\right)  \right)
\end{array}
\\
&
\begin{array}
[c]{cc}%
= & \mathbb{E}\sum_{t=0}^{T-1}\left[  \left\langle l_{x}\left(  t\right)
,\xi_{t}\right\rangle +\left\langle l_{y}\left(  t\right)  ,\eta
_{t}\right\rangle +\sum\limits_{i=1}^{n}\left\langle l_{\widetilde{z}_{i}%
}^{\ast}\left(  t\right)  ,\widetilde{I}\zeta_{t}^{\ast}e_{i}\right\rangle
+\delta_{ts}\left\langle l_{u}\left(  s\right)  ,\varepsilon\Delta
v\right\rangle \right] \\
& +\mathbb{E}\left\langle h_{x}\left(  \bar{X}_{T}\right)  ,\xi_{T}%
\right\rangle +o\left(  \varepsilon\right)  .
\end{array}
\end{align*}

Introducing the following adjoint equation:%

\begin{equation}
\left\{
\begin{array}
[c]{rcl}%
\Delta p_{t} & = & -b_{x}^{\ast}\left(  t+1\right)  p_{t+1}-\sum
\limits_{i=1}^{m}\sigma_{ix}\left(  t+1\right)  \mathbb{E}\left[
M_{t+2}M_{t+2}^{\ast}|\mathcal{F}_{t+1}\right]  q_{t+1}^{\ast}e_{i}\\
&  & +f_{x}^{\ast}\left(  t+1\right)  k_{t+1}+l_{x}\left(  t+1\right)
+q_{t}M_{t+1},\\
\Delta k_{t} & = & f_{y}^{\ast}\left(  t\right)  k_{t}+l_{y}\left(  t\right)
+\left[  \sum\limits_{i=1}^{n}e_{i}k_{t}^{\ast}f_{\widetilde{z}_{i}}\left(
t\right)  \widetilde{I}^{\ast}\left(  \mathbb{E}\left[  M_{t+1}M_{t+1}^{\ast
}|\mathcal{F}_{t}\right]  \right)  ^{\dag}\right. \\
& \multicolumn{1}{r}{} & \multicolumn{1}{r}{\left.  +\sum\limits_{i=1}%
^{n}e_{i}l_{\widetilde{z}_{i}}\left(  t\right)  \widetilde{I}^{\ast}\left(
\mathbb{E}\left[  M_{t+1}M_{t+1}^{\ast}|\mathcal{F}_{t}\right]  \right)
^{\dag}\right]  M_{t+1},}\\
p_{T} & = & -h_{x}\left(  \bar{X}_{T}\right)  ,\\
k_{0} & = & 0,
\end{array}
\right.  \label{c5_eq_m_pc_adjoint_eq}%
\end{equation}
where $\left(  \cdot\right)  ^{\dag}$ denotes the pseudoinverse of a matrix.

Obviously the forward equation in (\ref{c5_eq_m_pc_adjoint_eq}) admits a
unique solution $k\in\mathcal{M}\left(  0,T;\mathbb{R}^{n}\right)  $. Then,
based on the solution $k$, according to Theorem \ref{bsde_result_l}, it is
easy to check that the backward equation in (\ref{c5_eq_m_pc_adjoint_eq}) has
a unique solution $\left(  p,q\right)  \in\mathcal{M}\left(  0,T;\mathbb{R}%
^{m}\right)  \times\mathcal{M}\left(  0,T-1;\mathbb{R}^{m\times d}\right)  $.
So FBS$\Delta$E has a unique solution $\left(  p,q,k\right)  $.

We obtain the following maximum principle for the optimal control problem
(\ref{c5_eq_m_pc_state_eq})-(\ref{c5_eq_m_pc_cost_eq}).

Define the Hamiltonian function%
\[%
\begin{array}
[c]{l}%
H\left(  \omega,t,u,x,y,z,p,q,k\right) \\%
\begin{array}
[c]{cl}%
= & b^{\ast}\left(  \omega,t,x,u\right)  p+\sum\limits_{i=1}^{m}\sigma
_{i}\left(  \omega,t,x,u\right)  \mathbb{E}\left[  M_{t+1}M_{t+1}^{\ast
}|\mathcal{F}_{t}\right]  \left(  \omega\right)  q^{\ast}e_{i}\\
& -f^{\ast}\left(  \omega,t,x,y,\widetilde{z},u\right)  k-l\left(
\omega,t,x,y,\widetilde{z},u\right)  .
\end{array}
\end{array}
\]

\begin{theorem}
Suppose that Assumption \ref{c5_assumption_m_pc} holds. Let $\bar{u}$ be an
optimal control of the problem (\ref{c5_eq_m_pc_state_eq}%
)-(\ref{c5_eq_m_pc_cost_eq}), $\left(  \bar{X},\bar{Y},\bar{Z}\right)  $ be
the corresponding optimal trajectory and $\left(  p,q,k\right)  $ be the
solution to the adjoint equation (\ref{c5_eq_m_pc_adjoint_eq}). Then for any
$t\in\left\{  0,1,...,T\right\}  $, $v\in U_{t}$ and $\omega\in\Omega$, we
have%
\begin{equation}
\left\langle H_{u}\left(  \omega,t,\bar{u}_{t},\bar{X}_{t},\bar{Y}_{t},\bar
{Z}_{t}\widetilde{I},p_{t},q_{t},k_{t}\right)  ,v-\bar{u}_{t}\left(
\omega\right)  \right\rangle \leq0. \label{c5_eq_m_pc_MP_result}%
\end{equation}

\end{theorem}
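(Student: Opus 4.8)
The plan is to convert the optimality of $\bar u$ into a variational inequality and then eliminate the variation processes $\xi,\eta,\zeta$ by a discrete duality (summation-by-parts) argument built on the product rule $\Delta\langle X_t,Y_t\rangle=\langle X_{t+1},\Delta Y_t\rangle+\langle\Delta X_t,Y_t\rangle$. Since $\bar u$ is optimal, $J(u^{\varepsilon}(\cdot))-J(\bar u(\cdot))\geq 0$ for every $\varepsilon\in[0,1]$. The variation equations are linear in the inhomogeneity $\varepsilon\Delta v$, so $\xi_t=\varepsilon\hat\xi_t$, $\eta_t=\varepsilon\hat\eta_t$, $\zeta_t=\varepsilon\hat\zeta_t$, where $(\hat\xi,\hat\eta,\hat\zeta)$ solve the same variation equations with $\Delta v$ in place of $\varepsilon\Delta v$; dividing the first-order expansion of $J(u^{\varepsilon})-J(\bar u)$ displayed above by $\varepsilon>0$ and letting $\varepsilon\downarrow 0$ (this uses the boundedness and continuity of the derivatives in Assumption \ref{c5_assumption_m_pc} and Lemmas \ref{c5_lemma_m_pc_x_approxi}, \ref{c5_lemma_m_pc_yz_approxi}) yields
\[
0\leq\mathbb{E}\sum_{t=0}^{T-1}\Big[\langle l_x(t),\hat\xi_t\rangle+\langle l_y(t),\hat\eta_t\rangle+\sum_i\langle l_{\widetilde{z}_{i}}^{\ast}(t),\widetilde{I}\hat\zeta_t^{\ast}e_i\rangle+\delta_{ts}\langle l_u(s),\Delta v\rangle\Big]+\mathbb{E}\langle h_x(\bar X_T),\hat\xi_T\rangle .
\]

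Next I dualize against the adjoint triple $(p,q,k)$. Applying the product rule to $\langle\hat\xi_t,p_t\rangle$ and summing over $t=0,\dots,T-1$ gives, using $\hat\xi_0=0$ and $p_T=-h_x(\bar X_T)$, that $-\mathbb{E}\langle h_x(\bar X_T),\hat\xi_T\rangle=\mathbb{E}\sum_{t=0}^{T-1}(\langle\hat\xi_{t+1},\Delta p_t\rangle+\langle\Delta\hat\xi_t,p_t\rangle)$. Substituting the dynamics of $\hat\xi$ and of $p$ (first line of (\ref{c5_eq_m_pc_adjoint_eq})) and taking conditional expectations, every term linear in $M_{t+1}$ with $\mathcal{F}_t$-measurable coefficients vanishes, the $b_x$-terms and the $\sigma_{ix}(\cdot)\mathbb{E}[MM^{\ast}|\mathcal{F}]q^{\ast}e_i$-terms telescope (the latter against the quadratic cross-term $\mathbb{E}\langle\Delta\hat\xi_t,q_tM_{t+1}\rangle$), and the boundary contributions at $t=0$ and $t=T$ drop out because $\hat\xi_0=0$ and $b(T,\cdot)\equiv\sigma(T,\cdot)\equiv0$; what survives is an identity expressing $\mathbb{E}\sum_{t=0}^{T}\langle f_x^{\ast}(t)k_t,\hat\xi_t\rangle$ through $-\mathbb{E}\langle h_x(\bar X_T),\hat\xi_T\rangle$, $-\mathbb{E}\sum_{t=0}^{T-1}\langle l_x(t),\hat\xi_t\rangle$ and the single perturbation term $-\mathbb{E}\langle b_u^{\ast}(s)p_s+\sum_i\sigma_{iu}(s)\mathbb{E}[M_{s+1}M_{s+1}^{\ast}|\mathcal{F}_s]q_s^{\ast}e_i,\Delta v\rangle$. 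I would then do the same with $\langle k_t,\hat\eta_t\rangle$: summing $\Delta\langle k_t,\hat\eta_t\rangle=\langle k_{t+1},\Delta\hat\eta_t\rangle+\langle\Delta k_t,\hat\eta_t\rangle$ gives $0$ on the left since $k_0=0$ and $\hat\eta_T=0$, and substituting the dynamics of $k$ (second line of (\ref{c5_eq_m_pc_adjoint_eq})) and of $\hat\eta$, the $f_y$-terms telescope away, the $f_x$-terms reproduce exactly $-\mathbb{E}\sum_{t=0}^{T}\langle f_x^{\ast}(t)k_t,\hat\xi_t\rangle$, the perturbation contributes $-\mathbb{E}\langle f_u^{\ast}(s)k_s,\Delta v\rangle$, and the $\widetilde{z}$-part is produced by the pseudoinverse factors in the $k$-equation: the quadratic cross-term $\mathbb{E}\langle\Delta k_t,\hat\zeta_tM_{t+1}\rangle$ together with the $f_{\widetilde{z}}$ and $l_{\widetilde{z}}$ terms collapses to $\mathbb{E}\sum_{t=0}^{T-1}\sum_i\langle l_{\widetilde{z}_{i}}^{\ast}(t),\widetilde{I}\hat\zeta_t^{\ast}e_i\rangle$, using that $M_{t+1}=\widetilde{I}\mu_{t+1}$ for an $\mathbb{R}^{d-1}$-valued $\mu_{t+1}$, that $\mathbb{E}[M_{t+1}M_{t+1}^{\ast}|\mathcal{F}_t](\mathbb{E}[M_{t+1}M_{t+1}^{\ast}|\mathcal{F}_t])^{\dag}$ is the orthogonal projection onto the range of $\widetilde{I}$, and the comparison of $\|\cdot\,\widetilde{I}\|$ with $|\cdot\,M|$ from \cite{ji-liu}.

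Adding the two identities, the coupling term $\mathbb{E}\sum_{t=0}^{T}\langle f_x^{\ast}(t)k_t,\hat\xi_t\rangle$ cancels, and I am left with
\[
\mathbb{E}\sum_{t=0}^{T-1}\Big[\langle l_x(t),\hat\xi_t\rangle+\langle l_y(t),\hat\eta_t\rangle+\sum_i\langle l_{\widetilde{z}_{i}}^{\ast}(t),\widetilde{I}\hat\zeta_t^{\ast}e_i\rangle+\delta_{ts}\langle l_u(s),\Delta v\rangle\Big]+\mathbb{E}\langle h_x(\bar X_T),\hat\xi_T\rangle=-\mathbb{E}\langle H_u(s,\bar u_s,\bar X_s,\bar Y_s,\bar Z_s\widetilde{I},p_s,q_s,k_s),\Delta v\rangle ,
\]
the right side being $-\mathbb{E}\langle H_u,\Delta v\rangle$ precisely because $H_u=b_u^{\ast}p+\sum_i\sigma_{iu}\mathbb{E}[MM^{\ast}|\mathcal{F}]q^{\ast}e_i-f_u^{\ast}k-l_u$. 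Comparing with the inequality of the first paragraph gives $\mathbb{E}\langle H_u(s,\dots),\Delta v\rangle\leq0$ for every $\mathcal{F}_s$-measurable $\Delta v$ with $\bar u_s+\Delta v\in U_s$. Taking $\Delta v=\mathbf{1}_{A}(v-\bar u_s)$ for $A\in\mathcal{F}_s$ and $v\in U_s$ (admissible because $U_s$ is convex) and letting $A$ range over $\mathcal{F}_s$ gives $\langle H_u(s,\dots),v-\bar u_s\rangle\leq0$ $P$-a.s.; by Assumption \ref{general_assumption} this holds for every $\omega$, and by continuity of $H_u$ in $u$ together with a countable dense subset of $U_s$ it holds for all $v\in U_s$. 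Since $s\in\{0,1,\dots,T\}$ is arbitrary, this is (\ref{c5_eq_m_pc_MP_result}).

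The step I expect to be the main obstacle is the bookkeeping of the summation-by-parts above: one must verify that, with this asymmetric form of the product rule, every telescoped boundary term genuinely vanishes — which is exactly why $\xi_0=0$, $\eta_T=0$, $k_0=0$ are imposed and why the conventions $b(T,\cdot)=\sigma(T,\cdot)=0$, $f(0,\cdot)=0$, $l(T,\cdot)=0$ are used — and, more delicately, that the pseudoinverse terms in the $k$-adjoint equation correctly invert $\mathbb{E}[M_{t+1}M_{t+1}^{\ast}|\mathcal{F}_t]$, so that the $\widetilde{z}$-contributions match even though $Z$ (hence $\zeta$ and $q$) is determined only up to $\sim_M$-equivalence; this is the discrete, finite-state difficulty flagged in the introduction, and it is where the norm comparison of \cite{ji-liu} is essential. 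Keeping the signs consistent throughout — in particular around $p_T=-h_x$ and the index shift $t+1=s$ in the driver of $\hat\eta$ — is the other place requiring care.
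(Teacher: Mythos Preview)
Your approach is correct and coincides with the paper's: the same asymmetric product rule is applied to $\langle\xi_t,p_t\rangle$ and $\langle\eta_t,k_t\rangle$, the martingale cross-terms are removed by conditioning, the pseudoinverse identity $\mathbb{E}[M_{t+1}M_{t+1}^{\ast}|\mathcal{F}_t](\mathbb{E}[M_{t+1}M_{t+1}^{\ast}|\mathcal{F}_t])^{\dag}=I_d-\tfrac{1}{d}\mathbf{1}_{d\times d}$ (equivalently, the projection onto the range of $\widetilde{I}$, as you phrase it) reduces the $\zeta$-contributions to $\zeta_t\widetilde{I}$, and the telescoped sum yields the dual relation which, combined with the first-order expansion of $J$, gives $\mathbb{E}\langle H_u(s,\dots),\Delta v\rangle\leq0$. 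Your localization step via $\Delta v=\mathbf{1}_A(v-\bar u_s)$ is more explicit than the paper's, which simply asserts the pointwise conclusion.
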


\begin{proof}
For $t\in\left\{  0,1,...,T-1\right\}  $, we have%
\begin{equation}%
\begin{array}
[c]{rl}
& \Delta\left\langle \xi_{t},p_{t}\right\rangle \\
= & \left\langle \xi_{t+1},\Delta p_{t}\right\rangle +\left\langle \Delta
\xi_{t},p_{t}\right\rangle \\
= & \left\langle \xi_{t+1},-b_{x}^{\ast}\left(  t+1\right)  p_{t+1}%
-\sum\nolimits_{i=1}^{m}\sigma_{ix}\left(  t+1\right)  \mathbb{E}\left[
M_{t+2}M_{t+2}^{\ast}|\mathcal{F}_{t+1}\right]  q_{t+1}^{\ast}e_{i}%
\right\rangle \\
& +\left\langle \xi_{t+1},f_{x}^{\ast}\left(  t+1\right)  k_{t+1}+l_{x}\left(
t+1\right)  \right\rangle \\
& +\left\langle \sum_{i=1}^{m}e_{i}\cdot\left[  \xi_{t}^{\ast}\sigma
_{ix}\left(  t\right)  +\delta_{ts}\varepsilon\Delta v^{\ast}\sigma
_{iu}\left(  t\right)  \right]  M_{t+1},q_{t}M_{t+1}\right\rangle
+\left\langle b_{x}\left(  t\right)  \xi_{t}+\delta_{ts}b_{u}\left(  t\right)
\varepsilon\Delta v,p_{t}\right\rangle +\Phi_{t},
\end{array}
\label{product-rule-1}%
\end{equation}
where%
\[%
\begin{array}
[c]{cl}%
\Phi_{t}= & \left\langle \xi_{t}+b_{x}\left(  t\right)  \xi_{t}+\delta
_{ts}b_{u}\left(  t\right)  \varepsilon\Delta v,q_{t}M_{t+1}\right\rangle \\
& +\left\langle \sum_{i=1}^{m}e_{i}\cdot\left[  \xi_{t}^{\ast}\sigma
_{ix}\left(  t\right)  +\delta_{ts}\varepsilon\Delta v^{\ast}\sigma
_{iu}\left(  t\right)  \right]  M_{t+1},p_{t}\right\rangle .
\end{array}
\]
It is obvious that $\mathbb{E}\left[  \Phi_{t}\right]  =0$. We have%
\[%
\begin{array}
[c]{cl}
& \mathbb{E}\left\langle \sum_{i=1}^{m}e_{i}\cdot\xi_{t}^{\ast}\sigma
_{ix}\left(  t\right)  M_{t+1},q_{t}M_{t+1}\right\rangle \\
= & \mathbb{E}\sum_{i=1}^{m}\left[  M_{t+1}^{\ast}q_{t}^{\ast}e_{i}\xi
_{t}^{\ast}\sigma_{ix}\left(  t\right)  M_{t+1}\right] \\
= & \mathbb{E}\sum_{i=1}^{m}\left[  e_{i}^{\ast}q_{t}M_{t+1}M_{t+1}^{\ast
}\sigma_{ix}^{\ast}\left(  t\right)  \xi_{t}\right] \\
= & \mathbb{E}\sum_{i=1}^{m}\left\langle \xi_{t},\sigma_{ix}\left(  t\right)
M_{t+1}M_{t+1}^{\ast}q_{t}^{\ast}e_{i}\right\rangle ,
\end{array}
\]
and%
\[
\mathbb{E}\left\langle \sum_{i=1}^{m}e_{i}\cdot\delta_{ts}\varepsilon\Delta
v^{\ast}\sigma_{iu}\left(  t\right)  M_{t+1},q_{t}M_{t+1}\right\rangle
=\mathbb{E}\left[  \delta_{ts}\varepsilon\sum_{i=1}^{m}\left\langle \Delta
v,\sigma_{iu}\left(  t\right)  M_{t+1}M_{t+1}^{\ast}q_{t}^{\ast}%
e_{i}\right\rangle \right]  .
\]

Similarly, it can be shown that for $t\in\left\{  0,1,...,T-1\right\}  $,%
\[%
\begin{array}
[c]{rl}
& \Delta\left\langle \eta_{t},k_{t}\right\rangle \\
= & \left\langle -f_{x}\left(  t+1\right)  \xi_{t+1}-f_{y}\left(  t+1\right)
\eta_{t+1}-\sum_{i=1}^{n}f_{\widetilde{z}_{i}}\left(  t+1\right)
\widetilde{I}^{\ast}\zeta_{t+1}^{\ast}e_{i},k_{t+1}\right\rangle \\
& -\left\langle \delta_{\left(  t+1\right)  s}f_{u}\left(  t+1\right)
\varepsilon\Delta v,k_{t+1}\right\rangle +\left\langle \eta_{t},f_{y}^{\ast
}\left(  t\right)  k_{t}+l_{y}\left(  t\right)  \right\rangle \\
& +\left\langle \zeta_{t}M_{t+1},\sum_{i=1}^{n}e_{i}\left(  k_{t}^{\ast
}f_{\widetilde{z}_{i}}\left(  t\right)  +l_{\widetilde{z}_{i}}\left(
t\right)  \right)  \widetilde{I}^{\ast}\left(  \mathbb{E}\left[
M_{t+1}M_{t+1}^{\ast}|\mathcal{F}_{t}\right]  \right)  ^{\dag}M_{t+1}%
\right\rangle +\Psi_{t},
\end{array}
\]
where%
\[%
\begin{array}
[c]{cl}%
\Psi_{t}= & \left\langle \zeta_{t}M_{t+1},k_{t}+g_{y}^{\ast}\left(  t\right)
k_{t}+l_{y}\left(  t\right)  \right\rangle +\left\langle \eta_{t}%
,\sum\limits_{i=1}^{n}e_{i}k_{t}^{\ast}f_{\widetilde{z}_{i}}\left(  t\right)
\widetilde{I}^{\ast}\left(  \mathbb{E}\left[  M_{t+1}M_{t+1}^{\ast
}|\mathcal{F}_{t}\right]  \right)  ^{\dag}M_{t+1}\right\rangle \\
& +\left\langle \eta_{t},\sum\limits_{i=1}^{n}e_{i}l_{\widetilde{z}_{i}%
}\left(  t\right)  \widetilde{I}^{\ast}\left(  \mathbb{E}\left[
M_{t+1}M_{t+1}^{\ast}|\mathcal{F}_{t}\right]  \right)  ^{\dag}M_{t+1}%
\right\rangle .
\end{array}
\]
According to the result in \cite{ce08}, we know that $\forall\omega\in\Omega$,%
\[%
\begin{array}
[c]{cl}
& \mathbb{E}\left[  M_{t+1}M_{t+1}^{\ast}|\mathcal{F}_{t}\right]  \left(
\mathbb{E}\left[  M_{t+1}M_{t+1}^{\ast}|\mathcal{F}_{t}\right]  \right)
^{\dag}\left(  \omega\right) \\
= & \left(  \mathbb{E}\left[  M_{t+1}M_{t+1}^{\ast}|\mathcal{F}_{t}\right]
\right)  ^{\dag}\mathbb{E}\left[  M_{t+1}M_{t+1}^{\ast}|\mathcal{F}%
_{t}\right]  \left(  \omega\right) \\
= & I_{d}-\frac{1}{d}\mathbf{1}_{d\times d}.
\end{array}
\]
Then we can obtain%
\[%
\begin{array}
[c]{cl}
& \mathbb{E}\left\langle \zeta_{t}M_{t+1},e_{i}k_{t}^{\ast}f_{\widetilde{z}%
_{i}}\left(  t\right)  \widetilde{I}^{\ast}\left(  \mathbb{E}\left[
M_{t+1}M_{t+1}^{\ast}|\mathcal{F}_{t}\right]  \right)  ^{\dag}M_{t+1}%
\right\rangle \\
= & \mathbb{E}\left[  M_{t+1}^{\ast}\zeta_{t}^{\ast}e_{i}k_{t}^{\ast
}f_{\widetilde{z}_{i}}\left(  t\right)  \widetilde{I}^{\ast}\left(
\mathbb{E}\left[  M_{t+1}M_{t+1}^{\ast}|\mathcal{F}_{t}\right]  \right)
^{\dag}M_{t+1}\right] \\
= & \mathbb{E}\left[  e_{i}^{\ast}\zeta_{t}M_{t+1}M_{t+1}^{\ast}\left(
\mathbb{E}\left[  M_{t+1}M_{t+1}^{\ast}|\mathcal{F}_{t}\right]  \right)
^{\dag}\widetilde{I}f_{\widetilde{z}_{i}}^{\ast}\left(  t\right)  k_{t}\right]
\\
= & \mathbb{E}\left[  e_{i}^{\ast}\zeta_{t}\left(  I_{d}-\frac{1}{d}%
\mathbf{1}_{d\times d}\right)  \widetilde{I}f_{\widetilde{z}_{i}}^{\ast
}\left(  t\right)  k_{t}\right] \\
= & \mathbb{E}\left[  e_{i}^{\ast}\zeta_{t}\widetilde{I}f_{\widetilde{z}_{i}%
}^{\ast}\left(  t\right)  k_{t}\right] \\
= & \mathbb{E}\left\langle f_{\widetilde{z}_{i}}\left(  t\right)
\widetilde{I}^{\ast}\zeta_{t}^{\ast}e_{i},k_{t}\right\rangle .
\end{array}
\]
Similarly,%
\[%
\begin{array}
[c]{cl}
& \mathbb{E}\left\langle \zeta_{t}M_{t+1},e_{i}l_{\widetilde{z}_{i}}\left(
t\right)  \widetilde{I}\left(  \mathbb{E}\left[  M_{t+1}M_{t+1}^{\ast
}|\mathcal{F}_{t}\right]  \right)  ^{\dag}M_{t+1}\right\rangle =\mathbb{E}%
\left[  l_{\widetilde{z}_{i}}\left(  t\right)  \widetilde{I}^{\ast}\zeta
_{t}^{\ast}e_{i}\right]  .
\end{array}
\]
Thus%
\begin{equation}%
\begin{array}
[c]{cl}
& \mathbb{E}\Delta\left(  \left\langle \xi_{t},p_{t}\right\rangle
+\left\langle \eta_{t},k_{t}\right\rangle \right) \\
= & \mathbb{E}\left[  \left\langle -b_{x}\left(  t+1\right)  \xi_{t+1}%
,p_{t+1}\right\rangle +\left\langle b_{x}\left(  t\right)  \xi_{t}%
,p_{t}\right\rangle \right. \\
& -\sum\limits_{i=1}^{m}\left\langle e_{i}M_{t+2}^{\ast}\sigma_{ix}^{\ast
}\left(  t+1\right)  \xi_{t+1},q_{t+1}M_{t+2}\right\rangle +\sum
\limits_{i=1}^{m}\left\langle e_{i}\xi_{t}^{\ast}\sigma_{ix}\left(  t\right)
M_{t+1},q_{t}M_{t+1}\right\rangle \\
& -\left\langle f_{y}\left(  t+1\right)  \eta_{t+1},k_{t+1}\right\rangle
+\left\langle f_{y}\left(  t\right)  \eta_{t},k_{t}\right\rangle \\
& -\sum\limits_{i=1}^{n}\left\langle f_{\widetilde{z}_{i}}\left(  t+1\right)
\widetilde{I}^{\ast}\zeta_{t+1}^{\ast}e_{i},k_{t+1}\right\rangle
+\sum\limits_{i=1}^{n}\left\langle f_{\widetilde{z}_{i}}\left(  t\right)
\widetilde{I}^{\ast}\zeta_{t}^{\ast}e_{i},k_{t}\right\rangle \\
& +\left\langle l_{x}\left(  t+1\right)  ,\xi_{t+1}\right\rangle +\left\langle
\eta_{t},l_{y}\left(  t\right)  \right\rangle +\sum\limits_{i=1}%
^{n}\left\langle l_{\widetilde{z}_{i}}^{\ast}\left(  t\right)  ,\widetilde{I}%
^{\ast}\zeta_{t}^{\ast}e_{i}\right\rangle \\
& +\varepsilon\left\langle \delta_{ts}b_{u}\left(  t\right)  \Delta
v,p_{t}\right\rangle +\delta_{ts}\varepsilon\sum\limits_{i=1}^{m}\left\langle
M_{t+1}M_{t+1}^{\ast}\sigma_{iu}^{\ast}\left(  t\right)  \Delta v,q_{t}^{\ast
}e_{i}\right\rangle \\
& \left.  -\varepsilon\left\langle \delta_{\left(  t+1\right)  s}f_{u}\left(
t+1\right)  \Delta v,k_{t+1}\right\rangle \right]  .
\end{array}
\label{product-rule-2}%
\end{equation}
Therefore,%
\begin{equation}%
\begin{array}
[c]{cl}
& -\mathbb{E}\left\langle h_{x}\left(  \bar{X}_{T}\right)  ,\xi_{T}%
\right\rangle \\
= & \mathbb{E}\left[  \left\langle \xi_{T},p_{T}\right\rangle +\left\langle
\eta_{T},k_{T}\right\rangle -\left\langle \xi_{0},p_{0}\right\rangle
-\left\langle \eta_{0},k_{0}\right\rangle \right] \\
= & \sum\limits_{t=0}^{T-1}\mathbb{E}\Delta\left(  \left\langle \xi_{t}%
,p_{t}\right\rangle +\left\langle \eta_{t},k_{t}\right\rangle \right) \\
= & \mathbb{E}\left[  \left\langle b_{x}\left(  0\right)  \xi_{0}%
,p_{0}\right\rangle +\sum\limits_{i=1}^{m}\left\langle e_{i}\xi_{0}^{\ast
}\sigma_{ix}\left(  0\right)  M_{1},q_{0}M_{1}\right\rangle +\left\langle
f_{y}\left(  0\right)  \eta_{0},k_{0}\right\rangle +\sum\limits_{i=1}%
^{n}\left\langle f_{\widetilde{z}_{i}}\left(  0\right)  \widetilde{I}^{\ast
}\zeta_{0}^{\ast}e_{i},k_{0}\right\rangle \right] \\
& +\sum\limits_{t=0}^{T-1}\mathbb{E}\left[  \left\langle l_{x}\left(
t\right)  ,\xi_{t}\right\rangle +\left\langle l_{y}\left(  t\right)  ,\eta
_{t}\right\rangle +\sum\limits_{i=1}^{n}\left\langle l_{\widetilde{z}_{i}%
}^{\ast}\left(  t\right)  ,\widetilde{I}^{\ast}\zeta_{t}^{\ast}e_{i}%
\right\rangle \right] \\
& +\sum\limits_{t=0}^{T}\delta_{ts}\varepsilon\mathbb{E}\left[  \left\langle
b_{u}^{\ast}\left(  t\right)  p_{t},\Delta v\right\rangle +\sum\limits_{i=1}%
^{m}\left\langle \sigma_{iu}\left(  t\right)  M_{t+1}M_{t+1}^{\ast}q_{t}%
^{\ast}e_{i},\Delta v\right\rangle -\left\langle g_{u}^{\ast}\left(  t\right)
k_{t},\Delta v\right\rangle \right]  .
\end{array}
\label{rearrangement}%
\end{equation}
Since $\xi_{0}=0$ and $k_{0}=0$, we deduce%
\begin{equation}%
\begin{array}
[c]{cl}
& \mathbb{E}\sum\limits_{t=0}^{T-1}\left[  \left\langle l_{x}\left(  t\right)
,\xi_{t}\right\rangle +\left\langle l_{y}\left(  t\right)  ,\eta
_{t}\right\rangle +\sum\limits_{i=1}^{n}\left\langle l_{\widetilde{z}_{i}%
}^{\ast}\left(  t\right)  ,\widetilde{I}^{\ast}\zeta_{t}^{\ast}e_{i}%
\right\rangle \right]  +\mathbb{E}\left\langle h_{x}\left(  \bar{X}%
_{T}\right)  ,\xi_{T}\right\rangle \\
= & -\varepsilon\mathbb{E}\left[  \left\langle b_{u}^{\ast}\left(  s\right)
p_{s}+\sum\limits_{i=1}^{m}\sigma_{iu}\left(  s\right)  M_{s+1}M_{s+1}^{\ast
}q_{s}^{\ast}e_{i}-g_{u}^{\ast}\left(  s\right)  k_{s},\Delta v\right\rangle
\right]  .
\end{array}
\label{dual-relation}%
\end{equation}
By $\lim_{\varepsilon\rightarrow0}\frac{1}{\varepsilon}\left[  J\left(
u^{\varepsilon}\left(  \cdot\right)  \right)  -J\left(  \bar{u}\left(
\cdot\right)  \right)  \right]  \geq0$, we obtain%
\[
\mathbb{E}\left[  \left\langle b_{u}^{\ast}\left(  s\right)  p_{s}+\sum
_{i=1}^{m}\sigma_{iu}\left(  s\right)  M_{s+1}M_{s+1}^{\ast}q_{s}^{\ast}%
e_{i}-g_{u}^{\ast}\left(  s\right)  k_{s}-l_{u}\left(  s\right)  ,\Delta
v\right\rangle \right]  \leq0.
\]
It is easy to obtain equation (\ref{c5_eq_m_pc_MP_result}) since $s$ is taking
arbitrarily. This completes the proof.
\end{proof}

\begin{remark}
\label{re-product-rule}In the introduction we point out that we need a
reasonable representation of the product rule. When we calculate
$\Delta\left\langle \xi_{t},p_{t}\right\rangle $ in (\ref{product-rule-1}),
$\Delta\left\langle \xi_{t},p_{t}\right\rangle $ is represented as
$\left\langle \xi_{t+1},\cdot\cdot\cdot\right\rangle +\cdot\cdot\cdot$.
Combining the formulation of the BS$\Delta$E mentioned in the introduction,
this representation will lead to the terms such as $\left\langle \square
_{t},\Diamond_{t}\right\rangle -\left\langle \square_{t+1},\Diamond
_{t+1}\right\rangle $ in (\ref{product-rule-2}). By summing and rearranging
these terms in (\ref{rearrangement}), we obtain the dual relation
(\ref{dual-relation}).
\end{remark}

\section{Maximum principle for the fully coupled FBS$\Delta$E system}

In this section we consider the control problem (\ref{f_c_c_2_state_eq}%
)-(\ref{f_c_c_2_cost_eq}). Without loss of generality, we only consider the
one-dimensional case for $X$ and $Y$. Let $\bar{u}=\left\{  \bar{u}%
_{t}\right\}  _{t=0}^{T}$ be the optimal control for the control problem
(\ref{f_c_c_2_state_eq})-(\ref{f_c_c_2_cost_eq}) and $\left(  \bar{X},\bar
{Y},\bar{Z}\right)  $ be the corresponding optimal trajectory. Note that the
existence and uniqueness of $\left(  \bar{X},\bar{Y},\bar{Z}\right)  $ is
guaranteed by the results in \cite{ji-liu}. The perturbed control
$u^{\varepsilon}$ is the same as (\ref{perturb-control}) and we denote by
$\left(  X^{\varepsilon},Y^{\varepsilon},Z^{\varepsilon}\right)  $\thinspace
the corresponding trajectory.

Let%
\[
\widehat{X}_{t}=X_{t}^{\varepsilon}-\bar{X}_{t},\text{ }\widehat{Y}_{t}%
=Y_{t}^{\varepsilon}-\bar{Y}_{t},\text{ }\widehat{Z}_{t}=Z_{t}^{\varepsilon
}-\bar{Z}_{t}.
\]

Using the similar analysis and similar notations in section 3, we have%
\begin{equation}
\left\{
\begin{array}
[c]{rcl}%
\Delta\widehat{X}_{t} & = & b^{\varepsilon}\left(  t\right)  -\overline
{b}\left(  t\right)  +\left(  \sigma^{\varepsilon}\left(  t\right)
-\overline{\sigma}\left(  t\right)  \right)  M_{t+1},\\
\Delta\widehat{Y}_{t} & = & -f^{\varepsilon}\left(  t+1\right)  +\overline
{f}\left(  t+1\right)  +\widehat{Z}_{t}M_{t+1},\\
\widehat{X}_{0} & = & 0,\\
\widehat{Y}_{T} & = & 0.
\end{array}
\right.  \label{f_c_c_2_difference_1}%
\end{equation}

\begin{lemma}
\label{f_c_c_2_xyz_approxi}Under Assumption \ref{c5_assumption_m_pc} and
Assumption \ref{f_c_c_2_control_assumption}, we have%
\begin{equation}
\mathbb{E}\left(  \sum_{t=0}^{T}\left\vert \widehat{X}_{t}\right\vert
^{2}+\sum_{t=0}^{T}\left\vert \widehat{Y}_{t}\right\vert ^{2}+\sum_{t=0}%
^{T-1}\left\vert \widehat{Z}_{t}\widetilde{I}\right\vert ^{2}\right)  \leq
C\varepsilon^{2}\mathbb{E}\left\vert \Delta v\right\vert ^{2}.
\label{f_c_c_2_x_estimate_1}%
\end{equation}

\end{lemma}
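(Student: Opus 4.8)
The plan is to run, in the present discrete-time finite-state setting, the classical monotonicity a priori estimate for fully coupled forward--backward systems: since the forward and backward variations can no longer be controlled separately as in Section 3, one pairs them against each other, the engine being the monotonicity in Assumption \ref{f_c_c_2_control_assumption} together with the product rule $\Delta\langle X_t,Y_t\rangle=\langle X_{t+1},\Delta Y_t\rangle+\langle \Delta X_t,Y_t\rangle$.

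First I would apply this product rule to $\langle \widehat{X}_t,\widehat{Y}_t\rangle$ and sum over $t=0,\dots,T-1$. Because $\widehat{X}_0=0$ and $\widehat{Y}_T=0$ the telescoping sum is zero, so after inserting the dynamics (\ref{f_c_c_2_difference_1}) and taking expectations --- every term carrying a single factor $M_{t+1}$ against an $\mathcal{F}_t$-measurable quantity dropping out --- one is left with
\[
0=\mathbb{E}\sum_{t=0}^{T-1}\Big[-\big\langle \widehat{X}_{t+1},f^{\varepsilon}(t+1)-\overline{f}(t+1)\big\rangle+\big\langle b^{\varepsilon}(t)-\overline{b}(t),\widehat{Y}_t\big\rangle+\big\langle (\sigma^{\varepsilon}(t)-\overline{\sigma}(t))\,\mathbb{E}[M_{t+1}M_{t+1}^{\ast}|\mathcal{F}_t],\widehat{Z}_t\big\rangle\Big].
\]
The last bracket is the only surviving quadratic-in-$M$ contribution: writing $\widehat{X}_{t+1}=\widehat{X}_t+\Delta\widehat{X}_t$ inside $\langle \widehat{X}_{t+1},\widehat{Z}_tM_{t+1}\rangle$, the cross term $\langle(\sigma^{\varepsilon}(t)-\overline{\sigma}(t))M_{t+1},\widehat{Z}_tM_{t+1}\rangle$ has conditional expectation $\langle(\sigma^{\varepsilon}(t)-\overline{\sigma}(t))\mathbb{E}[M_{t+1}M_{t+1}^{\ast}|\mathcal{F}_t],\widehat{Z}_t\rangle$, i.e.\ precisely the $\sigma$-block of the operator $A$ from Section 2. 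This is exactly why the given forms of the product rule and of the BS$\Delta$E were adopted, cf.\ Remark \ref{re-product-rule}.

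Next I would re-index the $f$-sum ($t+1\mapsto t$) and extend all sums to $t\in\{0,\dots,T\}$ using the conventions $f(0,\cdot)\equiv0$ and $b(T,\cdot)\equiv\sigma(T,\cdot)\equiv0$; with $\lambda_t^{\varepsilon}=(X_t^{\varepsilon},Y_t^{\varepsilon},Z_t^{\varepsilon})$ and $\overline{\lambda}_t=(\bar{X}_t,\bar{Y}_t,\bar{Z}_t)$, the bracket at a given $t$ becomes $\langle A(t,\lambda_t^{\varepsilon};u_t^{\varepsilon})-A(t,\overline{\lambda}_t;u_t^{\varepsilon}),\lambda_t^{\varepsilon}-\overline{\lambda}_t\rangle$, plus --- at the single time $t=s$ only --- a remainder $R_s$ produced by replacing $u_s^{\varepsilon}$ back with $\bar u_s$ inside $f$, $b$, $\sigma$. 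At $t=0$ and $t=T$ the degenerate forms of $A$ coincide with the boundary cases of Assumption \ref{f_c_c_2_control_assumption} because $\widehat{X}_0$ and $\widehat{Y}_T$ vanish. Applying the three monotonicity inequalities term by term, $\langle A(t,\lambda_t^{\varepsilon};u_t^{\varepsilon})-A(t,\overline{\lambda}_t;u_t^{\varepsilon}),\lambda_t^{\varepsilon}-\overline{\lambda}_t\rangle\le-\alpha(|\widehat{X}_t|^2+|\widehat{Y}_t|^2+|\widehat{Z}_t\widetilde{I}|^2)$, so
\[
\alpha\,\mathbb{E}\Big[\sum_{t=0}^{T}|\widehat{X}_t|^2+\sum_{t=0}^{T}|\widehat{Y}_t|^2+\sum_{t=0}^{T-1}|\widehat{Z}_t\widetilde{I}|^2\Big]\le\mathbb{E}[R_s].
\]

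Finally I would estimate $R_s$. Its three pieces pair a quantity of size $O(\varepsilon|\Delta v|)$ --- obtained from the mean-value expansions of $f$, $b$, $\sigma$ in $u$ and the uniform boundedness of $f_u$, $b_u$, $\sigma_u\widetilde{I}$ in Assumption \ref{c5_assumption_m_pc} --- against $\widehat{X}_s$, $\widehat{Y}_s$, $\widehat{Z}_s$ respectively; for the $\sigma$-piece one rewrites $\langle(\cdot)\,\mathbb{E}[M_{s+1}M_{s+1}^{\ast}|\mathcal{F}_s],\widehat{Z}_s\rangle$ as $\mathbb{E}[(\cdot\,M_{s+1})(\widehat{Z}_sM_{s+1})]$, applies Cauchy--Schwarz, and invokes Proposition 2.4 in \cite{ji-liu} (the equivalence between $\mathbb{E}|\widehat{Z}_s\widetilde{I}|^2$ and $\mathbb{E}|\widehat{Z}_sM_{s+1}|^2$) so the factor $\mathbb{E}[M_{s+1}M_{s+1}^{\ast}|\mathcal{F}_s]$ causes no loss. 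Young's inequality then gives $\mathbb{E}[R_s]\le\tfrac{\alpha}{2}\,\mathbb{E}[|\widehat{X}_s|^2+|\widehat{Y}_s|^2+|\widehat{Z}_s\widetilde{I}|^2]+C\varepsilon^2\mathbb{E}|\Delta v|^2$, and since the first term on the right is dominated by the left-hand side of the previous display, one obtains (\ref{f_c_c_2_x_estimate_1}). (The left-hand side is finite, so this rearrangement is legitimate, by the well-posedness of (\ref{f_c_c_2_state_eq}) proved in \cite{ji-liu}.) I expect the middle step to be the real obstacle: identifying the surviving martingale cross-term with the $\sigma$-block of $A$, and checking that the degenerate $t=0$ and $t=T$ monotonicity conditions line up with the boundary terms once the ranges of summation are extended; the $Z$-norm subtlety is confined to $R_s$ and handled by Proposition 2.4.
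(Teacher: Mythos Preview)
Your proposal is correct and follows essentially the same approach as the paper: product rule on $\langle\widehat{X}_t,\widehat{Y}_t\rangle$, telescoping via the boundary conditions, identification of the $A$-operator blocks (with the conditional quadratic variation producing the $\sigma$-block), application of the monotonicity assumption at interior times and the degenerate versions at $t=0,T$, and then Young's inequality together with Proposition~2.4 in \cite{ji-liu} to absorb the time-$s$ remainder. The paper's argument is organized identically, writing your $R_s$ as the $\widetilde{f}^{\varepsilon}(s)$, $\widetilde{b}^{\varepsilon}(s)$, $\widetilde{\sigma}^{\varepsilon}(s)$ terms and arriving at the same absorption with coefficient $\tfrac{\alpha}{2}$.
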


\begin{proof}
By (\ref{f_c_c_2_difference_1}),%
\[%
\begin{array}
[c]{cl}
& \mathbb{E}\sum\limits_{t=0}^{T-1}\Delta\left\langle \widehat{X}%
_{t},\widehat{Y}_{t}\right\rangle =\mathbb{E}\left\langle \widehat{X}%
_{T},\widehat{Y}_{T}\right\rangle -\mathbb{E}\left\langle \widehat{X}%
_{0},\widehat{Y}_{0}\right\rangle =0\\
= & \mathbb{E}\sum\limits_{t=0}^{T}\left[  \left\langle \widehat{X}%
_{t},-f^{\varepsilon}\left(  t\right)  +\overline{f}\left(  t\right)
\right\rangle +\left\langle \widehat{Y}_{t},b^{\varepsilon}\left(  t\right)
-\overline{b}\left(  t\right)  \right\rangle +\left\langle \widehat{Z}%
_{t},\sigma^{\varepsilon}\left(  t\right)  -\overline{\sigma}\left(  t\right)
\right\rangle \right] \\
= & \mathbb{E}\sum\limits_{t=1}^{T-1}\left\langle A\left(  t,\lambda
_{t}^{\varepsilon};u_{t}^{\varepsilon}\right)  -A\left(  t,\overline{\lambda
}_{t};u_{t}^{\varepsilon}\right)  ,\widehat{\lambda}_{t}\right\rangle
+\mathbb{E}\left\langle \widehat{X}_{T},-f^{\varepsilon}\left(  T\right)
+\widetilde{f}^{\varepsilon}\left(  T\right)  \right\rangle \\
& +\mathbb{E}\left\langle \widehat{Y}_{0},b^{\varepsilon}\left(  0\right)
-\widetilde{b}^{\varepsilon}\left(  0\right)  \right\rangle +\mathbb{E}%
\left\langle \widehat{Z}_{0},\left(  \sigma^{\varepsilon}\left(  0\right)
-\widetilde{\sigma}^{\varepsilon}\left(  0\right)  \right)  \mathbb{E}\left[
M_{1}M_{1}^{\ast}|\mathcal{F}_{0}\right]  \right\rangle \\
& +\mathbb{E}\sum\limits_{t=0}^{T}\left[  \left\langle \widehat{X}%
_{t},-\widetilde{f}^{\varepsilon}\left(  t\right)  +\overline{f}\left(
t\right)  \right\rangle +\left\langle \widehat{Y}_{t},\widetilde{b}%
^{\varepsilon}\left(  t\right)  -\overline{b}\left(  t\right)  \right\rangle
\right] \\
& +\mathbb{E}\sum\limits_{t=0}^{T}\left\langle \widehat{Z}_{t},\left(
\widetilde{\sigma}^{\varepsilon}\left(  t\right)  -\overline{\sigma}\left(
t\right)  \right)  \mathbb{E}\left[  M_{t+1}M_{t+1}^{\ast}|\mathcal{F}%
_{t}\right]  \right\rangle \\
= & \mathbb{E}\sum\limits_{t=1}^{T-1}\left\langle A\left(  t,\lambda
_{t}^{\varepsilon};u_{t}^{\varepsilon}\right)  -A\left(  t,\overline{\lambda
}_{t};u_{t}^{\varepsilon}\right)  ,\widehat{\lambda}_{t}\right\rangle
+\mathbb{E}\left\langle \widehat{X}_{T},-f^{\varepsilon}\left(  T\right)
+\widetilde{f}^{\varepsilon}\left(  T\right)  \right\rangle \\
& +\mathbb{E}\left\langle \widehat{Y}_{0},b^{\varepsilon}\left(  0\right)
-\widetilde{b}^{\varepsilon}\left(  0\right)  \right\rangle +\mathbb{E}%
\left\langle \widehat{Z}_{0},\left(  \sigma^{\varepsilon}\left(  0\right)
-\widetilde{\sigma}^{\varepsilon}\left(  0\right)  \right)  \mathbb{E}\left[
M_{1}M_{1}^{\ast}|\mathcal{F}_{0}\right]  \right\rangle \\
& +\mathbb{E}\left[  \left\langle \widehat{X}_{s},-\widetilde{f}^{\varepsilon
}\left(  s\right)  +\overline{f}\left(  s\right)  \right\rangle +\left\langle
\widehat{Y}_{s},\widetilde{b}^{\varepsilon}\left(  s\right)  -\overline
{b}\left(  s\right)  \right\rangle \right] \\
& +\mathbb{E}\left\langle \widehat{Z}_{s}M_{s+1},\left(  \widetilde{\sigma
}^{\varepsilon}\left(  s\right)  -\overline{\sigma}\left(  s\right)  \right)
M_{s+1}\right\rangle .
\end{array}
\]
By the monotone condition, we obtain%
\begin{equation}%
\begin{array}
[c]{cl}
& \mathbb{E}\left[  \left\langle \widehat{X}_{s},-\widetilde{f}^{\varepsilon
}\left(  s\right)  +\overline{f}\left(  s\right)  \right\rangle +\left\langle
\widehat{Y}_{s},\widetilde{b}^{\varepsilon}\left(  s\right)  -\overline
{b}\left(  s\right)  \right\rangle \right] \\
& +\mathbb{E}\left[  \left\langle \widehat{Z}_{s}M_{s+1},\left(
\widetilde{\sigma}^{\varepsilon}\left(  s\right)  -\overline{\sigma}\left(
s\right)  \right)  M_{s+1}\right\rangle \right] \\
\geq & \alpha\mathbb{E}\left[  \sum\limits_{t=0}^{T}\left\vert \widehat{X}%
_{t}\right\vert ^{2}+\sum\limits_{t=0}^{T}\left\vert \widehat{Y}%
_{t}\right\vert ^{2}+\sum\limits_{t=0}^{T-1}\left\vert \widehat{Z}%
_{t}\widetilde{I}_{t}\right\vert ^{2}\right]  .
\end{array}
\label{f_c_c_2_difference_2}%
\end{equation}
On the other hand,%
\begin{align*}
&  \mathbb{E}\left\langle \widehat{X}_{s},-\widetilde{f}^{\varepsilon}\left(
s\right)  +\overline{f}\left(  s\right)  \right\rangle +\mathbb{E}\left\langle
\widehat{Y}_{s},\widetilde{b}^{\varepsilon}\left(  s\right)  -\overline
{b}\left(  s\right)  \right\rangle \\
&  \leq\frac{\alpha}{2}\mathbb{E}\left\vert \widehat{X}_{s}\right\vert
^{2}+\frac{1}{2\alpha}\mathbb{E}\left\vert \overline{f}\left(  s\right)
-\widetilde{f}^{\varepsilon}\left(  s\right)  \right\vert ^{2}+\frac{\alpha
}{2}\mathbb{E}\left\vert \widehat{Y}_{s}\right\vert ^{2}+\frac{1}{2\alpha
}\mathbb{E}\left\vert \widetilde{b}^{\varepsilon}\left(  s\right)
-\overline{b}\left(  s\right)  \right\vert ^{2}\\
&  \leq\frac{\alpha}{2}\mathbb{E}\left\vert \widehat{X}_{s}\right\vert
^{2}+\frac{\alpha}{2}\mathbb{E}\left\vert \widehat{Y}_{s}\right\vert
^{2}+\frac{C}{2\alpha}\varepsilon^{2}\mathbb{E}\left\vert \Delta v\right\vert
^{2}.
\end{align*}
and%
\begin{align*}
&  \mathbb{E}\left\langle \widehat{Z}_{s}M_{s+1},\left(  \widetilde{\sigma
}^{\varepsilon}\left(  s\right)  -\overline{\sigma}\left(  s\right)  \right)
M_{s+1}\right\rangle \\
&  \leq\frac{\alpha}{2C}\mathbb{E}\left\vert \widehat{Z}_{s}M_{s+1}\right\vert
^{2}+\frac{C}{2\alpha}\mathbb{E}\left\vert \left(  \widetilde{\sigma
}^{\varepsilon}\left(  s\right)  -\overline{\sigma}\left(  s\right)  \right)
M_{s+1}\right\vert ^{2}\\
&  \leq\frac{\alpha}{2}\mathbb{E}\left\vert \widehat{Z}_{s}\widetilde{I}%
\right\vert ^{2}+\frac{C}{2\alpha}\varepsilon^{2}\mathbb{E}\left\vert \Delta
v\right\vert ^{2}.
\end{align*}
Thus%
\begin{equation}%
\begin{array}
[c]{cl}
& \mathbb{E}\left[  \left\langle \widehat{X}_{s},-\widetilde{f}^{\varepsilon
}\left(  s\right)  +\overline{f}\left(  s\right)  \right\rangle +\left\langle
\widehat{Y}_{s},\widetilde{b}^{\varepsilon}\left(  s\right)  -\overline
{b}\left(  s\right)  \right\rangle \right] \\
& +\mathbb{E}\left\langle \widehat{Z}_{s}M_{s+1},\left(  \widetilde{\sigma
}^{\varepsilon}\left(  s\right)  -\overline{\sigma}\left(  s\right)  \right)
M_{s+1}\right\rangle \\
\leq & \frac{\alpha}{2}\mathbb{E}\left[  \left\vert \widehat{X}_{s}\right\vert
^{2}+\left\vert \widehat{Y}_{s}\right\vert ^{2}+\left\vert \widehat{Z}%
_{s}\widetilde{I}\right\vert ^{2}\right]  +C\varepsilon^{2}\mathbb{E}%
\left\vert \Delta v\right\vert ^{2}.
\end{array}
\label{f_c_c_2_difference_3}%
\end{equation}
Combining (\ref{f_c_c_2_difference_2}) and (\ref{f_c_c_2_difference_3}), we
have%
\[
\mathbb{E}\left[  \sum_{t=0}^{T}\left\vert \widehat{X}_{t}\right\vert
^{2}+\sum_{t=0}^{T}\left\vert \widehat{Y}_{t}\right\vert ^{2}+\sum_{t=0}%
^{T-1}\left\vert \widehat{Z}_{t}\widetilde{I}\right\vert ^{2}\right]  \leq
C\varepsilon^{2}\mathbb{E}\left\vert \Delta v\right\vert ^{2}.
\]
This completes the proof.
\end{proof}

Next we introduce the following variational equation:%
\begin{equation}
\left\{
\begin{array}
[c]{rcl}%
\Delta\xi_{t} & = & b_{x}\left(  t\right)  \xi_{t}+b_{y}\left(  t\right)
\eta_{t}+\zeta_{t}\widetilde{I}b_{\widetilde{z}}\left(  t\right)  +\delta
_{ts}b_{u}\left(  t\right)  \varepsilon\Delta v\\
&  & +\left[  \sigma_{x}\left(  t\right)  \xi_{t}+\sigma_{y}\left(  t\right)
\eta_{t}+\zeta_{t}\widetilde{I}\sigma_{\widetilde{z}}\left(  t\right)
+\delta_{ts}\varepsilon\left(  \Delta v\right)  ^{\ast}\sigma_{u}\left(
t\right)  \right]  M_{t+1},\\
\Delta\eta_{t} & = & -f_{x}\left(  t+1\right)  \xi_{t+1}-f_{y}\left(
t+1\right)  \eta_{t+1}-\zeta_{t+1}\widetilde{I}f_{\widetilde{z}}\left(
t+1\right) \\
&  & -\delta_{\left(  t+1\right)  s}f_{u}\left(  t+1\right)  \varepsilon\Delta
v+\zeta_{t}M_{t+1},\\
\xi_{0} & = & 0,\\
\eta_{T} & = & 0.
\end{array}
\right.  \label{f_c_c_2_variation}%
\end{equation}

By Assumption \ref{c5_assumption_m_pc} and Assumption
\ref{f_c_c_2_control_assumption}, when $t\in\left\{  1,...,T-1\right\}  $,%
\begin{equation}
\left(
\begin{array}
[c]{ccc}%
1 & 0 & 0\\
0 & 1 & 0\\
0 & 0 & \mathbb{E}\left[  M_{t+1}M_{t+1}^{\ast}|\mathcal{F}_{t}\right]
\end{array}
\right)  \left(
\begin{array}
[c]{ccc}%
-f_{x}\left(  t\right)  & -f_{y}\left(  t\right)  & -f_{\widetilde{z}}\left(
t\right) \\
b_{x}\left(  t\right)  & b_{y}\left(  t\right)  & b_{\widetilde{z}}\left(
t\right) \\
\sigma_{x}^{\ast}\left(  t\right)  & \sigma_{y}^{\ast}\left(  t\right)  &
\sigma_{\widetilde{z}}^{\ast}\left(  t\right)
\end{array}
\right)  \leq-\alpha\left(
\begin{array}
[c]{ccc}%
1 & 0 & 0\\
0 & 1 & 0\\
0 & 0 & I_{d-1}%
\end{array}
\right)  ; \label{f_c_c_2_monotone_1}%
\end{equation}

when $t=0$,%
\begin{equation}
\left(
\begin{array}
[c]{cc}%
b_{y}\left(  0\right)  & b_{\widetilde{z}}\left(  0\right) \\
\mathbb{E}\left[  M_{1}M_{1}^{\ast}|\mathcal{F}_{0}\right]  \sigma_{y}^{\ast
}\left(  0\right)  & \mathbb{E}\left[  M_{1}M_{1}^{\ast}|\mathcal{F}%
_{0}\right]  \sigma_{\widetilde{z}}^{\ast}\left(  0\right)
\end{array}
\right)  \leq-\alpha\left(
\begin{array}
[c]{cc}%
1 & 0\\
0 & I_{n-1}%
\end{array}
\right)  ; \label{f_c_c_2_monotone_2}%
\end{equation}

when $t=T$,%
\begin{equation}
-g_{x}\left(  T\right)  \leq-\alpha. \label{f_c_c_2_monotone_3}%
\end{equation}

Thus, the coefficients of\thinspace(\ref{f_c_c_2_variation}) satisfy the
monotone condition and there exists a unique solution $\left(  \xi,\eta
,\zeta\right)  $ to\thinspace(\ref{f_c_c_2_variation}). Similar to the proof
of Lemma \ref{f_c_c_2_xyz_approxi}, we have%
\begin{equation}
\mathbb{E}\left[  \sum_{t=0}^{T}\left\vert \xi_{t}\right\vert ^{2}+\sum
_{t=0}^{T}\left\vert \eta_{t}\right\vert ^{2}+\sum_{t=0}^{T-1}\left\vert
\zeta_{t}\widetilde{I}\right\vert ^{2}\right]  \leq C\varepsilon^{2}%
\mathbb{E}\left\vert \Delta v\right\vert ^{2}.
\label{f_c_c_2_variation_approxi}%
\end{equation}

Define%
\[%
\begin{array}
[c]{cr}%
\widetilde{\varphi}_{\mu}\left(  t\right)  = & \int_{0}^{1}\varphi_{\mu
}\left(  t,\bar{X}_{t}+\lambda\left(  X_{t}^{\varepsilon}-\bar{X}_{t}\right)
,\bar{Y}_{t}+\lambda\left(  Y_{t}^{\varepsilon}-\bar{Y}_{t}\right)  ,\bar
{Z}_{t}\widetilde{I}+\lambda\left(  Z_{t}^{\varepsilon}-\bar{Z}_{t}\right)
\widetilde{I},\right. \\
& \left.  \bar{u}_{t}+\lambda\left(  u_{t}^{\varepsilon}-\bar{u}_{t}\right)
\right)  d\lambda.
\end{array}
\]

where $\varphi=b$, $\sigma_{i}$,\ $f$, $l$, $h$ and $\mu=x$, $y$, $z$ and $u$.

\begin{lemma}
\label{f_c_c_2_variation_difference_approxi_0}Under Assumption
\ref{c5_assumption_m_pc} and Assumption \ref{f_c_c_2_control_assumption}, we
have%
\[
\mathbb{E}\left[  \sum_{t=0}^{T}\left\vert \widehat{X}_{t}-\xi_{t}\right\vert
^{2}+\sum_{t=0}^{T}\left\vert \widehat{Y}_{t}-\eta_{t}\right\vert ^{2}%
+\sum_{t=0}^{T-1}\left\vert \left(  \widehat{Z}_{t}-\zeta_{t}\right)
\widetilde{I}\right\vert ^{2}\right]  =o\left(  \varepsilon^{2}\right)  .
\]

\end{lemma}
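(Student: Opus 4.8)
The idea is to run, for the second--order remainders
\[
\widehat X_t^{1}:=\widehat X_t-\xi_t,\qquad \widehat Y_t^{1}:=\widehat Y_t-\eta_t,\qquad \widehat Z_t^{1}:=\widehat Z_t-\zeta_t ,
\]
the same monotonicity--plus--product--rule scheme used in the proof of Lemma \ref{f_c_c_2_xyz_approxi}, now treating the variational equation (\ref{f_c_c_2_variation}) as the first--order Taylor expansion of (\ref{f_c_c_2_difference_1}). First I would subtract (\ref{f_c_c_2_variation}) from (\ref{f_c_c_2_difference_1}) and expand each coefficient increment by the integral mean value form, $\varphi^{\varepsilon}(t)-\overline{\varphi}(t)=\widetilde{\varphi}_x(t)\widehat X_t+\widetilde{\varphi}_y(t)\widehat Y_t+\widehat Z_t\widetilde{I}\,\widetilde{\varphi}_{\widetilde z}(t)+\delta_{ts}\widetilde{\varphi}_u(t)\varepsilon\Delta v$ for $\varphi=b,\sigma$ at time $t$, and likewise for $\varphi=f$ at time $t+1$, with $\widetilde{\varphi}_{\mu}$ as defined just before the lemma. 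Regrouping, $(\widehat X^{1},\widehat Y^{1},\widehat Z^{1})$ solves a linear, fully coupled FBS$\Delta$E of the same structure as (\ref{f_c_c_2_difference_1}), whose coefficients are the averaged Jacobians $\widetilde{\varphi}_{\mu}(t)$ and whose inhomogeneous terms are
\[
\Theta_t^{\varphi}:=[\widetilde{\varphi}_x(t)-\varphi_x(t)]\xi_t+[\widetilde{\varphi}_y(t)-\varphi_y(t)]\eta_t+\zeta_t\widetilde{I}[\widetilde{\varphi}_{\widetilde z}(t)-\varphi_{\widetilde z}(t)]+\delta_{ts}[\widetilde{\varphi}_u(t)-\varphi_u(t)]\varepsilon\Delta v ,
\]
with $\Theta^{b}$ entering the forward drift and $\Theta^{\sigma}$ the diffusion (at time $t$), $\Theta^{f}$ entering the backward equation at time $t+1$ (so with $\delta_{(t+1)s}$, $\xi_{t+1}$, etc.), and $\varphi_{\mu}(t)$ denoting the coefficient at the optimal trajectory as in (\ref{f_c_c_2_variation}).

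The crucial observation is that this averaged linear system still obeys the monotone condition of Assumption \ref{f_c_c_2_control_assumption} with the same $\alpha$: each $\widetilde{\varphi}_{\mu}(t)$ is an average over $\lambda\in[0,1]$ of a partial derivative of $\varphi$ along the segment joining the optimal data to $(X_t^{\varepsilon},Y_t^{\varepsilon},Z_t^{\varepsilon}\widetilde{I},u_t^{\varepsilon})$, and since the monotonicity in Assumption \ref{f_c_c_2_control_assumption} is required uniformly over $u\in\mathcal U$, its differentiated form --- which is precisely the matrix inequality (\ref{f_c_c_2_monotone_1}), together with its counterparts (\ref{f_c_c_2_monotone_2})--(\ref{f_c_c_2_monotone_3}) at $t=0,T$ --- holds at every point of that segment and is therefore preserved by the $\lambda$--average. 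Hence I would repeat the computation in the proof of Lemma \ref{f_c_c_2_xyz_approxi} starting from $0=\mathbb{E}\sum_{t=0}^{T-1}\Delta\langle\widehat X_t^{1},\widehat Y_t^{1}\rangle$ (legitimate since $\widehat X_0^{1}=0$, $\widehat Y_T^{1}=0$). Writing $R:=\mathbb{E}\big[\sum_{t=0}^{T}|\widehat X_t^{1}|^{2}+\sum_{t=0}^{T}|\widehat Y_t^{1}|^{2}+\sum_{t=0}^{T-1}|\widehat Z_t^{1}\widetilde{I}|^{2}\big]$, after cancelling the monotone and boundary pieces the leftover cross terms of $\widehat X^{1},\widehat Y^{1},\widehat Z^{1}M_{t+1}$ against $\Theta_t^{b},\Theta_t^{f},\Theta_t^{\sigma}M_{t+1}$ are bounded below by $\alpha R$ (by the monotone condition, exactly as in (\ref{f_c_c_2_difference_2})) and above by $\tfrac{\alpha}{2}R+C\,\mathbb{E}\sum_t(|\Theta_t^{b}|^{2}+|\Theta_t^{\sigma}\widetilde{I}|^{2}+|\Theta_t^{f}|^{2})$ (by Young's inequality and Proposition 2.4 in \cite{ji-liu}, the latter trading the $M_{t+1}$--weighted norm of $\widehat Z_t^{1}$ for $|\widehat Z_t^{1}\widetilde{I}|$, as in (\ref{f_c_c_2_difference_3})); hence $\tfrac{\alpha}{2}R\le C\,\mathbb{E}\sum_t(|\Theta_t^{b}|^{2}+|\Theta_t^{\sigma}\widetilde{I}|^{2}+|\Theta_t^{f}|^{2})$.

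It then remains to show the right-hand side is $o(\varepsilon^{2})$. Each summand of each $\Theta$ is a product of a factor $\widetilde{\varphi}_{\mu}(t)-\varphi_{\mu}(t)$ --- continuous and uniformly bounded by Assumption \ref{c5_assumption_m_pc}, and, by Lemma \ref{f_c_c_2_xyz_approxi} together with continuity of $\varphi_{\mu}$, tending to $0$ pointwise in $\omega$, hence with $\sup_{\omega}|\widetilde{\varphi}_{\mu}(t)(\omega)-\varphi_{\mu}(t)(\omega)|\to0$ because under Assumption \ref{general_assumption} the sample space reduces, modulo null sets, to finitely many atoms --- times one of $\xi_t,\eta_t,\zeta_t\widetilde{I},\varepsilon\Delta v$, whose $L^{2}$ norm is $O(\varepsilon)$ by (\ref{f_c_c_2_variation_approxi}). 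Thus $\mathbb{E}\big[|\widetilde{\varphi}_{\mu}(t)-\varphi_{\mu}(t)|^{2}|\xi_t|^{2}\big]\le\big(\sup_{\omega}|\widetilde{\varphi}_{\mu}(t)-\varphi_{\mu}(t)|\big)^{2}\mathbb{E}|\xi_t|^{2}=o(1)\cdot O(\varepsilon^{2})$, and summing over the finitely many $t,\mu,\varphi$ gives $o(\varepsilon^{2})$; therefore $R=o(\varepsilon^{2})$, which is exactly the assertion. I expect the main obstacle to be the middle step --- verifying rigorously that the averaged--Jacobian system inherits the monotone condition with the same $\alpha$ (which is precisely why Assumption \ref{f_c_c_2_control_assumption} is imposed uniformly in $u$), and carrying the $Z$--component correctly through the $\thicksim_M$ equivalence via Proposition 2.4 --- since, unlike the partially coupled system of Section 3, no backward induction is available here and all three remainders must be closed simultaneously from the single product-rule identity.
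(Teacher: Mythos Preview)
Your proposal is correct and follows the same overall scheme as the paper --- product rule on $\langle \widehat X_t-\xi_t,\widehat Y_t-\eta_t\rangle$, monotone condition to extract $\alpha R$, Young's inequality plus Proposition~2.4 to absorb half of $R$, and convergence $\widetilde\varphi_\mu\to\varphi_\mu$ to show the residual is $o(\varepsilon^2)$.

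The one genuine difference is in the decomposition. You write the remainder system with the \emph{averaged} Jacobians $\widetilde\varphi_\mu(t)$ as coefficients and inhomogeneities $\Theta^\varphi_t=(\widetilde\varphi_\mu-\varphi_\mu)\cdot(\xi_t,\eta_t,\zeta_t\widetilde I,\varepsilon\Delta v)$; the paper instead uses the \emph{optimal-trajectory} Jacobians $\varphi_\mu(t)$ as coefficients and inhomogeneities $\Lambda_i(t)=(\widetilde\varphi_\mu-\varphi_\mu)\cdot(\widehat X_t,\widehat Y_t,\widehat Z_t\widetilde I,\varepsilon\Delta v)$. Both are algebraically equivalent rearrangements of the same subtraction. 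The paper's choice is slightly more economical: the monotone matrix inequalities (\ref{f_c_c_2_monotone_1})--(\ref{f_c_c_2_monotone_3}) are already stated at the optimal point, so no further argument is needed, and the $O(\varepsilon^2)$ bound on $(\widehat X,\widehat Y,\widehat Z\widetilde I)$ comes directly from Lemma~\ref{f_c_c_2_xyz_approxi}. Your choice costs the extra step of showing the $\lambda$-averaged Jacobians inherit the monotone condition --- which you handle correctly, since the differentiated monotonicity is a pointwise matrix inequality uniform in $u$ and hence preserved by convex averaging --- and then uses (\ref{f_c_c_2_variation_approxi}) for the $O(\varepsilon^2)$ bound on $(\xi,\eta,\zeta\widetilde I)$. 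Either route closes the argument.
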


\begin{proof}
Note that%
\begin{align*}
&  \varphi^{\varepsilon}\left(  t\right)  -\overline{\varphi}\left(  t\right)
\\
&  =\widetilde{\varphi}_{x}\left(  t\right)  \left(  X_{t}^{\varepsilon}%
-\bar{X}_{t}\right)  +\widetilde{\varphi}_{y}\left(  t\right)  \left(
Y_{t}^{\varepsilon}-\bar{Y}_{t}\right)  +\left(  Z_{t}^{\varepsilon}-\bar
{Z}_{t}\right)  \widetilde{I}\widetilde{\varphi}_{\widetilde{z}}\left(
t\right)  +\delta_{ts}\widetilde{\varphi}_{u}\left(  t\right)  \varepsilon
\Delta v.
\end{align*}
Set%
\[
\widetilde{X}_{t}=\widehat{X}_{t}-\xi_{t},\text{ }\widetilde{Y}_{t}%
=\widehat{Y}_{t}-\eta_{t},\text{ }\widetilde{Z}_{t}=\widehat{Z}_{t}-\zeta
_{t}.
\]
Then,%
\begin{equation}
\left\{
\begin{array}
[c]{rcl}%
\Delta\widetilde{X}_{t} & = & b_{x}\left(  t\right)  \widetilde{X}_{t}%
+b_{y}\left(  t\right)  \widetilde{Y}_{t}+\widetilde{Z}_{t}\widetilde{I}%
b_{\widetilde{z}}\left(  t\right)  +\Lambda_{1}\left(  t\right) \\
&  & +\left[  \sigma_{x}\left(  t\right)  \widetilde{X}_{t}+\sigma_{y}\left(
t\right)  \widetilde{Y}_{t}+\widetilde{Z}_{t}\widetilde{I}\sigma
_{\widetilde{z}}\left(  t\right)  +\Lambda_{2}\left(  t\right)  \right]
M_{t+1},\\
\Delta\widetilde{Y}_{t} & = & -f_{x}\left(  t+1\right)  \widetilde{X}%
_{t+1}-f_{y}\left(  t+1\right)  \widetilde{Y}_{t+1}-\widetilde{Z}%
_{t+1}\widetilde{I}f_{\widetilde{z}}\left(  t+1\right) \\
&  & -\Lambda_{3}\left(  t+1\right)  +\widetilde{Z}_{t}M_{t+1},\\
\widetilde{X}_{0} & = & 0,\\
\widetilde{Y}_{T} & = & 0,
\end{array}
\right.  \label{f_c_c_2_variation_difference_approxi}%
\end{equation}
where%
\begin{align*}
\Lambda_{1}\left(  t\right)   &  =\left(  \widetilde{b}_{x}\left(  t\right)
-b_{x}\left(  t\right)  \right)  \widehat{X}_{t}+\left(  \widetilde{b}%
_{y}\left(  t\right)  -b_{y}\left(  t\right)  \right)  \widehat{Y}_{t}\\
&  +\widehat{Z}_{t}\widetilde{I}\left(  \widetilde{b}_{\widetilde{z}}\left(
t\right)  -b_{\widetilde{z}}\left(  t\right)  \right)  +\delta_{ts}\left(
\widetilde{b}_{u}\left(  t\right)  -b_{u}\left(  t\right)  \right)
\varepsilon\Delta v,\\
\Lambda_{2}\left(  t\right)   &  =\left(  \widetilde{\sigma}_{x}\left(
t\right)  -\sigma_{x}\left(  t\right)  \right)  \widehat{X}_{t}+\left(
\widetilde{\sigma}_{y}\left(  t\right)  -\sigma_{y}\left(  t\right)  \right)
\widehat{Y}_{t}\\
&  +\widehat{Z}_{t}\widetilde{I}\left(  \widetilde{\sigma}_{\widetilde{z}%
}\left(  t\right)  -\sigma_{\widetilde{z}}\left(  t\right)  \right)
+\delta_{ts}\left(  \widetilde{\sigma}_{u}\left(  t\right)  -\sigma_{u}\left(
t\right)  \right)  \varepsilon\Delta v,\\
\Lambda_{3}\left(  t\right)   &  =-\left(  \widetilde{f}_{x}\left(  t\right)
-f_{x}\left(  t\right)  \right)  \widehat{X}_{t}-\left(  \widetilde{f}%
_{y}\left(  t\right)  -f_{y}\left(  t\right)  \right)  \widehat{Y}_{t}\\
&  -\widehat{Z}_{t}\widetilde{I}\left(  \widetilde{f}_{\widetilde{z}}\left(
t\right)  -f_{\widetilde{z}}\left(  t\right)  \right)  -\delta_{ts}\left(
\widetilde{f}_{u}\left(  t\right)  -f_{u}\left(  t\right)  \right)
\varepsilon\Delta v.
\end{align*}
According to\thinspace(\ref{f_c_c_2_variation_difference_approxi}),%
\[%
\begin{array}
[c]{rl}%
0= & \mathbb{E}\left\langle \widetilde{X}_{T},\widetilde{Y}_{T}\right\rangle
-\mathbb{E}\left\langle \widetilde{X}_{0},\widetilde{Y}_{0}\right\rangle \\
= & \mathbb{E}\sum_{t=0}^{T-1}\Delta\left\langle \widetilde{X}_{t}%
,\widetilde{Y}_{t}\right\rangle \\
= & \mathbb{E}\sum_{t=0}^{T}\left[  \left\langle \widetilde{X}_{t}%
,-\widetilde{\lambda}_{t}f_{\lambda}\left(  t\right)  \right\rangle
+\left\langle \widetilde{Y}_{t},\widetilde{\lambda}_{t}b_{\lambda}\left(
t\right)  \right\rangle +\left\langle \widetilde{Z}_{t},\widetilde{\lambda
}_{t}\sigma_{\lambda}\left(  t\right)  M_{t+1}M_{t+1}^{\ast}\right\rangle
\right] \\
& +\mathbb{E}\sum_{t=0}^{T}\left[  \left\langle \widetilde{X}_{t},-\Lambda
_{3}\left(  t\right)  \right\rangle +\left\langle \widetilde{Y}_{t}%
,\Lambda_{1}\left(  t\right)  \right\rangle +\left\langle \widetilde{Z}%
_{t}M_{t+1},\Lambda_{2}\left(  t\right)  M_{t+1}\right\rangle \right]  ,
\end{array}
\]
where%
\begin{align*}
\widetilde{\lambda}_{t}  &  =\left(  \widetilde{X}_{t},\widetilde{Y}%
_{t},\widetilde{Z}_{t}\widetilde{I}\right)  ,\\
b_{\lambda}\left(  t\right)   &  =\left(  b_{x}\left(  t\right)  ,b_{y}\left(
t\right)  ,b_{\widetilde{z}}\left(  t\right)  \right)  ,\\
\sigma_{\lambda}\left(  t\right)   &  =\left(  \sigma_{x}\left(  t\right)
,\sigma_{y}\left(  t\right)  ,\sigma_{\widetilde{z}}\left(  t\right)  \right)
,\\
f_{\lambda}\left(  t\right)   &  =\left(  f_{x}\left(  t\right)  ,f_{y}\left(
t\right)  ,f_{\widetilde{z}}\left(  t\right)  \right)  .
\end{align*}
Combining (\ref{f_c_c_2_monotone_1}), (\ref{f_c_c_2_monotone_2})
and\ (\ref{f_c_c_2_monotone_3}), we have%
\begin{equation}%
\begin{array}
[c]{cl}
& \mathbb{E}\sum_{t=0}^{T}\left[  \left\langle \widetilde{X}_{t},-\Lambda
_{3}\left(  t\right)  \right\rangle +\left\langle \widetilde{Y}_{t}%
,\Lambda_{1}\left(  t\right)  \right\rangle +\left\langle \widetilde{Z}%
_{t}M_{t+1},\Lambda_{2}\left(  t\right)  M_{t+1}\right\rangle \right] \\
\geq & \alpha\mathbb{E}\left[  \sum_{t=0}^{T}\left\vert \widetilde{X}%
_{t}\right\vert ^{2}+\sum_{t=0}^{T}\left\vert \widetilde{Y}_{t}\right\vert
^{2}+\sum_{t=0}^{T-1}\left\vert \widetilde{Z}_{t}\widetilde{I}\right\vert
^{2}\right]  .
\end{array}
\label{f_c_c_2_variation_difference_1}%
\end{equation}
Note that%
\[%
\begin{array}
[c]{cl}
& \mathbb{E}\left\langle \widetilde{X}_{t},-\Lambda_{3}\left(  t\right)
\right\rangle \\
= & \mathbb{E}\left\langle \widetilde{X}_{t},\left(  \widetilde{f}_{x}\left(
t\right)  -f_{x}\left(  t\right)  \right)  \widehat{X}_{t}\right\rangle
+\mathbb{E}\left\langle \widetilde{X}_{t},\left(  \widetilde{f}_{y}\left(
t\right)  -f_{y}\left(  t\right)  \right)  \widehat{Y}_{t}\right\rangle \\
& +\mathbb{E}\left\langle \widetilde{X}_{t},\widehat{Z}_{t}\widetilde{I}%
\left(  \widetilde{f}_{\widetilde{z}}\left(  t\right)  -f_{\widetilde{z}%
}\left(  t\right)  \right)  \right\rangle +\mathbb{E}\left\langle
\widetilde{X}_{t},\delta_{ts}\left(  \widetilde{f}_{u}\left(  t\right)
-f_{u}\left(  t\right)  \right)  \varepsilon\Delta v\right\rangle \\
\leq & \frac{\alpha}{2}\mathbb{E}\left\vert \widetilde{X}_{t}\right\vert
^{2}+\frac{2}{\alpha}\mathbb{E}\left[  \left\Vert \widetilde{f}_{x}\left(
t\right)  -f_{x}\left(  t\right)  \right\Vert ^{2}\left\vert \widehat{X}%
_{t}\right\vert ^{2}+\left\Vert \widetilde{f}_{y}\left(  t\right)
-f_{y}\left(  t\right)  \right\Vert ^{2}\left\vert \widehat{Y}_{t}\right\vert
^{2}\right] \\
& +\frac{2}{\alpha}\mathbb{E}\left[  \left\Vert \widetilde{f}_{\widetilde{z}%
}\left(  t\right)  -f_{\widetilde{z}}\left(  t\right)  \right\Vert
^{2}\left\vert \widehat{Z}_{t}\widetilde{I}\right\vert ^{2}+\delta
_{ts}\varepsilon^{2}\left\Vert \widetilde{f}_{u}\left(  t\right)
-f_{u}\left(  t\right)  \right\Vert ^{2}\left\vert \Delta v\right\vert
^{2}\right]  .
\end{array}
\]
When $\varepsilon\rightarrow0$, $\left\Vert \widetilde{f}_{\mu}\left(
t\right)  -f_{\mu}\left(  t\right)  \right\Vert \rightarrow0$\thinspace for
$\mu=x$, $y$, $\widetilde{z}$ and $u$. Then, by Lemma \thinspace
\ref{f_c_c_2_xyz_approxi},%
\[
\mathbb{E}\left\langle \widetilde{X}_{t},-\Lambda_{3}\left(  t\right)
\right\rangle \leq\frac{\alpha}{2}\mathbb{E}\left\vert \widetilde{X}%
_{t}\right\vert ^{2}+o\left(  \varepsilon^{2}\right)  .
\]
Similar results hold for the other terms in
(\ref{f_c_c_2_variation_difference_1}). Finally, we have%
\[
\mathbb{E}\left[  \sum_{t=0}^{T}\left\vert \widetilde{X}_{t}\right\vert
^{2}+\sum_{t=0}^{T}\left\vert \widetilde{Y}_{t}\right\vert ^{2}+\sum
_{t=0}^{T-1}\left\vert \widetilde{Z}_{t}\widetilde{I}\right\vert ^{2}\right]
\leq o\left(  \varepsilon^{2}\right)  .
\]
This completes the proof.
\end{proof}

By Lemma \ref{f_c_c_2_variation_difference_approxi_0}, we obtain%
\begin{align*}
&
\begin{array}
[c]{cc}
& J\left(  u^{\varepsilon}\left(  \cdot\right)  \right)  -J\left(  \bar
{u}\left(  \cdot\right)  \right)
\end{array}
\\
&
\begin{array}
[c]{cl}%
= & \mathbb{E}\sum_{t=0}^{T-1}\left[  l_{x}\left(  t\right)  \xi_{t}%
+l_{y}\left(  t\right)  \eta_{t}+\zeta_{t}\widetilde{I}l_{\widetilde{z}%
}\left(  t\right)  +\delta_{ts}l_{u}\left(  s\right)  \varepsilon\Delta
v\right] \\
& +\mathbb{E}\left[  h_{x}\left(  \bar{X}_{T}\right)  \xi_{T}\right]
+o\left(  \varepsilon\right)  .
\end{array}
\end{align*}

Introduce the following adjoint equation:%
\begin{equation}
\left\{
\begin{array}
[c]{rcl}%
\Delta p_{t} & = & -b_{x}\left(  t+1\right)  p_{t+1}-\sigma_{x}\left(
t+1\right)  \mathbb{E}\left[  M_{t+2}M_{t+2}^{\ast}|\mathcal{F}_{t+1}\right]
q_{t+1}^{\ast}\\
&  & +f_{x}\left(  t+1\right)  k_{t+1}+l_{x}\left(  t+1\right)  +q_{t}%
M_{t+1},\\
\Delta k_{t} & = & f_{y}\left(  t\right)  k_{t}-b_{y}\left(  t\right)
p_{t}-\sigma_{y}\left(  t\right)  \mathbb{E}\left[  M_{t+1}M_{t+1}^{\ast
}|\mathcal{F}_{t}\right]  q_{t}^{\ast}+l_{y}\left(  t\right) \\
&  & +\left[  \left(  f_{\widetilde{z}}^{\ast}\left(  t\right)  k_{t}%
-b_{\widetilde{z}}^{\ast}\left(  t\right)  p_{t}+l_{\widetilde{z}}^{\ast
}\left(  t\right)  \right)  \widetilde{I}^{\ast}\left(  \mathbb{E}\left[
M_{t+1}M_{t+1}^{\ast}|\mathcal{F}_{t}\right]  \right)  ^{\dag}\right]
M_{t+1}\\
&  & -\left(  q_{t}\mathbb{E}\left[  M_{t+1}M_{t+1}^{\ast}|\mathcal{F}%
_{t}\right]  \sigma_{\widetilde{z}}^{\ast}\left(  t\right)  \right)
\widetilde{I}^{\ast}\left(  \mathbb{E}\left[  M_{t+1}M_{t+1}^{\ast
}|\mathcal{F}_{t}\right]  \right)  ^{\dag}M_{t+1},\\
p_{T} & = & -h_{x}\left(  \bar{X}_{T}\right)  ,\\
k_{0} & = & 0.
\end{array}
\right.  \label{f_c_c_2_adjoint_eq}%
\end{equation}

Define the Hamiltonian function as follows:%
\[%
\begin{array}
[c]{r}%
H\left(  \omega,t,u,x,y,z,p,q,k\right)  =b\left(  \omega,t,x,y,z,u\right)
p+\sigma\left(  \omega,t,x,y,z,u\right)  \mathbb{E}\left[  M_{t+1}%
M_{t+1}^{\ast}|\mathcal{F}_{t}\right]  q^{\ast}\\
-f\left(  \omega,t,x,y,z,u\right)  k-l\left(  \omega,t,x,y,z,u\right)  .
\end{array}
\]

\begin{theorem}
Suppose that Assumption \ref{c5_assumption_m_pc} and Assumption
\ref{f_c_c_2_control_assumption} hold. Let $\bar{u}$ be an optimal control
for\ (\ref{f_c_c_2_state_eq})-(\ref{f_c_c_2_cost_eq}), $\left(  \bar{X}%
,\bar{Y},\bar{Z}\right)  $\ be the corresponding optimal trajectory and
$\left(  p,q,k\right)  $ be the solution to the adjoint
equation\ (\ref{f_c_c_2_adjoint_eq}). Then, for any $t\in\left\{
0,1,...,T\right\}  $, $\omega\in\Omega$\ and\ $v\in U_{t}$, we have%
\begin{equation}
\left\langle H_{u}\left(  \omega,t,\bar{u}_{t},\bar{X}_{t},\bar{Y}_{t},\bar
{Z}_{t},p_{t},q_{t},k_{t}\right)  ,v-\bar{u}_{t}\left(  \omega\right)
\right\rangle \leq0. \label{f_c_c_2_MP_result}%
\end{equation}

\end{theorem}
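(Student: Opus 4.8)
The plan is to repeat the scheme of Section~3, now exploiting the monotone conditions (\ref{f_c_c_2_monotone_1})--(\ref{f_c_c_2_monotone_3}) in place of the decoupling available in the partially coupled case. By Lemma~\ref{f_c_c_2_variation_difference_approxi_0} and the smoothness and boundedness in Assumption~\ref{c5_assumption_m_pc}, a first-order expansion of $l$ and $h$ along $(\bar X,\bar Y,\bar Z,\bar u)$ gives, as already displayed above,
\[
J(u^{\varepsilon})-J(\bar u)=\mathbb{E}\sum_{t=0}^{T-1}\Big[l_{x}(t)\xi_{t}+l_{y}(t)\eta_{t}+\zeta_{t}\widetilde{I}l_{\widetilde{z}}(t)+\delta_{ts}l_{u}(s)\varepsilon\Delta v\Big]+\mathbb{E}\big[h_{x}(\bar X_{T})\xi_{T}\big]+o(\varepsilon),
\]
so the task reduces to rewriting the linear functional of $(\xi,\eta,\zeta,\xi_{T})$ appearing here in terms of $\Delta v$ and the adjoint triple $(p,q,k)$. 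First I would note that the adjoint system (\ref{f_c_c_2_adjoint_eq}) is itself a fully coupled FBS$\Delta$E ($k$ forward with $k_{0}=0$, $(p,q)$ backward with $p_{T}=-h_{x}(\bar X_{T})$, coupled through $k_{t+1},p_{t},q_{t}$), whose coefficients are, up to transposition, those of the variational equation (\ref{f_c_c_2_variation}) with forward and backward parts interchanged; hence the monotone conditions (\ref{f_c_c_2_monotone_1})--(\ref{f_c_c_2_monotone_3}) again yield a monotonicity condition for (\ref{f_c_c_2_adjoint_eq}), so by the solvability results in \cite{ji-liu} the triple $(p,q,k)$ exists and is unique, with the usual $L^{2}$ bounds (proved exactly as in Lemma~\ref{f_c_c_2_xyz_approxi}).

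The core computation is the duality identity. Applying the product rule $\Delta\langle X_{t},Y_{t}\rangle=\langle X_{t+1},\Delta Y_{t}\rangle+\langle\Delta X_{t},Y_{t}\rangle$ to $\langle\xi_{t},p_{t}\rangle+\langle\eta_{t},k_{t}\rangle$ and inserting the dynamics (\ref{f_c_c_2_variation}) and (\ref{f_c_c_2_adjoint_eq}), one takes $\mathbb{E}$: all genuine martingale-difference increments vanish, and every bilinear term carrying $M_{t+1}M_{t+1}^{\ast}$ collapses via the identity
\[
\mathbb{E}[M_{t+1}M_{t+1}^{\ast}|\mathcal{F}_{t}]\big(\mathbb{E}[M_{t+1}M_{t+1}^{\ast}|\mathcal{F}_{t}]\big)^{\dag}=I_{d}-\tfrac{1}{d}\mathbf{1}_{d\times d},
\]
together with $(I_{d}-\tfrac{1}{d}\mathbf{1}_{d\times d})\widetilde{I}=\widetilde{I}$ (the columns of $\widetilde I$ sum to zero). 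This is exactly where the formulation (\ref{bsde_form1}) of the BS$\Delta$E and the chosen product rule pay off: the $b_{y},b_{\widetilde z},\sigma_{y},\sigma_{\widetilde z},f_{y},f_{\widetilde z}$ cross-terms built into (\ref{f_c_c_2_adjoint_eq}) are precisely those that make the contributions of $k$ in $\Delta\langle\eta_{t},k_{t}\rangle$ cancel the matching contributions in $\Delta\langle\xi_{t},p_{t}\rangle$. Summing over $t=0,\dots,T-1$, telescoping, and using $\xi_{0}=0$, $k_{0}=0$, $p_{T}=-h_{x}(\bar X_{T})$, $\eta_{T}=0$ (and collecting the perturbation terms coming from $b_{u},\sigma_{u}$ at $t=s$ and from $f_{u}$ at $t+1=s$ into a single inner product at time $s$), one arrives, as in (\ref{dual-relation}), at
\[
\mathbb{E}\sum_{t=0}^{T-1}\Big[l_{x}(t)\xi_{t}+l_{y}(t)\eta_{t}+\zeta_{t}\widetilde{I}l_{\widetilde{z}}(t)\Big]+\mathbb{E}\big[h_{x}(\bar X_{T})\xi_{T}\big]=-\varepsilon\,\mathbb{E}\big\langle b_{u}^{\ast}(s)p_{s}+\sigma_{u}(s)\mathbb{E}[M_{s+1}M_{s+1}^{\ast}|\mathcal{F}_{s}]q_{s}^{\ast}-f_{u}^{\ast}(s)k_{s},\Delta v\big\rangle .
\]

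Combining this with the expansion of $J(u^{\varepsilon})-J(\bar u)$, dividing by $\varepsilon$, sending $\varepsilon\to0$, and using optimality in the form $\liminf_{\varepsilon\to0}\varepsilon^{-1}\big(J(u^{\varepsilon})-J(\bar u)\big)\ge0$, we obtain $\mathbb{E}\big\langle H_{u}(\omega,s,\bar u_{s},\bar X_{s},\bar Y_{s},\bar Z_{s},p_{s},q_{s},k_{s}),\Delta v\big\rangle\le0$ for every $\Delta v\in L(\mathcal{F}_{s};\mathbb{R}^{r})$ with $\bar u_{s}+\Delta v\in U_{s}$; taking $\Delta v=(v-\bar u_{s})\mathbf{1}_{A}$ for $A\in\mathcal{F}_{s}$ and $v\in U_{s}$, and invoking Assumption~\ref{general_assumption} (every path of $W$, hence every $\omega$, has positive probability), this localizes to the pointwise inequality (\ref{f_c_c_2_MP_result}). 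Since $s\in\{0,1,\dots,T\}$ is arbitrary, the theorem follows. I expect the main obstacle to be the duality computation itself: verifying that the $\sigma_{\widetilde z}$-- and $f_{\widetilde z}$--cross-terms in (\ref{f_c_c_2_adjoint_eq}), together with the pseudoinverse algebra and the $\sim_{M}$ equivalence, produce exactly the cancellation claimed; a secondary point to check carefully is that the monotonicity of (\ref{f_c_c_2_adjoint_eq}) really follows from (\ref{f_c_c_2_monotone_1})--(\ref{f_c_c_2_monotone_3}), so that $(p,q,k)$ is well defined.
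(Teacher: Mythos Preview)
Your proposal is correct and follows essentially the same route as the paper's proof: the same product rule applied to $\langle\xi_{t},p_{t}\rangle+\langle\eta_{t},k_{t}\rangle$, the same pseudoinverse identity $\mathbb{E}[M_{t+1}M_{t+1}^{\ast}|\mathcal{F}_{t}](\mathbb{E}[M_{t+1}M_{t+1}^{\ast}|\mathcal{F}_{t}])^{\dag}\widetilde{I}=\widetilde{I}$ to collapse the $\zeta$-terms, the same telescoping with boundary data $\xi_{0}=k_{0}=0$, $\eta_{T}=0$, $p_{T}=-h_{x}(\bar X_{T})$, and the same passage from the expectation inequality to the pointwise one via arbitrariness of $s$ and $\Delta v$. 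The only notable addition is that you pause to argue well-posedness of the adjoint system (\ref{f_c_c_2_adjoint_eq}) by transposed monotonicity, which the paper takes for granted in Section~4; this is a legitimate point to flag, and your sketch of why monotonicity transfers is the right idea.
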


\begin{proof}
From the expression of $\xi_{t}$, $p_{t}$ for $t\in\left\{
0,1,...,T-1\right\}  $, we have%
\begin{align*}
&
\begin{array}
[c]{cl}
& \Delta\left\langle \xi_{t},p_{t}\right\rangle \\
= & \left\langle \xi_{t+1},\Delta p_{t}\right\rangle +\left\langle \Delta
\xi_{t},p_{t}\right\rangle
\end{array}
\\
&
\begin{array}
[c]{cl}%
= & \left\langle \xi_{t+1},-b_{x}\left(  t+1\right)  p_{t+1}-\sigma_{x}\left(
t+1\right)  \mathbb{E}\left[  M_{t+2}M_{t+2}^{\ast}|\mathcal{F}_{t+1}\right]
q_{t+1}^{\ast}+f_{x}\left(  t+1\right)  k_{t+1}+l_{x}\left(  t+1\right)
\right\rangle \\
& +\left\langle \left[  \sigma_{x}\left(  t\right)  \xi_{t}+\sigma_{y}\left(
t\right)  \eta_{t}+\zeta_{t}\widetilde{I}\sigma_{\widetilde{z}}\left(
t\right)  +\delta_{ts}\varepsilon\left(  \Delta v\right)  ^{\ast}\sigma
_{u}\left(  t\right)  \right]  M_{t+1},q_{t}M_{t+1}\right\rangle \\
& +\left\langle b_{x}\left(  t\right)  \xi_{t}+b_{y}\left(  t\right)  \eta
_{t}+\zeta_{t}\widetilde{I}b_{\widetilde{z}}\left(  t\right)  +\delta
_{ts}b_{u}\left(  t\right)  \varepsilon\Delta v,p_{t}\right\rangle +\Phi_{t},
\end{array}
\end{align*}
where%
\[%
\begin{array}
[c]{ccl}%
\Phi_{t} & = & \left\langle \xi_{t}+b_{x}\left(  t\right)  \xi_{t}%
+b_{y}\left(  t\right)  \eta_{t}+\zeta_{t}\widetilde{I}b_{\widetilde{z}%
}\left(  t\right)  +\delta_{ts}b_{u}\left(  t\right)  \varepsilon\Delta
v,q_{t}M_{t+1}\right\rangle \\
&  & +\left\langle \left[  \sigma_{x}\left(  t\right)  \xi_{t}+\sigma
_{y}\left(  t\right)  \eta_{t}+\zeta_{t}\widetilde{I}\sigma_{\widetilde{z}%
}\left(  t\right)  +\delta_{ts}\varepsilon\left(  \Delta v\right)  ^{\ast
}\sigma_{u}\left(  t\right)  \right]  M_{t+1},p_{t}\right\rangle
\end{array}
\]
We have $\mathbb{E}\left[  \Phi_{t}\right]  =0$. Besides,%
\begin{align*}
&  \mathbb{E}\left[  \left(  \sigma_{x}\left(  t\right)  \xi_{t}+\sigma
_{y}\left(  t\right)  \eta_{t}+\zeta_{t}\widetilde{I}\sigma_{\widetilde{z}%
}\left(  t\right)  +\delta_{ts}\varepsilon\left(  \Delta v\right)  ^{\ast
}\sigma_{u}\left(  t\right)  \right)  M_{t+1}q_{t}M_{t+1}\right] \\
&  =\mathbb{E}\left[  \left(  \xi_{t}\sigma_{x}\left(  t\right)  +\eta
_{t}\sigma_{y}\left(  t\right)  +\zeta_{t}\widetilde{I}\sigma_{\widetilde{z}%
}\left(  t\right)  +\delta_{ts}\varepsilon\left(  \Delta v\right)  ^{\ast
}\sigma_{u}\left(  t\right)  \right)  \mathbb{E}\left[  M_{t+1}M_{t+1}^{\ast
}|\mathcal{F}_{t}\right]  q_{t}^{\ast}\right]  .
\end{align*}
Similarly,%
\begin{align*}
&
\begin{array}
[c]{cl}
& \Delta\left\langle \eta_{t},k_{t}\right\rangle \\
= & \left\langle \Delta\eta_{t},k_{t+1}\right\rangle +\left\langle \eta
_{t},\Delta k_{t}\right\rangle
\end{array}
\\
&
\begin{array}
[c]{cl}%
= & \left\langle -f_{x}\left(  t+1\right)  \xi_{t+1}-f_{y}\left(  t+1\right)
\eta_{t+1}-\zeta_{t+1}\widetilde{I}f_{\widetilde{z}}\left(  t+1\right)
-\delta_{\left(  t+1\right)  s}f_{u}\left(  t+1\right)  \varepsilon\Delta
v,k_{t+1}\right\rangle \\
& +\left\langle \zeta_{t}M_{t+1},\left(  f_{\widetilde{z}}^{\ast}\left(
t\right)  k_{t}-b_{\widetilde{z}}^{\ast}\left(  t\right)  p_{t}%
+l_{\widetilde{z}}^{\ast}\left(  t\right)  \right)  \widetilde{I}^{\ast
}\left(  \mathbb{E}\left[  M_{t+1}M_{t+1}^{\ast}|\mathcal{F}_{t}\right]
\right)  ^{\dag}M_{t+1}\right\rangle \\
& -\left\langle \zeta_{t}M_{t+1},q_{t}\mathbb{E}\left[  M_{t+1}M_{t+1}^{\ast
}|\mathcal{F}_{t}\right]  \sigma_{\widetilde{z}}^{\ast}\left(  t\right)
\widetilde{I}^{\ast}\left(  \mathbb{E}\left[  M_{t+1}M_{t+1}^{\ast
}|\mathcal{F}_{t}\right]  \right)  ^{\dag}M_{t+1}\right\rangle \\
& +\left\langle \eta_{t},f_{y}\left(  t\right)  k_{t}-b_{y}\left(  t\right)
p_{t}-\sigma_{y}\left(  t\right)  \mathbb{E}\left[  M_{t+1}M_{t+1}^{\ast
}|\mathcal{F}_{t}\right]  q_{t}^{\ast}+l_{y}\left(  t\right)  \right\rangle
+\Psi_{t},
\end{array}
\end{align*}
where%
\[%
\begin{array}
[c]{ccl}%
\Psi_{t} & = & \left\langle \zeta_{t}M_{t+1},k_{t}+f_{y}\left(  t\right)
k_{t}-b_{y}\left(  t\right)  p_{t}-\sigma_{y}\left(  t\right)  \mathbb{E}%
\left[  M_{t+1}M_{t+1}^{\ast}|\mathcal{F}_{t}\right]  q_{t}^{\ast}%
+l_{y}\left(  t\right)  \right\rangle \\
&  & +\left\langle \eta_{t},\left(  f_{\widetilde{z}}^{\ast}\left(  t\right)
k_{t}-b_{\widetilde{z}}^{\ast}\left(  t\right)  p_{t}+l_{\widetilde{z}}^{\ast
}\left(  t\right)  \right)  \widetilde{I}^{\ast}\left(  \mathbb{E}\left[
M_{t+1}M_{t+1}^{\ast}|\mathcal{F}_{t}\right]  \right)  ^{\dag}M_{t+1}%
\right\rangle .\\
&  & -\left\langle \eta_{t},q_{t}\mathbb{E}\left[  M_{t+1}M_{t+1}^{\ast
}|\mathcal{F}_{t}\right]  \sigma_{\widetilde{z}}^{\ast}\left(  t\right)
\widetilde{I}^{\ast}\left(  \mathbb{E}\left[  M_{t+1}M_{t+1}^{\ast
}|\mathcal{F}_{t}\right]  \right)  ^{\dag}M_{t+1}\right\rangle .
\end{array}
\]
Furthermore,%
\[%
\begin{array}
[c]{cl}
& \mathbb{E}\left[  \zeta_{t}M_{t+1}\left(  f_{\widetilde{z}}^{\ast}\left(
t\right)  k_{t}-b_{\widetilde{z}}^{\ast}\left(  t\right)  p_{t}%
+l_{\widetilde{z}}^{\ast}\left(  t\right)  \right)  \widetilde{I}^{\ast
}\left(  \mathbb{E}\left[  M_{t+1}M_{t+1}^{\ast}|\mathcal{F}_{t}\right]
\right)  ^{\dag}M_{t+1}\right] \\
& -\mathbb{E}\left[  \zeta_{t}M_{t+1}q_{t}\mathbb{E}\left[  M_{t+1}%
M_{t+1}^{\ast}|\mathcal{F}_{t}\right]  \sigma_{\widetilde{z}}^{\ast}\left(
t\right)  \widetilde{I}^{\ast}\left(  \mathbb{E}\left[  M_{t+1}M_{t+1}^{\ast
}|\mathcal{F}_{t}\right]  \right)  ^{\dag}M_{t+1}\right] \\
= & \mathbb{E}\left[  \zeta_{t}M_{t+1}M_{t+1}^{\ast}\left(  \mathbb{E}\left[
M_{t+1}M_{t+1}^{\ast}|\mathcal{F}_{t}\right]  \right)  ^{\dag}\widetilde{I}%
\left(  f_{\widetilde{z}}\left(  t\right)  k_{t}-b_{\widetilde{z}}\left(
t\right)  p_{t}+l_{\widetilde{z}}\left(  t\right)  \right)  \right] \\
& -\mathbb{E}\left[  \zeta_{t}M_{t+1}M_{t+1}^{\ast}\left(  \mathbb{E}\left[
M_{t+1}M_{t+1}^{\ast}|\mathcal{F}_{t}\right]  \right)  ^{\dag}\widetilde{I}%
\sigma_{\widetilde{z}}\left(  t\right)  \mathbb{E}\left[  M_{t+1}M_{t+1}%
^{\ast}|\mathcal{F}_{t}\right]  q_{t}^{\ast}\right] \\
= & \mathbb{E}\left[  \zeta_{t}\widetilde{I}\left(  f_{\widetilde{z}}\left(
t\right)  k_{t}-b_{\widetilde{z}}\left(  t\right)  p_{t}+l_{\widetilde{z}%
}\left(  t\right)  -\sigma_{\widetilde{z}}\left(  t\right)  \mathbb{E}\left[
M_{t+1}M_{t+1}^{\ast}|\mathcal{F}_{t}\right]  q_{t}^{\ast}\right)  \right]  .
\end{array}
\]

Then, we obtain%
\[%
\begin{array}
[c]{cl}
& \mathbb{E}\left[  \Delta\left(  \xi_{t}p_{t}+\eta_{t}k_{t}\right)  \right]
\\
= & \mathbb{E}\left[  -\xi_{t+1}b_{x}\left(  t+1\right)  p_{t+1}+\xi_{t}%
b_{x}\left(  t\right)  p_{t}\right. \\
& -\xi_{t+1}\sigma_{x}\left(  t+1\right)  \mathbb{E}\left[  M_{t+2}%
M_{t+2}^{\ast}|\mathcal{F}_{t+1}\right]  q_{t+1}^{\ast}+\left\langle \xi
_{t},\sigma_{x}\left(  t\right)  \mathbb{E}\left[  M_{t+1}M_{t+1}^{\ast
}|\mathcal{F}_{t}\right]  q_{t}^{\ast}\right\rangle \\
& -\eta_{t+1}f_{y}\left(  t+1\right)  k_{t+1}+\eta_{t}f_{y}\left(  t\right)
k_{t}\\
& -\zeta_{t+1}\widetilde{I}f_{\widetilde{z}}\left(  t+1\right)  k_{t+1}%
+\zeta_{t}\widetilde{I}f_{\widetilde{z}}\left(  t\right)  k_{t}\\
& +\xi_{t+1}l_{x}\left(  t+1\right)  +\eta_{t}l_{y}\left(  t\right)
+\zeta_{t}\widetilde{I}l_{\widetilde{z}}\left(  t\right) \\
& +\varepsilon\delta_{ts}\left\langle b_{u}^{\ast}\left(  t\right)
p_{t},\Delta v\right\rangle +\varepsilon\delta_{ts}\left\langle \sigma
_{u}\left(  t\right)  \mathbb{E}\left[  M_{t+1}M_{t+1}^{\ast}|\mathcal{F}%
_{t}\right]  q_{t}^{\ast},\Delta v\right\rangle \\
& -\varepsilon\delta_{\left(  t+1\right)  s}\left\langle f_{u}^{\ast}\left(
t+1\right)  k_{t+1},\Delta v\right\rangle .
\end{array}
\]
Therefore,%
\[%
\begin{array}
[c]{cl}
& -\mathbb{E}\left[  h_{x}\left(  \bar{X}_{T}\right)  \xi_{T}\right] \\
= & \mathbb{E}\left[  \left\langle \xi_{T},p_{T}\right\rangle +\left\langle
\eta_{T},k_{T}\right\rangle -\left\langle \xi_{0},p_{0}\right\rangle
-\left\langle \eta_{0},k_{0}\right\rangle \right] \\
= & \sum_{t=0}^{T-1}\mathbb{E}\Delta\left(  \left\langle \xi_{t}%
,p_{t}\right\rangle +\left\langle \eta_{t},k_{t}\right\rangle \right) \\
= & \mathbb{E}\left[  b_{x}\left(  0\right)  \xi_{0}p_{0}+\xi_{0}\sigma
_{x}\left(  0\right)  \mathbb{E}\left[  M_{1}M_{1}^{\ast}|\mathcal{F}%
_{0}\right]  q_{0}^{\ast}+\eta_{0}f_{y}\left(  0\right)  k_{0}+\zeta
_{0}\widetilde{I}f_{\widetilde{z}}\left(  0\right)  k_{0}\right] \\
& +\sum_{t=0}^{T-1}\mathbb{E}\left[  l_{x}\left(  t\right)  \xi_{t}%
+l_{y}\left(  t\right)  \eta_{t}+\zeta_{t}\widetilde{I}l_{\widetilde{z}%
}\left(  t\right)  \right] \\
& +\sum_{t=0}^{T}\delta_{ts}\varepsilon\mathbb{E}\left[  \left\langle
b_{u}^{\ast}\left(  t\right)  p_{t},\Delta v\right\rangle +\left\langle
\sigma_{u}\left(  t\right)  \mathbb{E}\left[  M_{t+1}M_{t+1}^{\ast
}|\mathcal{F}_{t}\right]  q_{t}^{\ast},\Delta v\right\rangle -\left\langle
f_{u}^{\ast}\left(  t\right)  k_{t},\Delta v\right\rangle \right]  .
\end{array}
\]
Notice that $\xi_{0}=0$, $k_{0}=0$. So%
\[%
\begin{array}
[c]{cl}
& \mathbb{E}\sum_{t=0}^{T-1}\left[  l_{x}\left(  t\right)  \xi_{t}%
+l_{y}\left(  t\right)  \eta_{t}+\zeta_{t}\widetilde{I}l_{\widetilde{z}%
}\left(  t\right)  \right]  +\mathbb{E}\left[  h_{x}\left(  \bar{X}%
_{T}\right)  \xi_{T}\right] \\
= & -\varepsilon\mathbb{E}\left[  \left\langle b_{u}^{\ast}\left(  s\right)
p_{s}+\sigma_{u}\left(  s\right)  \mathbb{E}\left[  M_{s+1}M_{s+1}^{\ast
}|\mathcal{F}_{t}\right]  q_{s}^{\ast}-f_{u}^{\ast}\left(  s\right)
k_{s},\Delta v\right\rangle \right]  .
\end{array}
\]
Since $\lim_{\varepsilon\rightarrow0}\frac{1}{\varepsilon}\left[  J\left(
u^{\varepsilon}\left(  \cdot\right)  \right)  -J\left(  \bar{u}\left(
\cdot\right)  \right)  \right]  \geq0$, we obtain%
\[
\mathbb{E}\left[  \left\langle b_{u}^{\ast}\left(  s\right)  p_{s}+\sigma
_{u}\left(  s\right)  \mathbb{E}\left[  M_{s+1}M_{s+1}^{\ast}|\mathcal{F}%
_{t}\right]  q_{s}^{\ast}-f_{u}^{\ast}\left(  s\right)  k_{s}-l_{u}^{\ast
}\left(  s\right)  ,\Delta v\right\rangle \right]  \leq0.
\]
Then, (\ref{f_c_c_2_MP_result}) holds due to that $s$ is taking arbitrarily.
This completes the proof.
\end{proof}

\end{document}